%%%%%%%%%%%%%%%%%%%%%%%%%%%%%%%%%%%%%%%%%%%
%\documentclass[reqno,10pt,centertags, draft]{amsart}
\documentclass[reqno,10pt,centertags]{amsart}
\usepackage{amsmath,amsthm,amscd,amssymb,latexsym,upref,mathrsfs}
\usepackage{esint}
%%%%%%%%%%%%%%%%%%%%%%%%%%%%%%%%%%%%%%%%%%%

%%%%%%%%%%%%%%%%%%%%%%%%%%%%%%%%%%%%%%%%%%%

% Here you can turn off all labels
%\usepackage[nomsgs,ignoreunlbld]{refcheck}
%\usepackage{showkeys}
\usepackage{hyperref}
\newcommand*{\mailto}[1]{\href{mailto:#1}{\nolinkurl{#1}}}
\newcommand{\arxiv}[1]{\href{http://arxiv.org/abs/#1}{arXiv:#1}}

%%%%%%%%%%%%% fonts/sets %%%%%%%%%%%%%%%%%%%%%%%

\newcommand{\bbR}{{\mathbb{R}}}

\newcommand{\bbC}{{\mathbb{C}}}

\newcommand{\cB}{{\mathcal B}}
\newcommand{\cC}{{\mathcal C}}

\newcommand{\cF}{{\mathcal F}}

\newcommand{\cH}{{\mathcal H}}

%%%%%%%%%%%%%%%%%%  abbreviations %%%%%%%%%%%%%%%
%\newcommand{\dott}{\,\cdot\,}
\newcommand{\no}{\notag}
\newcommand{\lb}{\label}

\newcommand{\ol}{\overline}

\newcommand{\wti}{\widetilde}

\newcommand{\f}{\frac}
\newcommand{\bi}{\bibitem}
\newcommand{\hatt}{\widehat}

%%%%%%%%%%%%%%% renewed commands %%%%%%%%%

\renewcommand{\Re}{\mathop\mathrm{Re}}
\renewcommand{\Im}{\mathop\mathrm{Im}}
\renewcommand{\ge}{\geqslant}
\renewcommand{\le}{\leqslant}

%%%%%%%%%%%%%%%% operators %%%%%%%%%%%%%%%%
\DeclareMathOperator{\dom}{dom}

\DeclareMathOperator{\tr}{tr}

\DeclareMathOperator*{\slim}{s-lim}

\newcommand{\Sect}{\text{\rm Sect}}
\newcommand{\BIP}{\text{\rm BIP}}

\allowdisplaybreaks \numberwithin{equation}{section}
%%%%%%%%%%%%%%%%%%%%%%%%%%%%%%%%%%%%%
%%%%%%%%%%%%%%%%%%%% end of  definitions
%%%%%%%%%%%%%%%%%%%%%%%%%%%%%%%%%%%%%

\newtheorem{theorem}{Theorem}[section]

\newtheorem{corollary}[theorem]{Corollary}
\newtheorem{definition}[theorem]{Definition}
\newtheorem{hypothesis}[theorem]{Hypothesis}

\theoremstyle{remark}
\newtheorem{remark}[theorem]{Remark}

\begin{document}

\title[Operator Bounds Employing Interpolation]{Some
Operator Bounds Employing Complex Interpolation Revisited}

\author[F.\ Gesztesy]{Fritz Gesztesy} 
\address{Department of Mathematics,
University of Missouri, Columbia, MO 65211, USA}
\email{\mailto{gesztesyf@missouri.edu}}
%\email{gesztesyf@missouri.edu}
\urladdr{\url{http://www.math.missouri.edu/personnel/faculty/gesztesyf.html}}
%\urladdr{http://www.math.missouri.edu/personnel/faculty/gesztesyf.html}

\author[Y.\ Latushkin]{Yuri\ Latushkin} 
\address{Department of Mathematics,
University of Missouri, Columbia, MO 65211, USA}
\email{\mailto{latushkiny@missouri.edu}}
%\email{latushkiny@missouri.edu}
\urladdr{\url{http://www.math.missouri.edu/personnel/faculty/latushkiny.html}}
%\urladdr{http://www.math.missouri.edu/personnel/faculty/latushkiny.html}

\author[F.\ Sukochev]{Fedor Sukochev} 
\address{School of Mathematics and Statistics, UNSW, Kensington, NSW 2052,
Australia}
\email{\mailto{f.sukochev@unsw.edu.au}}
%\email{f.sukochev@unsw.edu.au}

\author[Y.\ Tomilov]{Yuri Tomilov} 
\address{Faculty of Mathematics and Computer Science, Nicholas
Copernicus University, ul.\  Chopina 12/18, 87-100 Torun, Poland} 
\email{\mailto{tomilov@mat.uni.torun.pl}}
%\email{tomilov@mat.uni.torun.pl}

\thanks{Partially supported by the NSF grant DMS-1067929, by the Research Board and the Research Council of the University of Missouri, by the ARC, the NCN grant DEC-2011/03/B/ST1/00407, and by
the EU Marie Curie IRSES program, project ``AOS'', No.\ 318910.}

\dedicatory{Dedicated with great pleasure to Charles Batty on the occasion of
his 60th birthday.}

\date{\today}

\subjclass[2010]{Primary 47A57, 47B10, 47B44; Secondary 47A30, 47B25.}
\keywords{Complex interpolation, generalized Heinz-type inequalities, operator 
and trace norm inequalities.}

%%%%%%%%%%%%%%%%%%%%%%%%%%%%%%%%%%%%%%%%
%%%%%%%%%%%%%%%%%%%%%%%%%%%%%%%%%%%%%%%%
\begin{abstract}
We revisit and extend known bounds on operator-valued functions of the type
$$
T_1^{-z} S T_2^{-1+z}, \quad z \in \ol \Sigma = \{z\in\bbC\,|\, \Re(z) \in [0,1]\},
$$
under various hypotheses on the linear operators $S$ and $T_j$,
$j=1,2$. We particularly single out the case of self-adjoint and
sectorial operators $T_j$ in some separable complex Hilbert space
$\cH_j$, $j=1,2$, and suppose that $S$ (resp., $S^*$) is a densely
defined closed operator mapping $\dom(S) \subseteq \cH_1$ into
$\cH_2$ (resp., $\dom(S^*) \subseteq \cH_2$ into $\cH_1$),
relatively bounded with respect to $T_1$ (resp., $T_2^*$). Using
complex interpolation methods, a generalized polar decomposition
for $S$, and Heinz's inequality, the  bounds we establish lead to
inequalities of the following type: Given $k \in (0, \infty)$,
\begin{align*}
& \big\|\ol{T_2^{-z}ST_1^{-1+z}}\big\|_{\cB(\cH_1,\cH_2)}    \no \\
& \quad \leq N_1 N_2 
e^{k (\Im(z))^2 + k \Re(z) [1-\Re(z)] + (4k)^{-1} (\theta_1 + \theta_2)^2}   \no \\
& \qquad \times \big\|ST_1^{-1}\big\|_{\cB(\cH_1,\cH_2)}^{1-\Re(z)} \,
\big\|S^*(T_2^*)^{-1}\big\|_{\cB(\cH_2,\cH_1)}^{\Re(z)},
\quad z \in \ol \Sigma,   
\end{align*}
which also implies, 
\begin{align*}
& \big\|\ol{T_2^{-x}ST_1^{-1+x}}\big\|_{\cB(\cH_1,\cH_2)}
\leq N_1 N_2 e^{(\theta_1 + \theta_2) [x(1-x)]^{1/2}}  \\
& \quad \times \big\|ST_1^{-1}\big\|_{\cB(\cH_1,\cH_2)}^{1-x}
\, \big\|S^*(T_2^*)^{-1}\big\|_{\cB(\cH_2,\cH_1)}^{x},  \quad x \in [0,1],
\end{align*}
assuming that $T_j$ have bounded imaginary powers, that
is, for some $N_j\ge 1$ and $\theta_j \ge 0,$
$$
\big\|T_j^{is}\big\|_{\cB(\cH)} \leq N_j e^{\theta_j |s|}, \quad s \in \bbR, \; j=1,2.
$$
We also derive analogous bounds with $\cB(\cH_1,\cH_2)$ replaced by trace ideals, 
$\cB_p(\cH_1, \cH_2)$, $p \in [1,\infty)$. 
The methods employed are elementary, predominantly relying on Hadamard's 
three-lines theorem and Heinz's inequality. 
\end{abstract}
%%%%%%%%%%%%%%%%%%%%%%%%%%%%%%%%%%%%%%%%
%%%%%%%%%%%%%%%%%%%%%%%%%%%%%%%%%%%%%%%%

\maketitle

% {\scriptsize \tableofcontents}

%%%%%%%%%%%%%%%%%%%%%%%%%%%%%%%%%%%%%%%%
%%%%%%%%%%%%%%%%%%%%%%%%%%%%%%%%%%%%%%%%
\section{Introduction}  \lb{s1}
%%%%%%%%%%%%%%%%%%%%%%%%%%%%%%%%%%%%%%%%
%%%%%%%%%%%%%%%%%%%%%%%%%%%%%%%%%%%%%%%%

This paper was inspired by an interesting result proved by Lesch in  
Appendix~A to his 2005 paper \cite{Le05}, dealing with uniqueness of spectral flow on 
spaces of unbounded Fredholm operators. More precisely, upon a close inspection 
of the proof of \cite[Proposition\ A.1]{Le05}, we derived the following interpolation 
result in \cite{GLMST11} (cf.\ \cite[Theorem~4.1]{GLMST11}):

%%%%%%%%%
\begin{theorem} \lb{t1.1} Let $\cH$ be a separable Hilbert space and 
$T \geq 0$ be a self-adjoint operator with $T^{-1}\in\cB(\cH)$. Assume that
$S$ is closed and densely defined in $\cH$, with
$\big(\dom(S)\cap\dom(S^*)\big) \supseteq \dom(T)$, implying
$ST^{-1}\in\cB(\cH)$ and $S^*T^{-1}\in\cB(\cH)$. If, in addition, 
$ST^{-1}\in\cB_1(\cH)$ and $S^*T^{-1}\in\cB_1(\cH)$, then 
\begin{equation}
T^{-1/2}ST^{-1/2} \in\cB_1(\cH), \quad 
(T^{-1/2}ST^{-1/2})^* = T^{-1/2}S^*T^{-1/2} \in\cB_1(\cH).    \lb{1.1}
\end{equation}
Moreover,  
\begin{equation} \lb{1.2}
\big\|T^{-1/2}ST^{-1/2}\big\|_{\cB_1(\cH)} 
= \big\|T^{-1/2}S^*T^{-1/2}\big\|_{\cB_1(\cH)}
\le \big\|ST^{-1}\big\|_{\cB_1(\cH)}^{1/2} \, \big\|S^*T^{-1}\big\|_{\cB_1(\cH)}^{1/2}.
\end{equation}
\end{theorem}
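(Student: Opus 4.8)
The plan is to interpolate an analytic operator-valued function along the closed strip $\ol\Sigma$ and invoke Hadamard's three-lines theorem. First I would introduce, for a fixed finite-rank operator approximating $S$ (or directly for $S$ if boundedness of the relevant family can be controlled), the function
\begin{equation*}
F(z) = T^{-z} S T^{-1+z}, \quad z \in \ol\Sigma,
\end{equation*}
which is well defined because $ST^{-1} \in \cB(\cH)$ by hypothesis, because $T^{-z}$ and $T^{-1+z} = T^{-z}\cdot T^{z-1+z}$... more carefully, because $T^{\Re(z)} T^{-1+z}$ stays bounded for $\Re(z)\in[0,1]$, and because $T$ being self-adjoint and nonnegative with bounded inverse makes $z\mapsto T^{-z}$ a bounded-holomorphic semigroup on the strip. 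The key analytic input is that $F$ is holomorphic in the open strip and bounded and continuous on $\ol\Sigma$ in the relevant norm; this is where one must be slightly careful, and I expect the main obstacle to be exactly the justification that $F$ (and its trace-norm analogue) is bounded on all of $\ol\Sigma$ — one typically reduces to a dense set of nice vectors/finite-rank operators, establishes the estimate there with constants independent of the approximation, and then passes to the limit, using closability (the bar notation $\ol{\,\cdot\,}$ already signals that a closure is being taken).

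Next I would compute the boundary behavior. On the line $\Re(z)=0$, write $z = is$; then $T^{-z} = T^{-is}$ is unitary (since $T\ge 0$ is self-adjoint), and $T^{-1+z} = T^{-1} T^{is}$, so $F(is) = T^{-is} (ST^{-1}) T^{is}$, whence $\|F(is)\|_{\cB(\cH)} \le \|ST^{-1}\|_{\cB(\cH)}$ by unitarity of $T^{\pm is}$, and likewise $\|F(is)\|_{\cB_1(\cH)} \le \|ST^{-1}\|_{\cB_1(\cH)}$ since conjugation by unitaries preserves the trace norm. On the line $\Re(z)=1$, write $z = 1+is$; then $F(1+is) = T^{-1-is} S T^{is} = T^{-is}\big(T^{-1} S\big) T^{is}$, and one identifies $T^{-1}S$ with the adjoint of $S^* T^{-1}$ (more precisely $(S^* T^{-1})^* \supseteq T^{-1} S$, so the closure of $T^{-1}S$ is bounded with the same norm as $S^* T^{-1}$), giving $\|F(1+is)\|_{\cB(\cH)} \le \|S^* T^{-1}\|_{\cB(\cH)}$ and, for trace norms, $\|F(1+is)\|_{\cB_1(\cH)} \le \|S^* T^{-1}\|_{\cB_1(\cH)}$, using $\|A\|_{\cB_1} = \|A^*\|_{\cB_1}$. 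Here is the one genuinely nontrivial ingredient beyond three-lines: controlling $T^{-z} S T^{-1+z}$ away from the boundary requires knowing that $z\mapsto T^{-z}$ has at most polynomial (indeed here bounded, since $T\ge0$) growth on vertical lines — Heinz's inequality (for the self-adjoint nonnegative case, the operator monotonicity/convexity that yields $\|T^{-\Re(z)} A T^{-1+\Re(z)}\| \le \|A T^{-1}\|^{1-\Re(z)}\|T^{-1}A\|^{\Re(z)}$ type control on the real axis) is what keeps the interior bound finite with no exponential correction, so the three-lines constant is exactly $1$.

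Applying Hadamard's three-lines theorem to $\log\|F(z)\|$ (in either $\cB(\cH)$ or $\cB_1(\cH)$) then yields, for $z = x + iy$ with $x \in [0,1]$,
\begin{equation*}
\|F(z)\| \le \big\|ST^{-1}\big\|^{1-x} \, \big\|S^* T^{-1}\big\|^{x},
\end{equation*}
and specializing to $z = 1/2$ gives both the membership $T^{-1/2} S T^{-1/2} \in \cB_1(\cH)$ (taking the trace-norm version, which shows the finite-rank approximants are Cauchy in $\cB_1(\cH)$ and the limit is the claimed closure) and the bound
\begin{equation*}
\big\|T^{-1/2} S T^{-1/2}\big\|_{\cB_1(\cH)} \le \big\|ST^{-1}\big\|_{\cB_1(\cH)}^{1/2}\,\big\|S^* T^{-1}\big\|_{\cB_1(\cH)}^{1/2}.
\end{equation*}
The identity $(T^{-1/2} S T^{-1/2})^* = T^{-1/2} S^* T^{-1/2}$ follows because $T^{-1/2}$ is bounded self-adjoint and $S$ is closed, so $(T^{-1/2} S T^{-1/2})^* = T^{-1/2} S^* T^{-1/2}$ on the natural domains, both sides being bounded everywhere by the argument just given applied with $S$ replaced by $S^*$; and the equality of the two trace norms in \eqref{1.2} is immediate from $\|A\|_{\cB_1} = \|A^*\|_{\cB_1}$. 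The step I expect to require the most care is the approximation/closure argument needed to legitimately speak of $\ol{T^{-z} S T^{-1+z}}$ as a bounded (trace-class) operator and to pass the three-lines bound through the limit; the three-lines application itself and the boundary computations are routine once that scaffolding is in place.
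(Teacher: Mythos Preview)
Your proposal is correct and follows essentially the same route as the paper: introduce $F(z)=\ol{T^{-z}ST^{-1+z}}$, compute boundary values using unitarity of $T^{\pm is}$ (since $T\geq 0$), and apply a three-lines theorem in the $\cB_1$-norm (the paper invokes the Gohberg--Krein version, Theorem~\ref{t3.1}, which is exactly the duality-against-finite-rank argument you sketch). The one place where the paper is more explicit than you is the interior bound you flag as the main obstacle: rather than a vague appeal to Heinz or approximation, the paper uses the generalized polar decomposition $S=|S^*|^{\Re(z)}U_S|S|^{1-\Re(z)}$ together with Heinz's inequality (applied to $|S|$ and $|S^*|$ versus $|T|$) to exhibit $\ol{T^{-z}ST^{-1+z}}$ as a product of explicitly bounded factors, cf.\ \eqref{2.42} and \eqref{3.21}.
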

%%%%%%%%%%

Theorem \ref{t1.1} was used repeatedly in \cite{GLMST11} (in Section~4 and especially, 
in Section~6). We then announced the present paper in 2010, but due to a variety of 
reasons, finishing it was delayed for quite a while. We should also mention that in the 
meantime we became aware of a paper by Huang \cite{Hu88}, who proved, in fact, 
extended, some parts of Lesch's Proposition~A.1 in \cite{Le05} already in 1988 (we will 
return to this in Sections \ref{s2} and \ref{s3}). 

Given Theorem \ref{t1.1}, we became interested in extensions of it of the following 
three types: \\[1mm]  
\indent 
$\bullet$ The case of fractional powers of $T$ different from $1/2$. \\[1mm] 
\indent 
$\bullet$ General trace ideals $\cB_p(\cH)$, $p \in (1,\infty)$.  \\[1mm] 
\indent 
$\bullet$ Classes of non-self-adjoint operators $T$, especially, sectorial operators $T$ having \\
\hspace*{6mm} bounded imaginary powers. \\[1mm] 
\indent 
While interpolation theory has long been raised to a high art, we emphasize that the methods 
we use are entirely elementary, being grounded in complex interpolation, particularly, 
in Hadamard's three-lines theorem as pioneered by Kato \cite[Sect.~3]{Ka62}, and Heinz's inequality.
In fact, Kato \cite[Sect.~3]{Ka62} presents a proof of the generalized Heinz inequality applying Hadamard's three-lines theorem, and hence the latter is the ultimate ingredient in our proofs. 

We continue with a brief summary of the content of each section. 
One of the principal results proven in Section \ref{s2} reads as follows: 
Assume that $T_j$ are self-adjoint operators in $\cH_j$ with $T_j^{-1} \in \cB(\cH_j)$, 
$j=1,2$, and suppose that $S$ is a closed operator mapping $\dom(S) \subseteq \cH_1$ 
into $\cH_2$ satisfying $\dom(S) \supseteq \dom(T_1)$ and $\dom(S^*) \supseteq \dom(T_2)$.    Then $T_2^{-z}ST_1^{-1+z}$  defined on $\dom(T_1)$, $z\in\ol\Sigma$, is closable, and 
given $ k \in (0,\infty)$, one obtains
\begin{align}
& \big\|\ol{T_2^{-z}ST_1^{-1+z}}\big\|_{\cB(\cH_1,\cH_2)}
\leq \big\|ST_1^{-1}\big\|_{\cB(\cH_1,\cH_2)}^{1-\Re(z)} \,
\big\|S^*T_2^{-1}\big\|_{\cB(\cH_2,\cH_1)}^{\Re(z)}    \no \\
& \quad \times \begin{cases} e^{k |\Im(z)|^2 + k \Re(z)[1 - \Re(z)] + k^{-1} \pi^2}, \\
e^{k |\Im(z)|^2 + k \Re(z)[1 - \Re(z)] + (4k)^{-1} \pi^2}, \text{ if $T_1\geq 0$, or $T_2\geq 0$,} \\
1, \text{ if $T_j\geq 0$, $j=1,2$,}
\end{cases} \quad z \in \ol \Sigma,     \lb{1.3}
\end{align}
as well as 
\begin{align}
& \big\|\ol{T_2^{-x}ST_1^{-1+x}}\big\|_{\cB(\cH_1,\cH_2)}
\leq \big\|ST_1^{-1}\big\|_{\cB(\cH_1,\cH_2)}^{1- x} \,
\big\|S^*T_2^{-1}\big\|_{\cB(\cH_2,\cH_1)}^x    \no \\
& \quad \times \begin{cases} e^{2 \pi [x(1-x)]^{1/2}}, \\
e^{\pi [x(1-x)]^{1/2}}, \text{ if $T_1\geq 0$, or $T_2\geq 0$,} \\
1, \text{ if $T_j\geq 0$, $j=1,2$,}
\end{cases} \quad x \in [0,1].     \lb{1.4}
\end{align}

In Section \ref{s3} we turn to trace ideals $\cB_p(\cH)$, $p \in (1,\infty)$. In addition to the hypotheses imposed on $T_j$, $j=1,2$, and $S$ mentioned in the paragraph preceding 
\eqref{1.3}, let $p\in [1,\infty)$ and $ST_1^{-1} \in \cB_p(\cH_1,\cH_2)$ and 
$S^*T_2^{-1} \in \cB_p(\cH_2,\cH_1)$. Then given $ k \in (0,\infty)$, the principal result in 
Section \ref{s3} derives the analog of \eqref{1.3} and \eqref{1.4} in the form, 
\begin{align}
& \big\|\ol{T_2^{-z}ST_1^{-1+z}}\big\|_{\cB_p(\cH_1,\cH_2)}
\leq \big\|ST_1^{-1}\big\|_{\cB_p(\cH_1,\cH_2)}^{1-\Re(z)} \,
\big\|S^*T_2^{-1}\big\|_{\cB_p(\cH_2,\cH_1)}^{\Re(z)}    \no \\
& \quad \times \begin{cases} e^{k |\Im(z)|^2 + k \Re(z)[1 - \Re(z)] + k^{-1} \pi^2}, \\
e^{k |\Im(z)|^2 + k \Re(z)[1 - \Re(z)] + (4k)^{-1} \pi^2}, \text{ if $T_1\geq 0$, or $T_2\geq 0$,} \\
1, \text{ if $T_j\geq 0$, $j=1,2$,}
\end{cases} \quad z \in \ol \Sigma,    \lb{1.5}
\end{align}
as well as 
\begin{align}
& \big\|\ol{T_2^{-x}ST_1^{-1+x}}\big\|_{\cB_p(\cH_1,\cH_2)}
\leq \big\|ST_1^{-1}\big\|_{\cB_p(\cH_1,\cH_2)}^{1- x} \,
\big\|S^*T_2^{-1}\big\|_{\cB_p(\cH_2,\cH_1)}^x    \no \\
& \quad \times \begin{cases} e^{2 \pi [x(1-x)]^{1/2}}, \\
e^{\pi [x(1-x)]^{1/2}}, \text{ if $T_1\geq 0$, or $T_2\geq 0$,} \\
1, \text{ if $T_j\geq 0$, $j=1,2$,}
\end{cases} \quad x \in [0,1].    \lb{1.6}
\end{align}

In our final Section \ref{s4} we discuss the extension of \eqref{1.3} and \eqref{1.4} 
from self-adjoint to sectorial operators $T_j$, $j=1,2$. One of our principal results there 
reads as follows: Assume that $T_j$ are  sectorial operators in $\cH_j$ such 
that $T_j^{-1} \in \cB(\cH_j)$, and that for some $\theta_j \geq 0$, $N_j \geq 1$, 
$\big\|T^{is}_j\big\|_{\cB(\cH_j)} \leq N_j e^{\theta_j |s|}$, $s \in \bbR$, 
$j=1,2$. In addition, suppose that $S$ is a closed operator mapping 
$\dom(S) \subseteq \cH_1$ into $\cH_2$, satisfying $\dom(S) \supseteq \dom(T_1)$ and 
$\dom(S^*) \supseteq \dom(T_2^*)$. Then $T_2^{-z}ST_1^{-1+z}$ defined on 
$\dom(T_1)$, $z\in\ol\Sigma$, is closable, and given $k \in (0, \infty)$, one obtains
\begin{align}
& \big\|\ol{T_2^{-z}ST_1^{-1+z}}\big\|_{\cB(\cH_1,\cH_2)} \leq N_1 N_2
e^{k (\Im(z))^2 + k \Re(z) [1-\Re(z)] + (4k)^{-1} (\theta_1 + \theta_2)^2}   \no \\
& \quad \times \big\|ST_1^{-1}\big\|_{\cB(\cH_1,\cH_2)}^{1-\Re(z)}
\, \big\|S^*(T_2^*)^{-1}\big\|_{\cB(\cH_2,\cH_1)}^{\Re(z)}, \quad
z \in \ol \Sigma,    \lb{1.7}
\end{align}
as well as 
\begin{align}
\begin{split}
& \big\|\ol{T_2^{-x}ST_1^{-1+x}}\big\|_{\cB(\cH_1,\cH_2)}
\leq N_1  N_2  e^{(\theta_1  + \theta_2) [x(1-x)]^{1/2}}     \\
& \quad \times \big\|ST_1^{-1}\big\|_{\cB(\cH_1,\cH_2)}^{1-x} \,
\big\|S^*(T_2^*)^{-1}\big\|_{\cB(\cH_2,\cH_1)}^{x}, \quad x \in
[0,1].    \lb{1.8}
\end{split}
\end{align}
Moreover, in addition to the hypotheses on $T_j$, $j=1,2$, and $S$ mentioned in the  
paragraph preceding \eqref{1.7}, let $p\in [1,\infty)$ and $ST_1^{-1} \in \cB_p(\cH_1,\cH_2)$, 
$S^*(T_2^*)^{-1} \in \cB_p(\cH_2,\cH_1)$. Then given $k \in (0, \infty)$, one 
obtains the analog of \eqref{1.5} and \eqref{1.6} in the form, 
\begin{align}
& \big\|\ol{T_2^{-z}ST_1^{-1+z}}\big\|_{\cB_p(\cH_1,\cH_2)} \leq
N_1 N_2
e^{k (\Im(z))^2 + k \Re(z) [1-\Re(z)] + (4k)^{-1} (\theta_1  + \theta_2)^2}   \no \\
& \quad \times
\big\|ST_1^{-1}\big\|_{\cB_p(\cH_1,\cH_2)}^{1-\Re(z)} \,
\big\|S^*(T_2^*)^{-1}\big\|_{\cB_p(\cH_2,\cH_1)}^{\Re(z)}, \quad z
\in \ol \Sigma,    \lb{1.9}
\end{align}
as well as 
\begin{align}
\begin{split}
& \big\|\ol{T_2^{-x}ST_1^{-1+x}}\big\|_{\cB_p(\cH_1,\cH_2)}
\leq N_1 N_2  e^{(\theta_1 + \theta_2) [x(1-x)]^{1/2}}   \\
& \quad \times \big\|ST_1^{-1}\big\|_{\cB_p(\cH_1,\cH_2)}^{1-x} \,
\big\|S^*(T_2^*)^{-1}\big\|_{\cB_p(\cH_2,\cH_1)}^{x},  \quad x \in
[0,1].    \lb{1.10}
\end{split}
\end{align}

We note that we permit operators $T$ in \eqref{1.3}--\eqref{1.6} to have spectrum covering 
$\bbR$ except for a neighborhood of zero. Thus, our results in Section \ref{s4} for sectorial 
operators $T$ do not cover the results  \eqref{1.3}--\eqref{1.6}. 

In conclusion, we briefly summarize the basic notation used in this paper: Let
$\cH$ be a separable complex Hilbert space, $(\cdot,\cdot)_{\cH}$ the
scalar product in $\cH$ (linear in the second factor), and $I_{\cH}$ the identity operator 
in $\cH$. Limits in the norm topology on $\cH$ (also called strong limits in $\cH$) will be 
denoted by $\slim$. If $T$ is a linear operator mapping (a subspace of\,) a
Hilbert space into another, $\dom(T)$ denotes the domain of $T$. The closure
of a closable operator $S$ is denoted by $\ol S$. The spectrum and
resolvent set of a closed linear operator in $\cH$ will be denoted by
$\sigma(\cdot)$  and $\rho(\cdot)$, respectively. The
Banach spaces of bounded and compact linear operators in $\cH$ are
denoted by $\cB(\cH)$ and $\cB_\infty(\cH)$, respectively; in the context of two
Hilbert spaces, $\cH_j$, $j=1,2$, we use the analogous abbreviations
$\cB(\cH_1, \cH_2)$ and $\cB_\infty(\cH_1, \cH_2)$. Similarly,
the usual $\ell^p$-based Schatten--von Neumann (trace) ideals are denoted
by $\cB_p(\cH)$, $p\in [1,\infty)$.

%%%%%%%%%%%%%%%%%%%%%%%%%%%%%%%%%%%%%%
%%%%%%%%%%%%%%%%%%%%%%%%%%%%%%%%%%%%%%
\section{Interpolation and some Operator Norm Bounds  Revisited}  \lb{s2}
%%%%%%%%%%%%%%%%%%%%%%%%%%%%%%%%%%%%%%
%%%%%%%%%%%%%%%%%%%%%%%%%%%%%%%%%%%%%%

In this section we revisit and extend a number of bounds
collected by Lesch in \cite[Proposition\ A.1]{Le05}:

Through most of this section we will make the following assumptions:

%%%%%%%%%%%%%
\begin{hypothesis} \lb{h2.1}
Assume that $T$ is a self-adjoint operator in $\cH$ with $T^{-1} \in \cB(\cH)$.
In addition, suppose that $S$ is a closed operator in $\cH$ satisfying
\begin{equation}
\dom(S) \cap \dom(S^*) \supseteq \dom(T).   \lb{2.-1}
\end{equation}
\end{hypothesis}
%%%%%%%%%%%%%

In particular, Hypothesis \ref{h2.1} implies that
\begin{equation}
ST^{-1} \in \cB(\cH), \quad S^*T^{-1} \in \cB(\cH).     \lb{2.0}
\end{equation}

%%%%%%%%%%%%%
\begin{remark} \lb{r2.2}
$(i)$ In the sequel we will adhere to the following convention: Operator
products $AB$ of two linear operators $A$ and $B$ in $\cH$ are always
assumed to be maximally defined, that is,
\begin{equation}
\dom(AB) = \{f\in\cH \,|\, f\in\dom(B), \, Bf \in \dom(A)\},    \lb{2.0a}
\end{equation}
unless explicitly stated otherwise. The same convention is of course applied to
products of three or more linear operators in $\cH$. \\
$(ii)$ We recall the following useful facts (see, e.g., \cite[Theorem\ 4.19 ]{We80}): Suppose $T_j$, $j=1,2$, are two densely defined linear operators in $\cH$ such that $T_2 T_1$ is also densely defined in
$\cH$. Then,
\begin{equation}
 (T_2 T_1)^* \supseteq T_1^* T_2^*.     \lb{2.0b}
 \end{equation}
If in addition $T_2 \in \cB(\cH)$, then
\begin{equation}
(T_2 T_1)^* = T_1^* T_2^*.      \lb{2.0c}
\end{equation}
\end{remark}
%%%%%%%%%%%%%

%%%%%%%%%%%%%
\begin{theorem}   \lb{t2.3}
Assume Hypothesis \ref{h2.1}. Then the following facts hold: \\
$(i)$ The operators $T^{-1}ST$ and $T^{-1}S^*T$ are well-defined on
$\dom(T^2)$, and hence densely defined in $\cH$,
\begin{equation}
\dom\big(T^{-1}ST\big) \cap \dom\big(T^{-1}S^*T\big) \supseteq \dom\big(T^2\big).     \lb{2.1}
\end{equation}
$(ii)$ The relations
\begin{equation}
\big(T^{-1}ST\big)^* = TS^*T^{-1}, \quad \big(T^{-1}S^*T\big)^* = TST^{-1},   \lb{2.2}
\end{equation}
hold, and hence $TS^*T^{-1}$ and $TST^{-1}$ are closed in $\cH$. \\
$(iii)$ One infers that
\begin{equation}
T^{-1}ST \, \text{ is bounded  if and only if } \,
(T^{-1}ST)^* = TS^*T^{-1} \in \cB(\cH).    \lb{2.3}
\end{equation}
In case $T^{-1}ST$ is bounded, then
\begin{equation}
\ol{T^{-1}ST} = (TS^*T^{-1})^*, \quad
\big\|\ol{T^{-1}ST}\big\|_{\cB(\cH)} = \big\|TS^*T^{-1}\big\|_{\cB(\cH)}.    \lb{2.4}
\end{equation}
Analogously, one concludes that
\begin{equation}
T^{-1}S^*T \, \text{ is bounded  if and only if } \,
\big(T^{-1}S^*T\big)^* = TST^{-1} \in \cB(\cH).    \lb{2.5}
\end{equation}
In case $T^{-1}S^*T$ is bounded, then
\begin{equation}
\ol{T^{-1}S^*T} = \big(TST^{-1}\big)^*, \quad
\big\|\ol{T^{-1}S^*T}\big\|_{\cB(\cH)} = \big\|TST^{-1}\big\|_{\cB(\cH)}.    \lb{2.6}
\end{equation}
\end{theorem}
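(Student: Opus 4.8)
The plan is to treat the three parts in order, the only substantial content being in $(ii)$. Throughout I read products as maximally defined in the sense of Remark \ref{r2.2}$(i)$, so that $\dom\big(T^{-1}ST\big)=\{f\in\dom(T)\,|\,Tf\in\dom(S)\}$ (the requirement $STf\in\dom(T^{-1})$ being vacuous since $T^{-1}\in\cB(\cH)$), and likewise $\dom\big(TS^*T^{-1}\big)=\{g\in\cH\,|\,S^*T^{-1}g\in\dom(T)\}$ (since $\ran(T^{-1})=\dom(T)\subseteq\dom(S^*)$ by \eqref{2.-1}). For part $(i)$, if $f\in\dom\big(T^2\big)$ then $Tf\in\dom(T)\subseteq\dom(S)\cap\dom(S^*)$ by \eqref{2.-1}, so $STf$ and $S^*Tf$ are well-defined vectors, and applying $T^{-1}\in\cB(\cH)$ yields \eqref{2.1}. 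That $\dom\big(T^2\big)$ is dense — indeed, a core for $T$ — is the only input from spectral theory: for $f\in\dom(T)$ the truncations $E_T([-n,n])f$, with $\{E_T(\cdot)\}$ the spectral family of $T$, lie in $\dom\big(T^2\big)$, converge to $f$, and have $T$-images converging to $Tf$. I would record this core property explicitly, since it drives part $(ii)$.

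For part $(ii)$ I would establish $\big(T^{-1}ST\big)^*=TS^*T^{-1}$ by two inclusions. For ``$\subseteq$'': given $g\in\dom\big(\big(T^{-1}ST\big)^*\big)$ with $\big(T^{-1}STf,g\big)_{\cH}=(f,h)_{\cH}$ for all $f$ in the (dense) domain, restrict the test vectors to $f\in\dom\big(T^2\big)$ and transfer operators across the inner product, using $T^{-1}=\big(T^{-1}\big)^*$ and $\big(STf,T^{-1}g\big)_{\cH}=\big(Tf,S^*T^{-1}g\big)_{\cH}$ (legitimate since $Tf\in\dom(S)$ and $T^{-1}g\in\dom(T)\subseteq\dom(S^*)$), to obtain $\big(Tf,S^*T^{-1}g\big)_{\cH}=(f,h)_{\cH}$ for all $f\in\dom\big(T^2\big)$; since $\dom\big(T^2\big)$ is a core for the self-adjoint $T$, this forces $S^*T^{-1}g\in\dom(T)$ and $TS^*T^{-1}g=h$, i.e.\ $g\in\dom\big(TS^*T^{-1}\big)$ and the values agree. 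For ``$\supseteq$'': if $g\in\dom\big(TS^*T^{-1}\big)$, the identical computation run forward for \emph{every} $f\in\dom\big(T^{-1}ST\big)$ (now the full maximal domain, possibly larger than $\dom\big(T^2\big)$) yields $\big(T^{-1}STf,g\big)_{\cH}=\big(f,TS^*T^{-1}g\big)_{\cH}$, so $g\in\dom\big(\big(T^{-1}ST\big)^*\big)$ with $\big(T^{-1}ST\big)^*g=TS^*T^{-1}g$. This proves the first identity in \eqref{2.2}; the second, $\big(T^{-1}S^*T\big)^*=TST^{-1}$, follows by applying the first with $S$ replaced by $S^*$, which is admissible because $S$ is closed (so $S^{**}=S$) and Hypothesis \ref{h2.1} is symmetric in $S$ and $S^*$. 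Closedness of $TS^*T^{-1}$ and $TST^{-1}$ is then automatic, being adjoints.

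For part $(iii)$ I would invoke only the elementary fact that a densely defined operator $A$ in $\cH$ is bounded if and only if $A^*\in\cB(\cH)$, in which case $\ol A=A^{**}=(A^*)^*\in\cB(\cH)$ and $\big\|\ol A\big\|_{\cB(\cH)}=\|A^*\|_{\cB(\cH)}$. Taking $A=T^{-1}ST$ (densely defined by $(i)$) with $A^*=TS^*T^{-1}$ from $(ii)$ yields \eqref{2.3} and \eqref{2.4}; taking $A=T^{-1}S^*T$ with $A^*=TST^{-1}$ yields \eqref{2.5} and \eqref{2.6}.

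The only step demanding care is the bookkeeping in part $(ii)$: one must not conflate $\dom\big(T^{-1}ST\big)$ with $\dom\big(T^2\big)$ — the hypothesis $ST^{-1}\in\cB(\cH)$ bounds $S$ relative to $T$ but does not make $\dom(S)$ equal to $\dom(T)$ — and one must use the core property of $\dom\big(T^2\big)$ precisely at the point where the test vectors are restricted to it. Everything else reduces to routine transfers of operators across inner products, powered by self-adjointness of $T$ and $T^{-1}$ and the inclusion $\dom(T)\subseteq\dom(S)\cap\dom(S^*)$.
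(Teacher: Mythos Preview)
Your proof is correct and follows essentially the same approach as the paper's: part $(i)$ is identical, part $(iii)$ invokes the same elementary adjoint fact (which the paper cites as \cite[Theorem~4.14(a)]{We80}), and in part $(ii)$ the crucial ``$\subseteq$'' inclusion is obtained exactly as in the paper via the core property of $\dom\big(T^2\big)$. The only cosmetic difference is that for the ``$\supseteq$'' inclusion you compute directly, whereas the paper invokes the general identity $(T^{-1}[ST])^* = [ST]^*T^{-1} \supseteq TS^*T^{-1}$ from Remark~\ref{r2.2}\,$(ii)$; your direct calculation is simply an unwinding of that fact.
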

%%%%%%%%%%%%%
\begin{proof}
We start by recalling that
\begin{align}
\dom\big(T^{-1}ST\big) &= \big\{g \in \dom(T) \,\big|\, Tg \in \dom(S)\big\},    \lb{2.7} \\
\dom\big(TS^*T^{-1}\big) &= \big\{f \in \cH \,\big|\, S^*T^{-1}f \in \dom(T)\big\}.   \lb{2.8}
\end{align}
$(i)$ Suppose that $g \in \dom\big(T^2\big)$. Then
$Tg \in \dom(T) \subseteq \dom(S)$ and hence
\begin{equation}
\dom\big(T^{-1}ST\big) \supseteq \dom\big(T^2\big).    \lb{2.9}
\end{equation}
Since $T$ and hence $T^2$ are self-adjoint in $\cH$ and hence
necessarily densely defined, $T^{-1}ST$ is densely defined in $\cH$. The same applies to
$T^{-1} S^* T$. \\
$(ii)$ Applying Remark \ref{r2.2}\,$(ii)$, one obtains
\begin{equation}
\big(T^{-1}ST\big)^* = \big(T^{-1}[ST]\big)^* = [ST]^* T^{-1}\supseteq T S^* T^{-1},  \lb{2.11}
\end{equation}
since $T^{-1}ST$ is densely defined in $\cH$ by item $(i)$. To prove the converse inclusion in \eqref{2.11}, we now assume that
$f\in \dom\big(\big(T^{-1}ST\big)^*\big)$ and $g\in \dom\big(T^2\big) \subseteq \dom\big(T^{-1}ST\big)$. Then
\begin{equation}
\big(\big(T^{-1}ST\big)^*f,g\big)_{\cH}
= \big(f,T^{-1}ST g\big)_{\cH} = \big(T^{-1}f,STg\big)_{\cH} = \big(S^*T^{-1}f, Tg\big)_{\cH}.    \lb{2.12}
\end{equation}
Since $\dom\big(T^2\big)$ is an operator core for $T$, \eqref{2.12} extends to all
$g \in \dom(T)$, that is, one has
\begin{equation}
\big(\big(T^{-1}ST\big)^*f,g\big)_{\cH} = \big(S^*T^{-1}f, Tg\big)_{\cH}, \quad
f\in \dom\big(\big(T^{-1}ST\big)^*\big), \; g\in \dom(T).    \lb{2.13}
\end{equation}
Consequently, $S^*T^{-1}f \in \dom(T)$ and
\begin{align}
\begin{split}
\big(\big(T^{-1}ST\big)^*f,g\big)_{\cH} = \big(S^*T^{-1}f,Tg\big)_{\cH} = \big(TS^*T^{-1}f,g\big)_{\cH}&,
\lb{2.14} \\
f\in \dom\big(\big(T^{-1}ST\big)^*\big), \; g\in \dom(T)&,
\end{split}
\end{align}
implying
\begin{equation}
\big(T^{-1}ST\big)^*f = TS^*T^{-1}f, \quad f\in \dom\big(\big(T^{-1}ST\big)^*\big),    \lb{2.15}
\end{equation}
and hence,
\begin{equation}
\big(T^{-1}ST\big)^* \subseteq TS^*T^{-1}.    \lb{2.16}
\end{equation}
Then \eqref{2.11} and \eqref{2.16} yield the first relation in \eqref{2.2}. Replacing $S$
by $S^*$ yields the second relation in \eqref{2.2}.
\\
$(iii)$ Since $T^{-1}ST$ is densely defined by \eqref{2.1}, an application of
\cite[Theorem 4.14(a)]{We80} yields \eqref{2.3}. Equation \eqref{2.4}
is an immediate consequence of \eqref{2.3}.

Again, replacing $S$ by $S^*$ implies \eqref{2.5} and \eqref{2.6}.
\end{proof}
%%%%%%%%%%%%%

In the following we denote by $\Sigma \subset \bbC$ the open strip
\begin{equation}
\Sigma = \{z\in\bbC \,|\, \Re(z) \in (0,1)\},    \lb{2.16a}
\end{equation}
and by $\ol \Sigma$ its closure.

To state additional results we will have to apply a version of Hadamard's
three-lines theorem and hence recall the following general result:

%%%%%%%%%%%%%
\begin{theorem} [\cite{Hi52} (see also \cite{GK69}, Sect.\ III.13)] \lb{t2.4}
Suppose $\phi(\cdot)$ is an analytic function on $\Sigma$, continuous on $\ol \Sigma$, and
satisfying for some fixed $C\in\bbR$ and $a \in [0,\pi)$,
\begin{equation}
\sup_{z \in \ol \Sigma} \Big[e^{-a |\Im(z)|} \ln(|\phi(z)|) \Big] \leq C.    \lb{2.17}
\end{equation}
Then
\begin{align}
& |\phi(z)|  \leq \exp\bigg\{\f{\sin(\pi \Re(z))}{2} \int_{\bbR} dy \,
\bigg[\f{\ln(\phi(iy))}{\cosh(\pi (y-\Im(z)))-\cosh(\pi \Re(z))}   \no \\
& \hspace*{4.8cm} +
\f{\ln(\phi(1+iy))}{\cosh(\pi (y-\Im(z)))+\cosh(\pi \Re(z))}\bigg]\bigg\},   \no \\
& \hspace*{9.7cm}   z \in \Sigma.    \lb{2.18}
\end{align}
If in addition, for some $C_0, C_1 \in (0,\infty)$,
\begin{equation}
|\phi(iy)| \leq C_0, \quad |\phi(1+iy)| \leq C_1, \quad y\in\bbR,     \lb{2.19}
\end{equation}
then
\begin{equation}
|\phi(z)| \leq C_0^{1-\Re(z)} C_1^{\Re(z)}, \quad z\in{\ol \Sigma}.   \lb{2.21}
\end{equation}
\end{theorem}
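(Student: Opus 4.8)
The plan is to derive \eqref{2.18} by exhibiting the right-hand side as the exponential of a harmonic majorant of $\ln|\phi|$ on $\Sigma$, namely the Poisson integral over $\partial\Sigma$ of the boundary values of $\ln|\phi|$, and then promoting the resulting pointwise inequality on $\ol\Sigma$ to all of $\Sigma$ by a Phragm\'en--Lindel\"of argument in which the growth hypothesis \eqref{2.17} plays the decisive role; the bound \eqref{2.21} will then fall out of \eqref{2.18} upon specializing the boundary data.

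First I would discard the trivial case $\phi\equiv 0$ and assume $\phi\not\equiv 0$, so that $u:=\ln|\phi|$ is subharmonic on $\Sigma$ and upper semicontinuous on $\ol\Sigma$ with values in $[-\infty,\infty)$. The two kernels in \eqref{2.18} arise from the half-plane Poisson kernel $\pi^{-1}\Im(w)\,|w-t|^{-2}$ by the conformal change of variables $w=e^{i\pi z}$, which carries $\Sigma$ onto the upper half-plane $\{\Im(w)>0\}$, the lines $\Re(z)=0$ and $\Re(z)=1$ going to the positive and the negative real axis, respectively; hence, writing $h(z)$ for the integral on the right-hand side of \eqref{2.18}, $h$ is harmonic on $\Sigma$ wherever the integral converges. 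Convergence is where \eqref{2.17} enters: on the two boundary lines it gives $\ln|\phi(iy)|\le Ce^{a|y|}$ and $\ln|\phi(1+iy)|\le Ce^{a|y|}$ with $a<\pi$, whereas the kernels decay like $e^{-\pi|y|}$ as $|y|\to\infty$, so the positive parts of the integrands are dominated by $e^{(a-\pi)|y|}$; truncating the boundary data from below by $-M$, treating each truncation separately, and letting $M\to\infty$ by monotone convergence (this also absorbs a possible boundary zero of $\phi$, where $\ln|\phi|=-\infty$), one obtains a harmonic function $h$ on $\Sigma$ with boundary values $\ln|\phi|$ on $\partial\Sigma$ and with $|h(x+iy)|=\Oh\big(e^{a|y|}\big)$ as $|y|\to\infty$, uniformly for $x\in[0,1]$.

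Now set $v:=u-h$; it is subharmonic on $\Sigma$, obeys $\limsup_{z\to\zeta}v(z)\le 0$ for every $\zeta\in\partial\Sigma$ (immediate where $u\to-\infty$, and otherwise a consequence of the boundary behavior of the Poisson integral $h$), and satisfies $v(x+iy)=\Oh\big(e^{a|y|}\big)$ as $|y|\to\infty$. For the Phragm\'en--Lindel\"of step, fix $\beta$ with $a<\beta<\pi$---possible precisely because $a<\pi$---and set $\eta_\beta(z):=\Re[\cos(\beta(z-1/2))]=\cos(\beta(\Re(z)-1/2))\cosh(\beta\Im(z))$, which is entire and harmonic and, since $\beta/2<\pi/2$, satisfies $\eta_\beta(z)\ge\cos(\beta/2)\cosh(\beta\Im(z))>0$ on $\ol\Sigma$, hence $\eta_\beta(x+iy)\ge 2^{-1}\cos(\beta/2)\,e^{\beta|y|}$. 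For $\eps>0$ the function $v-\eps\eta_\beta$ is subharmonic on $\Sigma$, has nonpositive upper limits at every boundary point, and tends to $-\infty$ as $|\Im(z)|\to\infty$ uniformly for $\Re(z)\in[0,1]$ (because $\beta>a$); the maximum principle for subharmonic functions on the rectangles $\{z\in\Sigma\,|\,|\Im(z)|<R\}$, followed by $R\to\infty$, gives $v-\eps\eta_\beta\le 0$ on $\Sigma$, and then $\eps\downarrow 0$ yields $v\le 0$, that is, $u\le h$ on $\Sigma$, which is \eqref{2.18}.

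For the implication \eqref{2.19}$\,\Rightarrow\,$\eqref{2.21}, substitute $\ln|\phi(iy)|\le\ln(C_0)$ and $\ln|\phi(1+iy)|\le\ln(C_1)$ into \eqref{2.18}; the two resulting kernel integrals are the harmonic measures in $\Sigma$ of the boundary lines $\Re(z)=0$ and $\Re(z)=1$, hence equal $1-\Re(z)$ and $\Re(z)$, respectively (these are the bounded harmonic functions on $\Sigma$ with the correct $\{0,1\}$-valued boundary data; the normalization is confirmed by a one-line computation at $z=1/2$), so $\ln|\phi(z)|\le(1-\Re(z))\ln(C_0)+\Re(z)\ln(C_1)$, i.e., \eqref{2.21}. (There is also the classical direct route to \eqref{2.21}: the function $\Phi(z):=\phi(z)\,C_0^{z-1}C_1^{-z}$ is analytic on $\Sigma$, continuous on $\ol\Sigma$, inherits a bound of the form \eqref{2.17} since $|C_0^z|=C_0^{\Re(z)}$ and $|C_1^z|=C_1^{\Re(z)}$ are bounded on $\ol\Sigma$, and has $|\Phi|\le 1$ on $\partial\Sigma$, so Phragm\'en--Lindel\"of for the strip forces $|\Phi|\le 1$ on $\Sigma$.) The hard part is the construction and analysis of the majorant $h$: verifying that the Poisson-type integral converges to a finite harmonic function with the right boundary values and $\Oh(e^{a|y|})$ growth, and in particular handling boundary zeros of $\phi$ by the truncation argument above; everything else is the standard Phragm\'en--Lindel\"of mechanism for a strip, for which $a<\pi$ is the sharp threshold permitting the comparison exponent $\beta\in(a,\pi)$.
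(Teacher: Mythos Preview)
The paper does not supply a proof of this theorem; it is quoted from \cite{Hi52} (see also \cite[Sect.~III.13]{GK69}) as a black-box tool, so there is no in-paper argument to compare against. Your route---identifying the right-hand side of \eqref{2.18} as the Poisson integral $h$ of $\ln|\phi|$ over $\partial\Sigma$ (via the conformal map $w=e^{i\pi z}$ to the upper half-plane) and then running Phragm\'en--Lindel\"of on $u-h$ with the comparison function $\eta_\beta(z)=\Re\cos(\beta(z-1/2))$, $a<\beta<\pi$---is the classical one, and your derivation of \eqref{2.21} from \eqref{2.18} via the harmonic-measure identities $\int = 1-\Re(z)$ and $\int = \Re(z)$ is correct.

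There is one ordering issue worth tightening. You assert $|h(x+iy)|=\Oh(e^{a|y|})$ and then apply Phragm\'en--Lindel\"of directly to $v=u-h$; but only the \emph{upper} bound on $h$ follows from \eqref{2.17}, while a lower bound on $h$ (equivalently, the needed upper bound on $v=u-h$) does not, since the negative part of $\ln|\phi|$ on $\partial\Sigma$ is uncontrolled. The remedy is already implicit in your truncation step: run the Phragm\'en--Lindel\"of argument on $v_M:=u-h_M$, where $h_M$ is the Poisson integral of $\max(\ln|\phi|,-M)$. Then $h_M\ge -M$ gives $v_M\le u+M=\Oh(e^{a|y|})$, the boundary $\limsup$ of $v_M$ is $\le 0$ (upper semicontinuity of $u$ together with the standard boundary behavior of the Poisson integral of the continuous function $\max(\ln|\phi|,-M)$), and one concludes $u\le h_M$ on $\Sigma$ for every $M$. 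Letting $M\to\infty$ by monotone convergence yields $u\le h$, i.e., \eqref{2.18}. With this reordering the argument is complete.
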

%%%%%%%%%%%%%

For a recent detailed exposition of such results we refer to
\cite[Sects.\ 1.3.2, 1.3.3]{Gr08}. A classical application of Theorem\ \ref{t2.4} to linear 
operators appeared in \cite{St56} (see also \cite[Sect.~4.3]{BS88}).   

The growth condition \eqref{2.17} is of course familiar from
Phragmen--Lindel\"of-type arguments applied to the strip $\Sigma$ (see, e.g.,
\cite[Theorem\ 12.9]{Ru87}).

\medskip

In the sequel, complex powers $T^z$, $z\in\ \ol \Sigma$, of a self-adjoint operator $T$
in $\cH$, with $T^{-1} \in \cB(\cH)$, are defined in terms of the spectral representation of $T$,
\begin{equation}
T = \int_{\sigma(T)} \lambda \, dE_T(\lambda),
\end{equation}
with $\{E_T(\lambda)\}_{\lambda \in \bbR}$
denoting the family of spectral projections of $T$, as follows: Since by hypothesis,
$(-\varepsilon, \varepsilon) \cap \sigma(T) = \emptyset$ for some
$\varepsilon > 0$, one defines
\begin{equation}
T^z = \int_{\sigma(T)} \lambda^z \, dE_T(\lambda), \quad z \in \ol \Sigma,
\end{equation}
where
\begin{equation}
\lambda^z = \lambda^{\Re(z)} e^{i \Im(z) \ln(|\lambda|)} \big[\theta(\lambda) + e^{- \pi \Im(z)}
\theta (-\lambda) \big], \quad \lambda \in \bbR\backslash\{0\}, \; z \in \ol \Sigma,
\end{equation}
and
\begin{equation}
\theta(x) = \begin{cases} 1, & x > 0, \\ 0, & x < 0. \end{cases}
\end{equation}
Consequently, one obtains the estimate
\begin{equation}
\big\|T^{iy}\big\|_{\cB(\cH)} \leq \max \big(1, e^{- \pi y}\big) \leq e^{\pi |y|}, \quad y \in \bbR,  \lb{2.23}
\end{equation}
and
\begin{equation}
\text{if $T \geq \varepsilon I_{\cH}$, for some $\varepsilon > 0$, then $T^{iy}$ is unitary, } \, 
\big\|T^{iy}\big\|_{\cB(\cH)} = 1, \quad y \in \bbR.    \lb{2.24}
\end{equation}

%%%%%%%%%%%%%
\begin{theorem}   \lb{t2.5}
Assume Hypothesis \ref{h2.1} and suppose that $TST^{-1}\in\cB(\cH)$ as well as
$TS^*T^{-1}\in\cB(\cH)$. Then $S\in\cB(\cH)$ $($and hence $S^* \in\cB(\cH)$$)$ and
\begin{equation}
\|S \|_{\cB(\cH)} = \|S^* \|_{\cB(\cH)}
\leq \big\|TST^{-1}\big\|_{\cB(\cH)}^{1/2} \, \big\|TS^*T^{-1}\big\|_{\cB(\cH)}^{1/2}
\begin{cases} e^{2 \pi}, \\ 1, \text{ if $T \geq 0$.}
\end{cases}     \lb{2.25}
\end{equation}
\end{theorem}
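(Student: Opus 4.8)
The plan is to apply the Hadamard three-lines theorem (Theorem \ref{t2.4}) to a suitably normalized analytic family of operator-valued functions built from $S$ sandwiched between imaginary powers of $T$. Concretely, for fixed vectors $f, g \in \cH$ in appropriate dense domains, I would consider the function
\begin{equation*}
\phi(z) = \big(g, T^{-z} S T^{-1+z} f\big)_{\cH}, \quad z \in \ol\Sigma,
\end{equation*}
or rather its rescaled version $\phi(z) = \big(g, T^{z} \big(\ol{T^{-1}ST}\big)^{?}\cdots\big)$; more precisely, observe that at $z = iy$ one has (formally) $T^{-iy} S T^{-1+iy} = T^{-iy}(ST^{-1})T^{iy}$, and at $z = 1+iy$ one has $T^{-1-iy}ST^{iy} = T^{-1-iy}(ST^{-1})\cdot T^{1}T^{iy}$, which after using $(ST^{-1})^* $ relations from Theorem \ref{t2.3} should be rewritten as $\big(T S^* T^{-1}\big)^*$-type expressions so that the boundary value on the line $\Re(z)=1$ is controlled by $\|TS^*T^{-1}\|_{\cB(\cH)}$. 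Since the hypothesis of Theorem \ref{t2.5} is precisely that both $TST^{-1}$ and $TS^*T^{-1}$ are bounded, the two boundary lines $\Re(z)=0$ and $\Re(z)=1$ give, using the estimate \eqref{2.23} for $\|T^{iy}\|$, the bounds
\begin{equation*}
|\phi(iy)| \leq e^{2\pi|y|}\,\|ST^{-1}\|_{\cB(\cH)}\,\|f\|\,\|g\|, \quad |\phi(1+iy)| \leq e^{2\pi|y|}\,\|TS^*T^{-1}\|_{\cB(\cH)}\,\|f\|\,\|g\|,
\end{equation*}
and one needs to be slightly careful because of the factor $e^{2\pi|y|}$ — this is not a uniform bound on the lines, so a direct appeal to \eqref{2.19}--\eqref{2.21} fails. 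The remedy, following Kato's approach, is to insert a Gaussian regularizing factor $e^{\delta z^2}$ (or $e^{\delta(z^2-z)}$) with $\delta>0$, apply the three-lines theorem to $e^{\delta z^2}\phi(z)$, then let $\delta \downarrow 0$; alternatively one uses the full integral formula \eqref{2.18} with $a = 2\pi$ — wait, but \eqref{2.17} requires $a\in[0,\pi)$, so the Gaussian trick is the right route, or one reworks the normalization so that the effective exponential growth rate on the boundary lines is strictly below $\pi$.

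Here is the cleaner way I would actually organize it. Set $z = x + iy$ and write the interpolating family as $\Phi(z) = \big(T^{-z} S T^{-1+z}\big)$ acting between the dense set $\dom(T^2)$ and $\cH$; Theorem \ref{t2.3} (applied with the roles adjusted, or its obvious generalization) shows this is densely defined and that its adjoint relations identify the boundary behavior. For the scalar function $\phi(z) = (g, \Phi(z)f)_{\cH}$ with $f \in \dom(T^2)$, $g\in\cH$ of norm one, one checks analyticity in $\Sigma$ and continuity on $\ol\Sigma$ (using the spectral theorem and dominated convergence), and the growth bound $|\phi(z)| \leq C e^{a|\Im z|}$ with $a = 2\pi$ follows from \eqref{2.23} applied to both factors $T^{-z}$ and $T^{-1+z}$. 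Since $2\pi \notin [0,\pi)$, I would introduce $\phi_\delta(z) = e^{\delta(z^2 - z)}\phi(z)$: the factor $e^{\delta(z^2-z)} = e^{\delta(x^2 - x - y^2) + i\delta(2xy - y)}$ has modulus $e^{\delta(x^2-x-y^2)}$, which decays like $e^{-\delta y^2}$ as $|y|\to\infty$, killing the $e^{2\pi|y|}$ growth so that \eqref{2.17} holds for $\phi_\delta$ with any $a\in(0,\pi)$, say $a = \pi/2$. Then \eqref{2.21} applied to $\phi_\delta$ yields, on using $|e^{\delta(x^2-x)}| \le 1$ for $x\in[0,1]$ and the boundary bounds (where the factor is $e^{-\delta y^2}\le 1$),
\begin{equation*}
|\phi(x)| = \lim_{\delta\downarrow 0}|\phi_\delta(x)| \leq \big\|ST^{-1}\big\|_{\cB(\cH)}^{1-x}\,\big\|TS^*T^{-1}\big\|_{\cB(\cH)}^{x}\, e^{0}, \quad x\in[0,1];
\end{equation*}
here the exponent $2\pi$ on the boundary lines produces, after the three-lines interpolation, the geometric-mean bound times $e^{2\pi\cdot\sin(\pi x)\cdot(\text{something})}$ — actually, on the real axis the $e^{2\pi|y|}$ boundary factors contribute nothing after the limit because they sit at $y$-values that are integrated against a kernel, and what survives is the constant $e^{2\pi}$ of the displayed inequality once one tracks the normalization carefully.

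Taking $x = 1/2$ gives $\|T^{-1/2}ST^{-1/2}\|_{\cB(\cH)} \leq e^{\pi\cdot\text{const}}\,\|ST^{-1}\|^{1/2}\|TS^*T^{-1}\|^{1/2}$. But the real content of Theorem \ref{t2.5} is the \emph{unexpected} conclusion that $S$ itself (not just a sandwiched version) is bounded; one recovers this by noting that $S = T^{1/2}\big(T^{-1/2}ST^{-1/2}\big)T^{1/2}$ is the wrong move (it reintroduces unbounded factors), so instead I would run the interpolation directly on $\Psi(z) = T^{z-1}ST^{-z}$ hmm — let me reconsider: the point is to interpolate between the endpoint operators $T^{0}S T^{-1}\cdot T^{0}$... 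Actually the correct endpoint family is $\Psi(z) = T^{-(1-z)}\,S\,T^{-z}$ wait. Let me instead just say: one interpolates the family $z\mapsto T^{z}\,\big[\text{bounded operator}\big]\,T^{-z}$ whose value at $z=0$ is $S$ (no, $S$ is unbounded)... The resolution in the paper is surely that one interpolates $z \mapsto T^{1-z}\,\ol{T^{-1}ST}\, $ hmm. The genuinely delicate point — and the one I would flag as the main obstacle — is setting up the analytic family so that its value at a single interior point of the strip equals exactly $S$ (or an operator whose boundedness forces that of $S$), while its two boundary values are exactly $TST^{-1}$-like and $TS^*T^{-1}$-like; this requires the generalized Heinz inequality / the identity $\ol{T^{-1}ST} = (TS^*T^{-1})^*$ from Theorem \ref{t2.3}(iii), together with the observation that $T^{1/2}ST^{1/2}\cdot$ no — rather that $S = \ol{T^{1-z}(T^{z-1}ST^{-z})T^{z}}$ evaluated appropriately. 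Concretely: the family $\Phi(z) = T^{z}(T^{-1}ST)^{?}$... I would set $\Phi(z) = T^{z-1} S T^{-z}$ on $\dom(T)$, so $\Phi(1) = S T^{-1}$, $\Phi(0) = T^{-1}S$, neither of which is $S$; to get $S$ one must instead run the argument for $\wti\Phi(z) = T^{z}ST^{-z}$ with $\wti\Phi(0) = \wti\Phi(1) = S$ formally but with boundary \emph{control} coming from rewriting $T^{iy}ST^{-iy} = T^{iy}(ST^{-1})T^{1-iy}$ — and this last rewriting, showing $\|T^{iy}ST^{-iy}\|\le \|T^{iy}\|\,\|ST^{-1}\|\,\|T^{1-iy}\|$ and the analogous bound on the other line via $S^*$, is where the $e^{2\pi}$ (versus $1$ in the self-adjoint-positive case, where $\|T^{iy}\|=1$ by \eqref{2.24}) enters. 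So after the Gaussian regularization and passage to the limit, evaluating at the interior point where the two boundary exponents balance yields $\|S\|_{\cB(\cH)} \le \|TST^{-1}\|^{1/2}\|TS^*T^{-1}\|^{1/2} e^{2\pi}$, and $\|S\|=\|S^*\|$ holds since $S$ closed and bounded implies $S\in\cB(\cH)$ with $\|S\|=\|S^*\|$. I expect the main obstacle to be precisely the bookkeeping that links the interior-point value of the regularized family back to $S$ and tracks the exact constant $e^{2\pi}$ (and its collapse to $1$ under $T\ge 0$), rather than any deep analytic difficulty.
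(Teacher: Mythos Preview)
Your overall strategy---Hadamard three-lines plus a Gaussian damping factor $e^{kz(z-1)}$, with the adjoint identity $\ol{T^{-1}ST}=(TS^*T^{-1})^*$ from Theorem~\ref{t2.3} supplying one of the boundary bounds---is exactly the right one, and matches the paper. But there is a genuine gap: you never write down an analytic family that actually works, and both families you try fail.

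Your first family $\phi(z)=(g,T^{-z}ST^{-1+z}f)$ is the one for Theorem~\ref{t2.7}; at $z=1/2$ it yields $T^{-1/2}ST^{-1/2}$, not $S$. Your second family $\wti\Phi(z)=T^{z}ST^{-z}$ puts $S$ at the \emph{boundary} point $z=0$, not at an interior point, so three-lines gives you nothing there; and your proposed boundary rewriting $T^{iy}ST^{-iy}=T^{iy}(ST^{-1})T^{1-iy}$ is useless because $T^{1-iy}$ is unbounded. The missing idea is to \emph{double} the exponent: take
\[
\varphi_k(z)=e^{kz(z-1)}\big(T^2f,\,T^{2z-3}ST^{-1-2z}\,T^2g\big)_{\cH},\qquad f,g\in\dom(T^2),
\]
i.e.\ (formally) the family $z\mapsto T^{2z-1}ST^{1-2z}$. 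Then $z=1/2$ gives exactly $(f,Sg)_{\cH}$ at an interior point, while $z=iy$ gives $\big(T^{-2iy}f,\,[\,\ol{T^{-1}ST}\,]T^{-2iy}g\big)$ (controlled by $\|TS^*T^{-1}\|$ via Theorem~\ref{t2.3}) and $z=1+iy$ gives the $TST^{-1}$ bound. The two factors $T^{\pm 2iy}$ contribute $e^{4\pi|y|}$ via \eqref{2.23}, so the Gaussian gives boundary constants $e^{4\pi^2/k}$.

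One more correction: you do \emph{not} let $\delta\downarrow 0$ at the end (that would send the boundary constant $e^{4\pi^2/k}\to\infty$). Instead, at $z=1/2$ the Gaussian contributes an extra $e^{k/4}$ on the left, so one has
$|(f,Sg)|\le e^{k/4+4\pi^2/k}\|TST^{-1}\|^{1/2}\|TS^*T^{-1}\|^{1/2}\|f\|\|g\|$,
and minimizing $k/4+4\pi^2/k$ over $k>0$ (at $k=4\pi$) gives the stated $e^{2\pi}$. For $T\ge0$ one takes $k=0$ and uses \eqref{2.24} instead of \eqref{2.23}, giving the constant $1$.
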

%%%%%%%%%%%%%
\begin{proof}
Introducing
\begin{equation}
\varphi_k (z) = e^{kz(z-1)}\big(T^2 f, T^{2z-3}ST^{-1-2z} T^2 g\big)_{\cH},
\quad f, g \in \dom\big(T^2\big), \; z \in \ol \Sigma, \; k \in [0,\infty),   \lb{2.26}
\end{equation}
one infers that $\varphi_k$ is analytic on $\Sigma$. (We note that the idea to exploit the factor
$e^{kz(z-1)}$, $k>0$, can already be found in the proof of \cite[Theorem\ 6]{Ka62}. This factor is
used in \eqref{2.27}--\eqref{2.28a} below to neutralize factors of the type $e^{4\pi |y|}$ and
$e^{4\pi |\Im(z)|}$.)

In the following we focus on the general case where $T$ is self-adjoint and $k>0$; in this case we will employ the bound \eqref{2.23}.

Assuming $k > 0$, \eqref{2.23} yields the estimates
\begin{align}
|\varphi_k (iy)| &= e^{- k y^2} \big|\big(f, T^{2iy-1}ST^{1-2iy}g\big)_{\cH}\big|
= e^{- k y^2} \big|\big(T^{-2iy}f, [T^{-1}ST] T^{-2iy}g\big)_{\cH}\big|    \no \\
& \leq e^{- k y^2 + 4 \pi |y|} \big\|\ol{T^{-1}ST}\big\|_{\cB(\cH)} \, \|f\|_{\cH} \, \|g\|_{\cH}    \no \\
& = e^{- k y^2 + 4 \pi |y|} \big\|TS^*T^{-1}\big\|_{\cB(\cH)} \, \|f\|_{\cH} \, \|g\|_{\cH},    \no \\
& \leq e^{k^{-1} 4 \pi^2} \big\|TS^*T^{-1}\big\|_{\cB(\cH)} \, \|f\|_{\cH} \, \|g\|_{\cH},
\quad f, g \in \dom\big(T^2\big), \; y\in\bbR,  \lb{2.27}
\end{align}
using \eqref{2.4}, and similarly,
\begin{align}
& |\varphi_k (1+iy)| =  e^{- k y^2} \big|\big(f, T^{1+2iy}ST^{-1-2iy}g\big)_{\cH}\big|    \no \\
& \quad =  e^{- k y^2} \big|\big(T^{-2iy}f, [TST^{-1}] T^{-2iy}g\big)_{\cH}\big|
\no \\
& \quad \leq e^{k^{-1} 4 \pi^2} \big\|TST^{-1}\big\|_{\cB(\cH)} \, \|f\|_{\cH} \, \|g\|_{\cH}, \quad
 f, g \in \dom\big(T^2\big), \; y\in\bbR.     \lb{2.28}
\end{align}
In addition, one obtains
\begin{align}
|\varphi_k (z)| &= e^{- k |\Im(z)|^2 + k \Re(z)[\Re(z) - 1]}
\big|\big(T^2 f,T^{2z-3} S T^{-1-2z} T^2 g\big)_{\cH}\big|  \no \\
& \leq e^{- k |\Im(z)|^2 + k \Re(z)[\Re(z) - 1] + 4 \pi |\Im(z)|}     \no \\
& \quad \times \big\|T^{2\Re(z)-3}\big\|_{\cB(\cH)} \,
\big\|ST^{-1}\big\|_{\cB(\cH)} \,
\big\|T^{-2\Re(z)}\big\|_{\cB(\cH)} \, \big\|T^2 f\big\|_{\cH} \, \big\|T^2 g\big\|_{\cH}   \no \\
& \leq e^{k^{-1}4\pi^2 + k \Re(z)[\Re(z) - 1]}      \no \\
& \quad \times \big\|T^{2\Re(z)-3}\big\|_{\cB(\cH)} \,
\big\|ST^{-1}\big\|_{\cB(\cH)} \,
\big\|T^{-2\Re(z)}\big\|_{\cB(\cH)} \, \big\|T^2 f\big\|_{\cH} \, \big\|T^2 g\big\|_{\cH}   \no \\
&\leq C, \quad f, g \in \dom\big(T^2\big), \; z\in \ol \Sigma,
\lb{2.28a}
\end{align} 
for some finite constant $C=C(f,g,S,T)>0$, independent of $z\in \ol \Sigma$.

Applying the Hadamard three-lines estimate \eqref{2.21} to $\varphi(\cdot)$ then yields
\begin{align}
\begin{split}
|\varphi_k (z)| \leq e^{k^{-1} 4 \pi^2} \big\|TS^*T^{-1}\big\|_{\cB(\cH)}^{1-\Re(z)} \, \big\|TST^{-1}\big\|_{\cB(\cH)}^{\Re(z)}
\, \|f\|_{\cH} \, \|g\|_{\cH},&  \\
f, g \in \dom\big(T^2\big), \; z\in\ol \Sigma.&  \lb{2.29}
\end{split}
\end{align}
Taking $z=1/2$ in \eqref{2.29} implies
\begin{align}
\begin{split} 
|(f, Sg)_{\cH}| \leq e^{4^{-1} k + k^{-1} 4 \pi^2} \big\|TS^*T^{-1}\big\|_{\cB(\cH)}^{1/2} \,
\big\|TST^{-1}\big\|_{\cB(\cH)}^{1/2} \, \|f\|_{\cH} \, \|g\|_{\cH},&   \\
f, g \in \dom\big(T^2\big).&  \lb{2.29A}
\end{split} 
\end{align}
Optimizing with respect to $k > 0$ yields 
\begin{equation}
|(f, Sg)_{\cH}| \leq e^{2 \pi} \big\|TS^*T^{-1}\big\|_{\cB(\cH)}^{1/2} \,
\big\|TST^{-1}\big\|_{\cB(\cH)}^{1/2} \, \|f\|_{\cH} \, \|g\|_{\cH},
\quad f, g \in \dom\big(T^2\big).  \lb{2.29B}
\end{equation} 
Since $\dom\big(T^2\big)$ is dense in $\cH$, this yields that $S$ is a bounded operator in $\cH$.
Employing that $S$ is closed in $\cH$ finally proves $S\in\cB(\cH)$  and hence the first estimate in \eqref{2.25}.

If in addition, $T \geq 0$, we choose $k=0$ in \eqref{2.26} and then rely on equality \eqref{2.24} (as opposed to \eqref{2.23}),
which slightly simplifies the estimates \eqref{2.27}--\eqref{2.29B}, implying the second inequality in \eqref{2.25}.
\end{proof}
%%%%%%%%%%%%%
\begin{remark} \lb{r2.5a}
In the special case where $T$ is self-adjoint and $T \geq \varepsilon I_{\cH}$ for some 
$\varepsilon > 0$, there exists an alternative way of deriving the bound \eqref{2.25} by means of Proposition~A.1\,(2) proved by Lesch \cite{Le05} in the context of closed, symmetric operators $S$. 
Indeed, an application of \cite[Proposition~A.1\,(2)]{Le05} with $S$ replaced by the symmetric, in fact, self-adjoint, $S^* S$ yields 
\begin{align}
\begin{split}  
\|S^* S \|_{\cB(\cH)} & \leq \big\|T S^* S T^{-1}\big\|_{\cB(\cH)} 
= \big\|\big(T S^* T^{-1}\big) \big(T S T^{-1}\big)\big\|_{\cB(\cH)}    \\ 
& \leq \big\|T S^* T^{-1}\big\|_{\cB(\cH)} \big\|T S T^{-1}\big\|_{\cB(\cH)}. 
\end{split} 
\end{align} 
Thus, $S^* S$ is bounded, so both $S$ and $S^*$ are bounded, and hence, 
\begin{equation} 
\|S\|_{\cB(\cH)} = \|S^*\|_{\cB(\cH)} = \| (S^* S)\|^{1/2}_{\cB(\cH)} 
\leq \big\|T S T^{-1}\big\|^{1/2}_{\cB(\cH)} \big\|T S^* T^{-1}\big\|^{1/2}_{\cB(\cH)}.
\end{equation} 
Thus, in this special case one needs no additional arguments to prove Theorem \ref{t2.5}. However,
this type of argument does not apply to the remaining statements in this section.
\end{remark}
%%%%%%%%%%%%%
Theorem \ref{t2.5} allows us to derive the following result.
\begin{theorem}  \lb{t2.6}
In addition to Hypothesis \ref{h2.1} suppose that $T \geq 0$. Then
\begin{equation}
T^{-1/2}ST^{-1/2}\in\cB(\cH), \quad T^{-1/2}S^*T^{-1/2}\in\cB(\cH),    \lb{2.29a}
\end{equation}
and
\begin{equation}
\big(T^{-1/2}ST^{-1/2}\big)^* = T^{-1/2}S^*T^{-1/2}, \quad
\big(T^{-1/2}S^*T^{-1/2}\big)^* = T^{-1/2}ST^{-1/2}.      \lb{2.30}
\end{equation}
Moreover,
\begin{equation}
\big\|T^{-1/2}ST^{-1/2}\big\|_{\cB(\cH)} = \big\|T^{-1/2}S^*T^{-1/2} \big\|_{\cB(\cH)}
\leq \big\|ST^{-1}\big\|_{\cB(\cH)}^{1/2} \, \big\|S^*T^{-1}\big\|_{\cB(\cH)}^{1/2}.    \lb{2.31}
\end{equation}
\end{theorem}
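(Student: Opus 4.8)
The plan is to reduce everything to Theorem \ref{t2.5} by an appropriate substitution. First I would introduce the auxiliary operator $S_0 = T^{-1/2} S T^{-1/2}$ (maximally defined in the sense of Remark \ref{r2.2}\,$(i)$), together with $\ti T = T^{1/2}$, which is self-adjoint, satisfies $\ti T \geq 0$, and has $\ti T^{-1} = T^{-1/2} \in \cB(\cH)$. The goal is to verify that the pair $(S_0, \ti T)$ satisfies Hypothesis \ref{h2.1} and, in addition, that $\ti T S_0 \ti T^{-1}$ and $\ti T S_0^* \ti T^{-1}$ lie in $\cB(\cH)$, so that Theorem \ref{t2.5} applies to this pair. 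Once that is done, one computes $\ti T S_0 \ti T^{-1} = T^{1/2} (T^{-1/2} S T^{-1/2}) T^{-1/2} = T^{-1} S T$ wait --- more carefully, $\ti T S_0 \ti T^{-1}$ should be shown to coincide (as a closed operator, after taking closures) with a conjugate of $ST^{-1}$; the cleaner route is to observe $\ti T S_0 \ti T^{-1} \subseteq S T^{-1}$ type relations and to identify the bounded closures using Theorem \ref{t2.3}. Then the conclusion $\|S_0\|_{\cB(\cH)} = \|S_0^*\|_{\cB(\cH)} \leq \|\ti T S_0 \ti T^{-1}\|^{1/2} \|\ti T S_0^* \ti T^{-1}\|^{1/2}$ delivered by Theorem \ref{t2.5} becomes exactly \eqref{2.31}, since $T \geq 0$ makes the relevant constant equal to $1$.

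Next I would establish the domain and adjoint statements \eqref{2.29a} and \eqref{2.30}. For this one needs that $S_0 = T^{-1/2} S T^{-1/2}$ is densely defined: on $\dom(T^{1/2})$ one has $T^{-1/2} f \in \dom(T^{1/2}) \subseteq \dom(T)$ wait, that inclusion is false in general --- rather, for $f \in \ran(T^{1/2}) \cap \dom(\cdot)$ one argues as in the proof of Theorem \ref{t2.3}\,$(i)$ that $\dom(S_0) \supseteq \dom(T)$ since for $g \in \dom(T)$, $T^{-1/2} g \in \dom(T^{1/2}) \subseteq \dom(\ti T)$ and then one must check $T^{-1/2} g \in \dom(S)$, which follows from $\dom(S) \supseteq \dom(T) \supseteq \ran(T^{-1/2}|_{\dom(T)})$ --- cleanest is: $\dom(S_0) \supseteq \dom(T)$ because $T^{-1/2}[\dom(T)] = \dom(T^{3/2}) \subseteq \dom(T)\subseteq \dom(S)$, and similarly for $S_0^*$. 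Then $S_0$ is densely defined and closable (being bounded on a dense set by the estimate just proved), and its closure lies in $\cB(\cH)$; the adjoint identity \eqref{2.30} follows by first checking $\big(T^{-1/2} S T^{-1/2}\big)^* \supseteq T^{-1/2} S^* T^{-1/2}$ via \eqref{2.0b}, then promoting this to equality exactly as in the argument for \eqref{2.2} in Theorem \ref{t2.3}\,$(ii)$ (using a core argument and the symmetry of the situation under $S \leftrightarrow S^*$), or alternatively by noting that a bounded densely defined operator and its adjoint candidate both extend the respective closed operators and invoking uniqueness.

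The main obstacle I expect is the bookkeeping of the maximally-defined operator products and the precise identification of the closures $\ol{\ti T S_0 \ti T^{-1}}$ with $\ol{T^{-1}[T^{1/2} S T^{-1/2}]}$-type expressions, and ultimately with $ST^{-1}$ and $S^* T^{-1}$: one must be careful that $T^{1/2}(T^{-1/2} S T^{-1/2}) T^{-1/2}$, interpreted maximally, need not literally equal $ST^{-1}$, but the two agree on the dense set $\dom(T^2)$ (or $\dom(T^{3/2})$), and both are bounded, hence have the same closure $\ol{ST^{-1}} = ST^{-1} \in \cB(\cH)$ by \eqref{2.0}. Assembling these identifications so that Theorem \ref{t2.5}'s hypotheses are genuinely met --- in particular that $\ti T S_0 \ti T^{-1}$ and $\ti T S_0^* \ti T^{-1}$ are bounded, which is where \eqref{2.0} enters --- is the crux; once the substitution is set up correctly, \eqref{2.31} is immediate from \eqref{2.25} with the constant $1$, and \eqref{2.29a}, \eqref{2.30} follow from the closedness/density considerations above.
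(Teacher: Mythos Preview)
Your approach is essentially the same as the paper's: introduce $\hat S = T^{-1/2} S T^{-1/2}$ and $\hat T = T^{1/2}$, verify the hypotheses of Theorem~\ref{t2.5} for the pair $(\hat S,\hat T)$ (in particular, $\hat T \hat S \hat T^{-1} = S T^{-1}$ and $\hat T (\hat S)^* \hat T^{-1} = S^* T^{-1}$ via \eqref{2.0} and Remark~\ref{r2.2}), read off \eqref{2.31} from \eqref{2.25} with constant $1$, and then obtain \eqref{2.30} by the symmetry $S \leftrightarrow S^*$. The paper streamlines the domain bookkeeping slightly---it notes directly that $\dom(\hat S) \supseteq \dom(T^{1/2}) = \dom(\hat T)$ (not just $\dom(T)$) and that $(\hat S)^* \supseteq T^{-1/2} S^* T^{-1/2}$ by \eqref{2.0b}, avoiding your detour through $\dom(T^{3/2})$---but the substance is identical.
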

%%%%%%%%%%%%%
\begin{proof}
The combined assumptions on $T$ actually yield $T \geq \varepsilon I_{\cH}$ for some
$\varepsilon > 0$. (The condition $T \geq 0$ has inadvertently been omitted in
\cite[Theorem\ 4.1]{GLMST11} and \cite[Proposition~A.1\,(3)]{Le05}.) Introduce the operators
$\hatt S$ and $\hatt T$ in $\cH$ by
\begin{equation}
\hatt S = T^{-1/2} S T^{-1/2}, \quad \hatt T = T^{1/2}.    \lb{2.32}
\end{equation}
Then $\hatt T$ is self-adjoint and
$\dom\big(\hatt S\big) \supseteq \dom\big(T^{1/2}\big)$, that is,
$\dom\big(\hatt S\big) \supseteq \dom\big(\hatt T\big)$ yields that $\hatt S$
is densely defined. Next, we note that by Remark \ref{r2.2}\,$(ii)$,
\begin{equation}
\big(\hatt S\big)^* = \big(T^{-1/2} \big[S T^{-1/2}\big]\big)^* =
\big[S T^{-1/2}\big]^* T^{-1/2} \supseteq T^{-1/2} S^* T^{-1/2},    \lb{2.33}
\end{equation}
and hence $\dom\big(\big(\hatt S\big)^*\big) \supseteq \dom\big(\hatt T\big)$.
Moreover, since
\begin{equation}
\hatt T \hatt S \big(\hatt T\big)^{-1} = S T^{-1} \in \cB(\cH), \quad
\hatt T \big(\hatt S\big)^* \big(\hatt T\big)^{-1} \supseteq S^* T^{-1} \in \cB(\cH),
\lb{2.34}
\end{equation}
by Hypothesis \ref{h2.1} (resp., \eqref{2.0}), one also infers
\begin{equation}
\hatt T \big(\hatt S\big)^* \big(\hatt T\big)^{-1} = S^* T^{-1} \in \cB(\cH).
\lb{2.35}
\end{equation}
Thus, Theorem \ref{t2.5} applies to $\hatt S$ and $\hatt T$ and hence
$\hatt S, \, \big(\hatt S\big)^* \in \cB(\cH)$, as well as
\begin{align}
\big\|\hatt S\big\|_{\cB(\cH)} = \big\|\big(\hatt S\big)^*\big\|_{\cB(\cH)}
& \leq \big\|\hatt T \hatt S \big(\hatt T\big)^{-1}\big\|_{\cB(\cH)}^{1/2} \,
\big\|\hatt T \big(\hatt S\big)^* \big(\hatt T\big)^{-1}\big\|_{\cB(\cH)}^{1/2}   \no \\
& = \big\|S T^{-1}\big\|_{\cB(\cH)}^{1/2} \, \big\|S^* T^{-1}\big\|_{\cB(\cH)}^{1/2}.
\lb{2.36}
\end{align}
Since our hypotheses are symmetric with respect to $S$ and $S^*$,
interchanging $S$ and $S^*$, repeating \eqref{2.32}--\eqref{2.36} with $\hatt S$
replaced by
\begin{equation}
\wti S = T^{-1/2} S^* T^{-1/2},   \lb{2.37}
\end{equation}
then also yields $\big(\wti S\big)^* \supseteq T^{-1/2} S T^{-1/2} = \hatt S$. Since
$\hatt S \in\cB(\cH)$, one concludes that $\big(\wti S\big)^* = \hatt S$. Applying
Theorem \ref{t2.5} to $\wti S$ and $\hatt T$ then yields $\wti S \in \cB(\cH)$ and hence 
also $\big(\hatt S\big)^* = \wti S$, completing the proof.
\end{proof}
%%%%%%%%%%%%%

In the special case where $S$ is symmetric in $\cH$, $S \subseteq S^*$, and $T \geq 0$ 
(actually, $T \geq \varepsilon I_{\cH}$ for some $\varepsilon > 0$ as also the condition 
$T^{-1} \in \cB(\cH)$ is involved), 
nearly all the results of this section up to now (as well as the basic strategy of proofs employed), appeared in Lesch \cite[Appendix A]{Le05}. We emphasize, however, that some of these results, especially, Theorem \ref{t2.6}, were previously derived in 1988 by Huang 
\cite[Lemma~2.1.(b)]{Hu88}. In fact, combining the spectral theorem for $T$ and the three-lines theorem, Huang arrives at an extension of Theorem \ref{t2.6} involving fractional powers of $T^{\alpha}$ $\alpha \in [1/2,1]$ on the right-hand side of \eqref{2.31}. 
  
Next, we recall the generalized polar decomposition for densely defined closed operators $S$ in $\cH$ derived in \cite{GMMN09},
\begin{equation}
S = |S^*|^\alpha U_S |S|^{1-\alpha}, \quad \alpha \in [0,1],   \lb{2.41}
\end{equation}
where $U_S$ denotes the partial isometry in $\cH$ associated with the standard
polar decomposition $S = U_S |S|$, and $|S|=(S^*S)^{1/2}$ (and we interpret
$|S|^0 = I_{\cH}$ in this particular context).

We will employ \eqref{2.41} (and its analog for $S^*$) to prove the following result:

%%%%%%%%%%%%%
\begin{theorem}   \lb{t2.7}
Assume Hypothesis \ref{h2.1}. Then $T^{-z}ST^{-1+z}$, $z\in\ol\Sigma$, defined on $\dom(T)$,
is closable in $\cH$, and
\begin{align}
\ol{T^{-z}ST^{-1+z}} &= T^{-i \Im(z)} \big[|S^*|^{\Re(z)} T^{- \Re(z)}\big]^*
U_S    \no \\
& \quad \times |S|^{1-\Re(z)} T^{-1+\Re(z)} T^{i \Im(z)} \in \cB(\cH),
\quad z \in \ol \Sigma.    \lb{2.42}
\end{align}
In addition, given $ k \in (0,\infty)$, one obtains
\begin{align}
\big\|\ol{T^{-z}ST^{-1+z}}\big\|_{\cB(\cH)}
& \leq \big\|ST^{-1}\big\|_{\cB(\cH)}^{1-\Re(z)} \, \big\|S^*T^{-1}\big\|_{\cB(\cH)}^{\Re(z)}    \no \\
& \quad \times \begin{cases} e^{k |\Im(z)|^2 + k \Re(z)[1 - \Re(z)] + k^{-1} \pi^2}, \\
1, \text{ if $T\geq 0$,}
\end{cases}  \quad z \in \ol \Sigma,    \lb{2.44}
\end{align}
and
\begin{align}
\big\|\ol{T^{-x}ST^{-1+x}}\big\|_{\cB(\cH)}
& \leq \big\|ST^{-1}\big\|_{\cB(\cH)}^{1- x} \, \big\|S^*T^{-1}\big\|_{\cB(\cH)}^x
\begin{cases} e^{2 \pi [x(1-x)]^{1/2}}, \\
1, \text{ if $T\geq 0$,}
\end{cases} \quad x \in [0,1].    \lb{2.45}
\end{align}
In particular, assuming $T\geq 0$ and taking $x=1/2$ in \eqref{2.45} one recovers the estimate
\eqref{2.31} $($in this particular case the operator closure sign in \eqref{2.44} is superfluous since
$T^{-1/2}ST^{-1/2} \in \cB(\cH)$ by \eqref{2.29a}$)$.
\end{theorem}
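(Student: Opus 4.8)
The plan is to establish, in order: (a) that $T^{-z}ST^{-1+z}$ genuinely acts on $\dom(T)$; (b) the explicit formula \eqref{2.42}, which simultaneously gives closability and boundedness of the closure, together with a crude norm bound; (c) the refined bound \eqref{2.44} via the Hadamard three-lines theorem, Theorem~\ref{t2.4}, imitating the proof of Theorem~\ref{t2.5}; and (d) the bound \eqref{2.45} by optimizing over $k$. For (a), note that for $g\in\dom(T)$ one has $T^{-1+z}g=T^{-i\Im(z)}T^{-1+\Re(z)}g\in\dom(T)\subseteq\dom(S)$ (because $\Re(z)\in[0,1]$, so $T^{-1+\Re(z)}$ maps $\dom(T)$ into $\dom(T)$), hence $T^{-z}ST^{-1+z}$ is defined on the dense subspace $\dom(T)$.

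For (b), I would insert the generalized polar decomposition \eqref{2.41} with $\alpha=\Re(z)$, namely $S=|S^*|^{\Re(z)}U_S|S|^{1-\Re(z)}$, and split $T^{-z}=T^{-i\Im(z)}T^{-\Re(z)}$ and $T^{-1+z}=T^{-1+\Re(z)}T^{i\Im(z)}$ by the spectral calculus, to obtain, on $\dom(T)$,
\[
T^{-z}ST^{-1+z}=T^{-i\Im(z)}\,\big(T^{-\Re(z)}|S^*|^{\Re(z)}\big)\,U_S\,\big(|S|^{1-\Re(z)}T^{-1+\Re(z)}\big)\,T^{i\Im(z)}.
\]
The two middle factors are controlled by Heinz's inequality: since $U_S$ (resp.\ $U_{S^*}$) is isometric on $\ol{\ran(|S|)}\supseteq\ran(|S|T^{-1})$ (resp.\ on $\ol{\ran(|S^*|)}\supseteq\ran(|S^*|T^{-1})$), one has $\||S|T^{-1}\|_{\cB(\cH)}=\|ST^{-1}\|_{\cB(\cH)}$ and $\||S^*|T^{-1}\|_{\cB(\cH)}=\|S^*T^{-1}\|_{\cB(\cH)}$, and interpolating the real power $|S|^{\sigma}T^{-\sigma}$ between $\sigma=0$ and $\sigma=1$ (Heinz) shows that $|S|^{1-\Re(z)}T^{-1+\Re(z)}$ and $|S^*|^{\Re(z)}T^{-\Re(z)}$ extend to operators in $\cB(\cH)$ of norm at most $\|ST^{-1}\|_{\cB(\cH)}^{1-\Re(z)}$ and $\|S^*T^{-1}\|_{\cB(\cH)}^{\Re(z)}$, respectively. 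Using Remark~\ref{r2.2}\,$(ii)$ to replace $T^{-\Re(z)}|S^*|^{\Re(z)}$ by its everywhere-defined bounded extension $\big[|S^*|^{\Re(z)}T^{-\Re(z)}\big]^*\in\cB(\cH)$, one concludes that $T^{-z}ST^{-1+z}$ agrees on the dense set $\dom(T)$ with the bounded, everywhere-defined operator on the right-hand side of \eqref{2.42}; hence $T^{-z}ST^{-1+z}$ is closable with closure given by \eqref{2.42}. Combining the factor bounds with \eqref{2.23} (or with \eqref{2.24} when $T\ge0$) yields the crude estimate $\big\|\ol{T^{-z}ST^{-1+z}}\big\|_{\cB(\cH)}\le e^{2\pi|\Im(z)|}\|ST^{-1}\|_{\cB(\cH)}^{1-\Re(z)}\|S^*T^{-1}\|_{\cB(\cH)}^{\Re(z)}$, which is already the asserted $T\ge0$ cases (with $e^{2\pi|\Im(z)|}$ replaced by $1$) of \eqref{2.44} and \eqref{2.45}.

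For (c), in the remaining case ($T$ merely self-adjoint) I would run the three-lines argument as in the proof of Theorem~\ref{t2.5}: fix $f\in\cH$, $g\in\dom(T)$, $k\in(0,\infty)$, and set $\varphi_k(z)=e^{kz(z-1)}\big(f,T^{-z}ST^{-1+z}g\big)_{\cH}=e^{kz(z-1)}\big(f,T^{-z}(ST^{-1})T^{z}g\big)_{\cH}$. Since $z\mapsto T^{-z}\in\cB(\cH)$ and $z\mapsto T^{z}g\in\cH$ (the latter because $g\in\dom(T)$) are analytic on $\Sigma$ and continuous on $\ol\Sigma$, so is $\varphi_k$; the factor $e^{kz(z-1)}$ contributes $e^{\Re(kz(z-1))}=e^{k\Re(z)[\Re(z)-1]-k|\Im(z)|^2}$, which combined with the crude bound from (b) gives the growth condition \eqref{2.17} with $a=0$, and on the two edges of $\ol\Sigma$ gives $|\varphi_k(iy)|\le e^{k^{-1}\pi^2}\|ST^{-1}\|_{\cB(\cH)}\|f\|_{\cH}\|g\|_{\cH}$ and $|\varphi_k(1+iy)|\le e^{k^{-1}\pi^2}\|S^*T^{-1}\|_{\cB(\cH)}\|f\|_{\cH}\|g\|_{\cH}$ (here $e^{-ky^2}$ dominates the factor $e^{2\pi|y|}$ from $\|T^{\pm iy}\|_{\cB(\cH)}\le e^{\pi|y|}$, using $\sup_{y}(2\pi|y|-ky^2)=\pi^2/k$). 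Applying \eqref{2.21} and then undoing the factor $e^{kz(z-1)}$ gives \eqref{2.44}, since $f\in\cH$ and $g\in\dom(T)$ range over dense sets. Finally, for (d), specializing \eqref{2.44} to real $z=x\in[0,1]$ and minimizing $kx(1-x)+k^{-1}\pi^2$ over $k>0$ (minimum $2\pi[x(1-x)]^{1/2}$, attained at $k=\pi[x(1-x)]^{-1/2}$ for $x\in(0,1)$) yields \eqref{2.45}; the last sentence of the theorem is then the case $x=1/2$, $T\ge0$ of \eqref{2.45} combined with \eqref{2.29a}.

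I expect the bookkeeping in step (b) to be the main obstacle. One must verify that the vectors produced along the way actually lie in the domains of the unbounded operators $|S^*|^{\Re(z)}$ and $|S|^{1-\Re(z)}$ appearing in \eqref{2.41} --- this uses $\dom(|S|^{1-\Re(z)})\supseteq\dom(|S|)=\dom(S)\supseteq\dom(T)$ and the fact that \eqref{2.41} forces $U_S|S|^{1-\Re(z)}T^{-1+z}g\in\dom(|S^*|^{\Re(z)})$ for $g\in\dom(T)$ --- and one must pass carefully, via Remark~\ref{r2.2}\,$(ii)$, from $T^{-\Re(z)}|S^*|^{\Re(z)}$ to the everywhere-defined bounded operator $\big[|S^*|^{\Re(z)}T^{-\Re(z)}\big]^*$. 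By contrast, once (b) is in place, step (c) is essentially a rerun of the three-lines computation already carried out in the proof of Theorem~\ref{t2.5}.
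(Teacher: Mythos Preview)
Your proposal is correct and follows essentially the same strategy as the paper: the generalized polar decomposition \eqref{2.41} together with Heinz's inequality for closability and the explicit formula \eqref{2.42}, followed by the three-lines argument with the auxiliary factor $e^{kz(z-1)}$ for \eqref{2.44}, and optimization in $k$ for \eqref{2.45}. The only noteworthy difference is that you apply Heinz in the sharp form $\big\||S|^{\alpha}|T|^{-\alpha}\big\|\le\|ST^{-1}\|^{\alpha}$ (rather than the paper's cruder $\big\||S|^{\alpha}(a^2|T|^2+b^2)^{-\alpha/2}\big\|\le1$), which lets you read off the $T\ge0$ case of \eqref{2.44}--\eqref{2.45} directly from step~(b) without rerunning three-lines at $k=0$, as the paper does.
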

%%%%%%%%%%%%%
\begin{proof}
We start by noting that $\dom(S)\supseteq \dom(T)$ (together with $S$ and $T$ closed by hypothesis) implies that $S$ is relatively bounded with respect to $T$ and hence
there exist $a>0$ and $b>0$ such that
\begin{align}
\||S|f\|_{\cH}^2 = \|Sf\|_{\cH}^2 & \leq a^2 \|Tf\|_{\cH}^2 + b^2 \|f\|_{\cH}^2
= a^2 \||T|f\|_{\cH}^2 + b^2 \|f\|_{\cH}^2  \no \\
& = \big\|\big[a^2 |T|^2 + b^2\big]^{1/2} f\big\|_{\cH}^2,
\quad f \in \dom(T)=\dom(|T|).   \lb{2.46}
\end{align}
Thus, applying Heinz's inequality (cf.\ \cite[Sect.~3.2.1]{Fu02}, \cite[Theorem\ 3]{He51},
\cite{Ka52}, \cite[Theorem\ IV.1.11]{KPS82}, \cite{Mc80}), one infers that
\begin{equation}
\dom\big(|S|^\alpha\big) \supseteq \dom\big(\big(a^2|T|^2 + b^2\big)^{\alpha/2}\big)
= \dom\big(|T|^{\alpha}\big), \quad \alpha \in [0,1],   \lb{2.47}
\end{equation}
and
\begin{equation}
\big\||S|^\alpha \big[a^2 |T|^2 + b^2\big]^{-\alpha/2} h\big\|_{\cH}
\leq \| h \|_{\cH}^2, \quad h \in \cH, \;  \alpha \in [0,1].   \lb{2.48}
\end{equation}
Similarly, interchanging $S$ and $S^*$, one obtains
\begin{equation}
\dom\big(|S^*|^\alpha\big) \supseteq \dom\big(|T|^{\alpha}\big), \quad \alpha \in [0,1],    \lb{2.48a}
\end{equation}
and
\begin{equation}
\big\||S^*|^\alpha \big[\wti a^2 |T|^2 + \wti b^2\big]^{-\alpha/2} h\big\|_{\cH}
\leq \| h\|_{\cH}^2, \quad h \in \cH, \;  \alpha \in [0,1],   \lb{2.48b}
\end{equation}
for appropriate $\wti a >0$, $\wti b >0$.
Applying \eqref{2.41} to $S$ (with $\alpha=\Re(z)$), and using \eqref{2.47} and \eqref{2.48a}, one concludes that
\begin{align}
& T^{-z}ST^{-1+z} = T^{-z} |S^*|^{\Re(z)} U_S |S|^{1-\Re(z)} T^{-1+z}  \no \\
& \quad \subseteq \big[|S^*|^{\Re(z)}
T^{-\ol z}\big]^* U_S \big[|S|^{1-\Re(z)} T^{-1+z}\big]    \lb{2.49} \\
& \quad = T^{-i \Im(z)} \big(|S^*|^{\Re(z)} T^{- \Re(z)}\big)^*
U_S |S|^{1-\Re(z)} T^{-1+\Re(z)} T^{i \Im(z)} \in \cB(\cH),  \quad
z \in \ol \Sigma,  \no
\end{align}
proving \eqref{2.42}.

Next, one defines (repeatedly employing below the fact that for a closable operator $A$, $\ol A$
is an extension of $A$)
\begin{align}
\begin{split}
\phi_k (z) = e^{kz(z-1)} \big(Tf,T^{-1-z} S T^{-2+z} Tg\big)_{\cH}
= e^{kz(z-1)} \big(f, \ol{T^{-z}ST^{-1+z}} g\big)_{\cH},&  \\
 f,g \in \dom(T), \; z \in \ol \Sigma, \; k \in [0,\infty).&   \lb{2.50}
\end{split}
\end{align}

Again we primarily focus on the case where $T$ is merely self-adjoint and hence choose $k > 0$
and employ the estimate \eqref{2.23} in the following.

One estimates
\begin{align}
|\phi(iy)| &= e^{-k y^2} \big|\big(T^{iy}f, ST^{-1} T^{iy}g\big)_{\cH}\big|
\leq e^{- k y^2 + 2 \pi |y|} \big\|ST^{-1}\big\|_{\cB(\cH)} \, \|f\|_{\cH} \, \|g\|_{\cH}    \no \\
& \leq e^{k^{-1} \pi^2} \big\|ST^{-1}\big\|_{\cB(\cH)} \, \|f\|_{\cH} \, \|g\|_{\cH},
\quad y\in\bbR,  \lb{2.51} \\
|\phi(1+ iy)| &= e^{-k y^2} \big|\big(T^{iy}f, \ol{T^{-1}S} T^{iy}g\big)_{\cH}\big|
\leq e^{- k y^2 + 2 \pi |y|} \big\|\ol{T^{-1}S}\big\|_{\cB(\cH)} \,
\|f\|_{\cH} \, \|g\|_{\cH}   \no \\
& \leq e^{k^{-1} \pi^2} \big\|S^* T^{-1}\big\|_{\cB(\cH)} \, \|f\|_{\cH} \, \|g\|_{\cH},
\quad y\in\bbR,  \lb{2.52} \\
|\phi (z)| & = e^{-k |\Im(z)|^2 + k \Re(z) [\Re(z) - 1]} \big|\big(Tf,T^{-1-z} S T^{-2+z} Tg\big)_{\cH}\big|    \no \\
& \leq e^{-k |\Im(z)|^2 + k \Re(z) [\Re(z) - 1] + 2 \pi |\Im(z)|} \big\|T^{-1 - \Re(z)}\big\|_{\cB(\cH)}    \no \\
& \quad \times
\big\|ST^{-1}\big\|_{\cB(\cH)} \, \big\|T^{-1+ \Re(z)}\big\|_{\cB(\cH)} \, \|T f\|_{\cH} \, \|T g\|_{\cH}   \no \\
& \leq C, \quad f, g \in \dom(T), \; z \in \ol\Sigma,    \lb{2.53}
\end{align}
where $C=C(f,g,S,T)>0$ is a finite constant, independent of $z \in \ol\Sigma$.

Applying the Hadamard three-lines estimate \eqref{2.21} to $\phi(\cdot)$ then yields
the first estimate in \eqref{2.44} since $\dom(T)$ is dense in $\cH$ and
$\ol{T^{-z}ST^{-1+z}} \in\cB(\cH)$, $z\in\ol\Sigma$, by \eqref{2.42}.

If in addition $T \geq 0$, one chooses $k=0$ in \eqref{2.50} and uses \eqref{2.24} (instead
of \eqref{2.23}) to arrive at the second estimate in \eqref{2.44}.
\end{proof}
%%%%%%%%%%%%%

We emphasize that the case $T \geq 0$ (actually, $T \geq \varepsilon I_{\cH}$ for some 
$\varepsilon > 0$) in the estimate \eqref{2.45} was also derived by Huang 
\cite[Lemma~2.1.(a)]{Hu88}.

The results provided thus far naturally extend to the situation where $T^{-z} S T^{-1 + z}$ is
replaced by $T_2^{-z} S T_1^{-1 + z}$ for two self-adjoint operators $T_j$ in $\cH_j$, $j=1,2$.
As an example, we now illustrate this in the context of Theorem \ref{t2.7}.

%%%%%%%%%%%%%
\begin{hypothesis} \lb{h2.8}
Assume that $T_j$ are self-adjoint operators in $\cH_j$ with
$T_j^{-1} \in \cB(\cH_j)$, $j=1,2$. In addition, suppose that $S$ is a closed operator
mapping $\dom(S) \subseteq \cH_1$ into $\cH_2$ satisfying
\begin{equation}
\dom(S) \supseteq \dom(T_1) \, \text{ and } \,
\dom(S^*) \supseteq \dom(T_2).   \lb{2.54}
\end{equation}
\end{hypothesis}
%%%%%%%%%%%%%

In particular, Hypothesis \ref{h2.8} implies that
\begin{equation}
ST_1^{-1} \in \cB(\cH_1,\cH_2), \quad S^*T_2^{-1} \in \cB(\cH_2,\cH_1).     \lb{2.55}
\end{equation}

Assuming Hypothesis \ref{h2.8}, one obtains the following corollary of Theorem
\ref{t2.7}:

%%%%%%%%%%%%%
\begin{corollary} \lb{c2.9}
Assume Hypothesis \ref{h2.8}. Then
$T_2^{-z}ST_1^{-1+z}$  defined on $\dom(T_1)$, $z\in\ol\Sigma$, is closable, and
\begin{align}
\begin{split}
\ol{T_2^{-z}ST_1^{-1+z}} &= T_2^{-i \Im(z)} \big[|S^*|^{\Re(z)} T_2^{- \Re(z)}\big]^* U_S    \\
& \quad \times |S|^{1-\Re(z)} T_1^{-1+\Re(z)} T_1^{i \Im(z)} \in \cB(\cH_1,\cH_2),
\quad z \in \ol \Sigma.   \lb{2.55a}
\end{split}
\end{align}
In addition, given $ k \in (0,\infty)$, one obtains
\begin{align}
& \big\|\ol{T_2^{-z}ST_1^{-1+z}}\big\|_{\cB(\cH_1,\cH_2)}
\leq \big\|ST_1^{-1}\big\|_{\cB(\cH_1,\cH_2)}^{1-\Re(z)} \,
\big\|S^*T_2^{-1}\big\|_{\cB(\cH_2,\cH_1)}^{\Re(z)}    \no \\
& \quad \times \begin{cases} e^{k |\Im(z)|^2 + k \Re(z)[1 - \Re(z)] + k^{-1} \pi^2}, \\
e^{k |\Im(z)|^2 + k \Re(z)[1 - \Re(z)] + (4k)^{-1} \pi^2}, \text{ if $T_1\geq 0$, or $T_2\geq 0$,} \\
1, \text{ if $T_j\geq 0$, $j=1,2$,}
\end{cases} \quad z \in \ol \Sigma,    \lb{2.56}
\end{align}
and
\begin{align}
& \big\|\ol{T_2^{-x}ST_1^{-1+x}}\big\|_{\cB(\cH_1,\cH_2)}
\leq \big\|ST_1^{-1}\big\|_{\cB(\cH_1,\cH_2)}^{1- x} \,
\big\|S^*T_2^{-1}\big\|_{\cB(\cH_2,\cH_1)}^x    \no \\
& \quad \times \begin{cases} e^{2 \pi [x(1-x)]^{1/2}}, \\
e^{\pi [x(1-x)]^{1/2}}, \text{ if $T_1\geq 0$, or $T_2\geq 0$,} \\
1, \text{ if $T_j\geq 0$, $j=1,2$,}
\end{cases} \quad x \in [0,1].    \lb{2.57}
\end{align}
\end{corollary}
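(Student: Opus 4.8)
The plan is to transcribe the proof of Theorem~\ref{t2.7} into the two-space situation, the only genuine work being to keep track of which of $\cH_1$, $\cH_2$ each operator (and each adjoint) acts on.

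First I would record that $\dom(S)\supseteq\dom(T_1)$, together with $S$ and $T_1$ closed, forces $S$ to be $T_1$-bounded, so there are $a,b>0$ with $\|Sf\|_{\cH_2}^2\le a^2\|T_1f\|_{\cH_1}^2+b^2\|f\|_{\cH_1}^2$ on $\dom(T_1)$; by Heinz's inequality this gives $\dom(|S|^\alpha)\supseteq\dom(|T_1|^\alpha)$ and a uniform $\cB(\cH_1)$-bound on $|S|^\alpha[a^2|T_1|^2+b^2]^{-\alpha/2}$, $\alpha\in[0,1]$, exactly as in \eqref{2.46}--\eqref{2.48}. Interchanging $S$ and $S^*$ and using $\dom(S^*)\supseteq\dom(T_2)$ yields the analogous facts for $|S^*|^\alpha$ in $\cB(\cH_2)$. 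Inserting the generalized polar decomposition $S=|S^*|^{\Re(z)}U_S|S|^{1-\Re(z)}$, with $U_S\in\cB(\cH_1,\cH_2)$ the partial isometry of $S=U_S|S|$, into $T_2^{-z}ST_1^{-1+z}$ and arguing as in \eqref{2.49}, one obtains
\begin{align*}
T_2^{-z}ST_1^{-1+z} &\subseteq \big[|S^*|^{\Re(z)}T_2^{-\ol z}\big]^*U_S\big[|S|^{1-\Re(z)}T_1^{-1+z}\big] \\
& = T_2^{-i\Im(z)}\big(|S^*|^{\Re(z)}T_2^{-\Re(z)}\big)^*U_S\,|S|^{1-\Re(z)}T_1^{-1+\Re(z)}T_1^{i\Im(z)},
\end{align*}
where $|S|^{1-\Re(z)}T_1^{-1+\Re(z)}$ and $|S^*|^{\Re(z)}T_2^{-\Re(z)}$ are bounded by the Heinz estimates, $T_j^{\pm i\Im(z)}$ are bounded by \eqref{2.23}, and $U_S$ is a partial isometry; hence $T_2^{-z}ST_1^{-1+z}$ is closable on $\dom(T_1)$ and \eqref{2.55a} holds.

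For the norm bounds I would set, for $f\in\dom(T_2)$, $g\in\dom(T_1)$, $z\in\ol\Sigma$, $k\in[0,\infty)$,
\[
\phi_k(z)=e^{kz(z-1)}\big(T_2f,T_2^{-1-z}ST_1^{-2+z}T_1g\big)_{\cH_2}=e^{kz(z-1)}\big(f,\ol{T_2^{-z}ST_1^{-1+z}}\,g\big)_{\cH_2},
\]
which by \eqref{2.55a} is analytic on $\Sigma$ and continuous on $\ol\Sigma$, and which, since the factor $e^{kz(z-1)}$ with $k>0$ neutralizes the growth $e^{2\pi|\Im(z)|}$ produced by the imaginary powers, satisfies a $z$-independent bound on $\ol\Sigma$, so that \eqref{2.17} holds with $a=0$ (cf.\ \eqref{2.53}). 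Taking $k>0$ and using $\|T_j^{iy}\|_{\cB(\cH_j)}\le e^{\pi|y|}$, one estimates the boundary values exactly as in \eqref{2.51}, \eqref{2.52} and, after optimizing $-ky^2+2\pi|y|\le k^{-1}\pi^2$, obtains for all $y\in\bbR$,
\begin{gather*}
|\phi_k(iy)|\le e^{k^{-1}\pi^2}\big\|ST_1^{-1}\big\|_{\cB(\cH_1,\cH_2)}\|f\|_{\cH_2}\|g\|_{\cH_1}, \\
|\phi_k(1+iy)|\le e^{k^{-1}\pi^2}\big\|S^*T_2^{-1}\big\|_{\cB(\cH_2,\cH_1)}\|f\|_{\cH_2}\|g\|_{\cH_1},
\end{gather*}
the second estimate using the two-space analog of \eqref{2.4}, i.e.\ $\ol{T_2^{-1}S}=(S^*T_2^{-1})^*$ with equal norms (valid since $(T_2^{-1}S)^*\supseteq S^*T_2^{-1}$ and the latter is everywhere defined). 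Hadamard's three-lines estimate \eqref{2.21} applied to $\phi_k$, division by $|e^{kz(z-1)}|=e^{k[\Re(z)(\Re(z)-1)-\Im(z)^2]}$, and passage to the supremum over unit vectors in the dense subspaces $\dom(T_2)\subseteq\cH_2$ and $\dom(T_1)\subseteq\cH_1$ (using \eqref{2.55a}), then yield the first line of \eqref{2.56}.

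The two refinements follow as in Theorem~\ref{t2.7}: if $T_1\ge0$ or $T_2\ge0$, the corresponding $T_j^{iy}$ is unitary by \eqref{2.24}, the boundary growth improves to $e^{\pi|y|}$, and $\sup_{y\in\bbR}(-ky^2+\pi|y|)=(4k)^{-1}\pi^2$ gives the second line of \eqref{2.56}; if both $T_j\ge0$, one takes $k=0$, whereupon \eqref{2.24} makes the two boundary constants equal to $\|ST_1^{-1}\|_{\cB(\cH_1,\cH_2)}$ and $\|S^*T_2^{-1}\|_{\cB(\cH_2,\cH_1)}$, giving the third line. Finally \eqref{2.57} is obtained from \eqref{2.56} by setting $z=x\in[0,1]$ and minimizing $kx(1-x)+k^{-1}\pi^2$, respectively $kx(1-x)+(4k)^{-1}\pi^2$, over $k>0$ via the arithmetic--geometric mean inequality, which gives $2\pi[x(1-x)]^{1/2}$, respectively $\pi[x(1-x)]^{1/2}$. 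The main obstacle is purely organizational: keeping the adjoints and domains straight across the two distinct spaces $\cH_1$, $\cH_2$ (in particular verifying $\dom(T_2^{-z}ST_1^{-1+z})\supseteq\dom(T_1)$ and the identity $\ol{T_2^{-1}S}=(S^*T_2^{-1})^*$); no analytic ingredient beyond Theorems~\ref{t2.4} and~\ref{t2.7} is required.
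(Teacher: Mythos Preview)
Your proof is correct, but it takes a different route from the paper's. The paper proves Corollary~\ref{c2.9} by a block-operator reduction: it sets $\boldsymbol{\cH}=\cH_1\oplus\cH_2$, defines
\[
\textbf{S}=\begin{pmatrix}0&0\\S&0\end{pmatrix},\qquad \textbf{T}=\begin{pmatrix}T_1&0\\0&T_2\end{pmatrix},
\]
verifies that the pair $(\textbf{S},\textbf{T})$ satisfies Hypothesis~\ref{h2.1} in the single space $\boldsymbol{\cH}$, and then simply invokes Theorem~\ref{t2.7}, reading off the result from $\|\textbf{S}\textbf{T}^{-1}\|_{\cB(\boldsymbol{\cH})}=\|ST_1^{-1}\|_{\cB(\cH_1,\cH_2)}$ and $\|\textbf{S}^*\textbf{T}^{-1}\|_{\cB(\boldsymbol{\cH})}=\|S^*T_2^{-1}\|_{\cB(\cH_2,\cH_1)}$. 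You instead transcribe the proof of Theorem~\ref{t2.7} line by line into the two-space setting. The paper's trick is slicker and avoids re-doing the Heinz and three-lines arguments; your approach, while longer, has the advantage that it makes the middle case of \eqref{2.56}--\eqref{2.57} (exactly one of $T_1,T_2\ge0$) transparent: the two factors $\|T_1^{iy}\|$ and $\|T_2^{iy}\|$ are tracked separately, so one unitary factor immediately halves the exponential growth. In the block-operator approach this middle case is not literally contained in the two alternatives of Theorem~\ref{t2.7} (since $\textbf{T}\ge0$ requires \emph{both} $T_j\ge0$), and one has to look back into the proof of Theorem~\ref{t2.7} to extract it.
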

%%%%%%%%%%%%%
\begin{proof}
Consider $\boldsymbol{\cH}:=\cH_1 \oplus \cH_2$ and introduce
\begin{align}
\textbf{S} & =
\begin{pmatrix} 0 & 0\\ S & 0 \\ \end{pmatrix}, \quad
\dom(\textbf{S}) = \dom(S) \oplus \cH_2,       \lb{2.58} \\
\textbf{S}^* & =
\begin{pmatrix} 0 & S^* \\ 0 & 0 \\ \end{pmatrix}, \quad
\dom(\textbf{S}^*) = \cH_1 \oplus \dom(S^*),       \lb{2.58a}
\end{align}
and
\begin{equation}
\textbf{T} = \begin{pmatrix} T_1 & 0\\ 0 & T_2 \\ \end{pmatrix}, \quad
\dom(\textbf{T}) = \dom(T_1) \oplus \dom(T_2).     \lb{2.59}
\end{equation}

Then $\textbf{S}$ is a closed operator in $\boldsymbol{\cH}$ and $\textbf{T}$ is a self-adjoint operator in $\boldsymbol{\cH}$ with bounded inverse given by
\begin{equation}
\textbf{T}^{-1} = \begin{pmatrix} T_1^{-1} & 0\\ 0 & T_2^{-1} \\
\end{pmatrix} \in \cB(\boldsymbol{\cH}).     \lb{2.60}
\end{equation}
Moreover,
\begin{equation}
\dom(\textbf{S})\cap \dom(\textbf{S}^*) = \dom(S)\oplus \dom(S^*)\supseteq
\dom(T_1)\oplus \dom(T_2) = \dom(\textbf{T}),     \lb{2.61}
\end{equation}
that is, the pair $(\textbf{S},\textbf{T})$ satisfies Hypothesis 2.1.

Thus,
\begin{equation}
\textbf{S}\textbf{T}^{-1} = \begin{pmatrix} 0 & 0 \\ ST_1^{-1} & 0\end{pmatrix}
\in \cB(\boldsymbol{\cH}),
\quad
\textbf{S}^*\textbf{T}^{-1} = \begin{pmatrix} 0 & S^* T_2^{-1}
\\ 0 & 0\end{pmatrix} \in \cB(\boldsymbol{\cH}),
\end{equation}
and
\begin{equation}
\big\|\textbf{S}\textbf{T}^{-1}\big\|_{\cB(\boldsymbol{\cH})} = \big\|ST_1^{-1}\big\|_{\cB(\cH_1,\cH_2)},
\quad
\big\|\textbf{S}^*\textbf{T}^{-1}\big\|_{\cB(\boldsymbol{\cH})} = \big\|S^*T_2^{-1}\big\|_{\cB(\cH_2,\cH_1)}.
\lb{2.62}
\end{equation}
Applying Theorem \ref{t2.7} to the pair $(\textbf{S},\textbf{T})$, one obtains \eqref{2.56} and
\eqref{2.57}.
\end{proof}
%%%%%%%%%%%%%

In the special case, where $\cH_1 = \cH_2 = \cH$, $S$ and $T_1, T_2\ge 0$  are bounded linear
operators in $\cH$, the third estimate \eqref{2.57} recovers Lemma\ 25 in \cite{CPS09}.
On the other hand, the case $x \in [0,1]$ in the first estimate in \eqref{2.57} is a special case of
\eqref{4.33}, which in turn is recorded in \cite[Lemma~16.3]{Ya10}.

For connections with the generalized Heinz inequality and bounds on operators of the type 
$T_2^{-x}ST_1^{-x}$, $x \in [0,1]$, under various conditions on $S$ and $T_j$, $j=1,2$,   
we also refer to \cite[Lemma~2.1]{Hu88}, \cite[Theorem\ 3]{Ka61}, \cite[Theorem\ 6]{Ka62}, 
and the references cited therein. In particular,  \cite[Lemma~2.1]{Hu88} 
predates and extends \cite[Proposition~A.1.(3)]{Le05}. For additional variants on the Heinz 
inequality we refer, for instance, to \cite[Lemma~11, Remark~12]{PS10}, and especially, to 
\cite[Sect.~3.12]{Fu02} and the detailed bibliography collected therein. For interesting 
extensions of the Heinz inequality employing operator monotone functions we also refer to 
\cite{TUU02}, \cite{Uc99}, \cite{Uc05}. (An exhaustive list of references on (extensions of) 
the Heinz inequality is beyond the scope of this short paper due to the enormous amount 
of literature on this subject.)

%%%%%%%%%%%%%%%%%%%%%%%%%%%%%%%%%%%%%%
%%%%%%%%%%%%%%%%%%%%%%%%%%%%%%%%%%%%%%
\section{Interpolation and Trace Ideals Revisited}  \lb{s3}
%%%%%%%%%%%%%%%%%%%%%%%%%%%%%%%%%%%%%%
%%%%%%%%%%%%%%%%%%%%%%%%%%%%%%%%%%%%%%

In this section we recall a powerful result on interpolation theory in connection
with linear operators in the trace ideal spaces $\cB_p(\cH)$, $p\in [1,\infty)$,
originally due to I.\ C.\ Gohberg, M.\ G. Krein, and S.\ G.\ Krein
(cf.\ \cite[p.\ 139]{GK69}) and then apply it to the types of operators studied in Section \ref{s2}. 
For background on trace ideals we refer to \cite[Ch.~I--III]{GK69}, \cite{Sc60}, 
\cite[Chs.~1--3]{Si05}

In particular, we will dwell a bit on a particular case omitted in the discussion
of Gohberg and Krein \cite[Theorem\ III.13.1]{GK69} (see also
\cite[Theorem\ III.5.1]{GK70}):

%%%%%%%%%%%%%
\begin{theorem} [\cite{GK69}, Theorem\ III.13.1 (see also
\cite{GK70}, Theorem\ III.5.1)]  \lb{t3.1} ${}$ \\
Let $p_0, p_1 \in [1,\infty)$, $p_0 \leq p_1$, and suppose that $A(z)\in\cB(\cH)$, $z\in\ol \Sigma$,
and that $A(\cdot)$ is analytic on $\Sigma$. Assume that for some $C_0, C_1 \in (0,\infty)$,
\begin{equation}
\|A(iy)\|_{\cB_{p_0}(\cH)} \leq C_0, \quad \|A(1+iy)\|_{\cB_{p_1}(\cH)} \leq C_1,
\quad y\in\bbR,     \lb{3.1}
\end{equation}
and suppose that for all $f,g \in \cH$, there exist $C_{f,g}\in\bbR$ and
$a_{f,g}\in [0,\pi)$, such that
\begin{equation}
\sup_{z\in\Sigma}\Big[e^{-a_{f,g}|\Im(z)|}\ln (|(f, A(z)g)_{\cH}|)\Big]
\leq C_{f,g}.     \lb{3.2}
\end{equation}
Then
\begin{equation}
A(z) \in \cB_{p_z}(\cH), \quad \f{1}{p_z} = \f{1 - \Re(z)}{p_0}
+ \f{\Re(z)}{p_1}, \quad z \in \ol \Sigma,     \lb{3.3}
\end{equation}
and
\begin{equation}
\|A(z)\|_{\cB_{p_z}(\cH)}  \leq C_0^{1-\Re(z)} C_1^{\Re(z)},
\quad z\in \ol \Sigma.     \lb{3.4}
\end{equation}
The estimate \eqref{3.4} remains valid for $p_0 \in [1,\infty)$ and $p_1 = \infty$ in 
\eqref{3.1} and \eqref{3.3}.  \\  
In particular, if $p_0=p_1 \in [1,\infty)$, then
\begin{equation}
\|A(z)\|_{\cB_{p_0}(\cH)}  \leq C_0^{1-\Re(z)} C_1^{\Re(z)},
\quad z\in \ol \Sigma.      \lb{3.5}
\end{equation}
\end{theorem}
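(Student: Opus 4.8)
The plan is to reduce Theorem~\ref{t3.1} to its already-known statement for finite $p_0,p_1$ by an approximation argument. Recall that the case $p_0,p_1\in[1,\infty)$ is the classical result of Gohberg and M.~G.\ Krein and S.~G.\ Krein, so the only thing to prove is the extension asserting that \eqref{3.4} persists when $p_1=\infty$ (with $C_1$ now being the bound on $\|A(1+iy)\|_{\cB(\cH)}$), and correspondingly $p_z$ is defined by $1/p_z=(1-\Re(z))/p_0$. I would carry this out in three steps.

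First, I would set up a truncation. Fix a self-adjoint operator $T\ge \varepsilon I_{\cH}$ admitting a spectral family, or more simply fix an orthonormal basis $\{e_n\}_{n\in\bbN}$ of $\cH$ with associated finite-rank orthogonal projections $P_N$ onto $\mathrm{span}\{e_1,\dots,e_N\}$, and consider, for each fixed $q\in[1,\infty)$ with $q>p_0$, the family $A_N(z):=P_N A(z) P_N$. Since $P_N$ is finite rank, $A_N(z)\in\cB_{q}(\cH)$ for every $z\in\ol\Sigma$; $A_N(\cdot)$ is analytic on $\Sigma$ (it is a finite linear combination of the scalar analytic functions $z\mapsto(e_j,A(z)e_k)_{\cH}$); and the Phragmen--Lindel\"of growth condition \eqref{3.2} for $A(\cdot)$ passes to $A_N(\cdot)$ verbatim, testing against the finitely many pairs $e_j,e_k$. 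On the boundary line $\Re(z)=0$ we have $\|A_N(iy)\|_{\cB_{p_0}(\cH)}\le\|A(iy)\|_{\cB_{p_0}(\cH)}\le C_0$ because $P_N$ is a contraction and $\cB_{p_0}$-norms do not increase under multiplication by contractions; on the line $\Re(z)=1$ we use $\|A_N(1+iy)\|_{\cB_{q}(\cH)}\le\|A_N(1+iy)\|_{\cB_q(\cH)}$, and here the key point is to bound the $\cB_q$-norm of the finite-rank operator $P_N A(1+iy)P_N$ in terms of its operator norm and the rank: $\|P_NA(1+iy)P_N\|_{\cB_q(\cH)}\le N^{1/q}\|A(1+iy)\|_{\cB(\cH)}\le N^{1/q}C_1$.

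Second, I would apply the already-established finite-exponent case of the theorem to $A_N(\cdot)$ with the pair $(p_0,q)$ and the constants $(C_0, N^{1/q}C_1)$. This yields $A_N(z)\in\cB_{q_z^{(N)}}(\cH)$ with $1/q_z=(1-\Re(z))/p_0+\Re(z)/q$ and
\[
\|A_N(z)\|_{\cB_{q_z}(\cH)}\le C_0^{1-\Re(z)}\,\big(N^{1/q}C_1\big)^{\Re(z)}
= N^{\Re(z)/q}\,C_0^{1-\Re(z)}C_1^{\Re(z)},\qquad z\in\ol\Sigma .
\]
Now I would interpolate this back down: since $q_z<p_z$ in general, reversing the estimate on the rank-$N$ operator gives $\|A_N(z)\|_{\cB_{p_z}(\cH)}\le N^{1/p_z-1/q_z}\|A_N(z)\|_{\cB_{q_z}(\cH)}$, and one computes $1/p_z-1/q_z=-\Re(z)/q$, so the two powers of $N$ cancel exactly, leaving
\[
\|A_N(z)\|_{\cB_{p_z}(\cH)}\le C_0^{1-\Re(z)}C_1^{\Re(z)},\qquad z\in\ol\Sigma ,
\]
uniformly in $N$. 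Finally I would let $N\to\infty$: $A_N(z)=P_NA(z)P_N\to A(z)$ in $\cB(\cH)$ (hence strongly), and since the sequence is bounded in $\cB_{p_z}(\cH)$ uniformly in $N$, the lower semicontinuity of the $\cB_{p_z}$-norm under strong convergence (equivalently, Fatou for Schatten norms) forces $A(z)\in\cB_{p_z}(\cH)$ with $\|A(z)\|_{\cB_{p_z}(\cH)}\le\liminf_N\|A_N(z)\|_{\cB_{p_z}(\cH)}\le C_0^{1-\Re(z)}C_1^{\Re(z)}$. The special case $p_0=p_1$ giving \eqref{3.5} is immediate, and was already contained in the finite-exponent statement.

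The main obstacle is ensuring the powers of $N$ introduced by the two ``rank $\le N$'' estimates cancel, which is precisely the algebraic identity $1/p_z-1/q_z=-\Re(z)/q$ built into the definitions; once that is in place the argument is routine. A secondary technical point worth stating carefully is the justification that the growth hypothesis \eqref{3.2} survives truncation and that the finite-rank operators lie in every $\cB_q$ so the classical theorem genuinely applies — both are straightforward but should be spelled out. One could alternatively avoid the double-truncation juggling by invoking the classical theorem directly with $p_1=\infty$ if a reference for that endpoint version is available; the approximation route above has the advantage of being self-contained, relying only on the finite-exponent case and the elementary rank estimate $\|\cdot\|_{\cB_q}\le (\mathrm{rank})^{1/q}\|\cdot\|_{\cB(\cH)}$ together with Fatou for Schatten norms.
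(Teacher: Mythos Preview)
Your proposal and the paper address different pieces of the statement. The paper supplies a proof only for the case $p_0 = p_1$ (which Gohberg--Krein stated only under the strict inequality $p_0 < p_1$): for a finite-rank $F$ with $\|F\|_{\cB_{p_0'}(\cH)} = 1$ one sets $\varphi(z) = \tr(A(z)F)$, bounds $|\varphi|$ on the two boundary lines by $C_0$ and $C_1$ using the trace H\"older inequality, applies Hadamard's three-lines estimate \eqref{2.21}, and then takes the supremum over $F$ via the duality characterization of $\|\cdot\|_{\cB_{p_0}(\cH)}$. The endpoint $p_1=\infty$ is only asserted, not proved, in the paper, so your choice to target that case is reasonable.

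However, your argument has a genuine gap at the ``interpolate back down'' step. You assert that for the rank-$N$ operator $A_N(z)$ one has
\[
\|A_N(z)\|_{\cB_{p_z}(\cH)} \le N^{1/p_z - 1/q_z}\,\|A_N(z)\|_{\cB_{q_z}(\cH)},
\qquad \tfrac{1}{p_z} - \tfrac{1}{q_z} = -\tfrac{\Re(z)}{q} < 0,
\]
and that this negative power of $N$ cancels the factor $N^{\Re(z)/q}$ coming from the right boundary. But that inequality is false: since $q_z < p_z$, the rank-$N$ estimate runs the \emph{other} way, namely $\|B\|_{\cB_{q_z}} \le N^{1/q_z - 1/p_z}\|B\|_{\cB_{p_z}}$, while in the direction you need one only has the trivial monotonicity $\|B\|_{\cB_{p_z}} \le \|B\|_{\cB_{q_z}}$. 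A rank-one operator in $P_N\cH$ with $N>1$ (for which all Schatten norms coincide) already violates your claimed bound. Thus the powers of $N$ do not cancel and the uniform-in-$N$ estimate fails as written. The scheme is easily repaired: for each fixed $N$ let $q\to\infty$, so that $q_z\to p_z$, $N^{\Re(z)/q}\to 1$, and, since $A_N(z)$ has finite rank, $\|A_N(z)\|_{\cB_{q_z}(\cH)}\to\|A_N(z)\|_{\cB_{p_z}(\cH)}$ by continuity of Schatten norms in the exponent on finite-rank operators. This yields $\|A_N(z)\|_{\cB_{p_z}(\cH)}\le C_0^{1-\Re(z)}C_1^{\Re(z)}$ uniformly in $N$, after which your Fatou step in $N$ goes through.
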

%%%%%%%%%%%%%
\begin{proof}
Theorem \ref{t3.1} and its proof is presented by Gohberg and Krein in
\cite[Theorem\ III.13.1]{GK69} under the additional assumption that $p_0 < p_1$. (Moreover, this additional restriction is repeated in \cite[Theorem\ III.5.1]{GK70}, where the result is stated without proof.) However, their proof extends to special case where $p_0=p_1 \in [1,\infty)$ without any difficulties, in fact, it even simplifies a bit. For the convenience of the reader we now present the proof in this particular situation:

Let $F\in\cB(\cH)$ be a finite-rank operator and suppose that
\begin{align}
\begin{split}
& \|F\|_{\cB_{p_0'}(\cH)} = 1, \quad p_0^{-1} + (p_0')^{-1} =1, \, \text{ if }  \, p_0 \in (1,\infty),  \\
& \|F\|_{\cB(\cH)} =1, \, \text{ if } \, p_0 = 1,      \lb{3.6}
\end{split}
\end{align}
and consider the function
\begin{equation}
\varphi (z) = \tr(A(z) F), \quad z \in \ol \Sigma.     \lb{3.7}
\end{equation}
By the assumptions on $A(\cdot)$, $\varphi(\cdot)$ is analytic on $\Sigma$ and
\begin{equation}
\sup_{z\in\Sigma} \Big[e^{-a |\Im(z)|} \ln(|\varphi(z)|)\Big] \leq C    \lb{3.7a}
\end{equation}
for some $a=a(F)\in [0,\pi)$ and $C=C(F)\in\bbR$. In addition, one estimates
\begin{align}
|\varphi (iy)| &\leq \|A(iy) F\|_{\cB_1(\cH)} \leq \|A(iy)\|_{\cB_{p_0}(\cH)} \,
\|F\|_{\cB_{p_0'}(\cH)} \leq C_0, \quad y\in\bbR,      \lb{3.8} \\
|\varphi (1+iy)| &\leq \|A(1+iy) F\|_{\cB_1(\cH)} \leq \|A(1+iy)\|_{\cB_{p_0}(\cH)} \,
\|F\|_{\cB_{p_0'}(\cH)} \leq C_1, \quad y\in\bbR.     \lb{3.9}
\end{align}
Applying Hadamard's three-lines estimate \eqref{2.21} to $\varphi(\cdot)$ then yields
\begin{equation}
|\varphi (z)| = |\tr(A(z)F)| \leq C_0^{1-\Re(z)} C_1^{\Re(z)},
\quad z \in \ol \Sigma. \lb{3.10}
\end{equation}
Next, denoting by $\cF(\cH)$ the set of all finite-rank operators in $\cH$, we recall
that $B\in \cB_{p_0}(\cH)$ if and only if the number $\|B\|_{\cB_{p_0}(\cH)}$ is finite, where
\begin{align}
\|B\|_{\cB_{p_0}(\cH)} = \begin{cases}
\sup_{0 \neq F \in \cF(\cH)} |\tr(BF)| / \|F\|_{\cB_{p_0'}(\cH)}, & p_0 \in (1,\infty), \\
\sup_{0 \neq F \in \cF(\cH)} |\tr(BF)| / \|F\|_{\cB(\cH)}, & p_0 = 1 
\end{cases}     \lb{3.11}
\end{align}
(cf.\ \cite[Lemma\ III.12.1]{GK69}). Thus, \eqref{3.10} implies \eqref{3.5} due to the normalization in \eqref{3.6}.
\end{proof}
%%%%%%%%%%%%%

Alternatively, Theorem \ref{t3.1} follows from combining Theorems~IX.20, IX.22, and Proposition~8
in \cite{RS75}.

Next we prove a trace ideal analog of Theorem \ref{t2.7}, using Theorem \ref{t3.1}:

%%%%%%%%%%%%%
\begin{theorem}   \lb{t3.2}
In addition to Hypothesis \ref{h2.1}, let $p\in[1,\infty)$, and assume that
\begin{equation}
ST^{-1} \in \cB_p(\cH), \quad S^*T^{-1} \in \cB_p(\cH).     \lb{3.12}
\end{equation}
Then $T^{-z}ST^{-1+z}$, $z\in\ol\Sigma$, defined on $\dom(T)$, is closable in $\cH$, and
\begin{equation}
\ol{T^{-z}ST^{-1+z}} \in \cB_p(\cH),  \quad  z \in \ol \Sigma.   \lb{3.13} \\
\end{equation}
In addition, given $ k \in (0,\infty)$, one obtains
\begin{align}
\big\|\ol{T^{-z}ST^{-1+z}}\big\|_{\cB_p(\cH)}
& \leq \big\|ST^{-1}\big\|_{\cB_p(\cH)}^{1-\Re(z)} \, \big\|S^*T^{-1}\big\|_{\cB_p(\cH)}^{\Re(z)}    \no \\
& \quad \times \begin{cases} e^{k |\Im(z)|^2 + k \Re(z)[1 - \Re(z)] + k^{-1} \pi^2}, \\
1, \text{ if $T\geq 0$,}
\end{cases}  \quad z \in \ol \Sigma,    \lb{3.14}
\end{align}
and
\begin{align}
\big\|\ol{T^{-x}ST^{-1+x}}\big\|_{\cB_p(\cH)}
& \leq \big\|ST^{-1}\big\|_{\cB_p(\cH)}^{1- x} \, \big\|S^*T^{-1}\big\|_{\cB_p(\cH)}^x
\begin{cases} e^{2 \pi [x(1-x)]^{1/2}}, \\
1, \text{ if $T\geq 0$,}
\end{cases} \quad x \in [0,1].    \lb{3.14a}
\end{align}
In particular, assuming $T \geq 0$ and taking $x=1/2$ in \eqref{3.14a} one obtains
\begin{align}
T^{-1/2}ST^{-1/2} &= \big(T^{-1/2}S^*T^{-1/2}\big)^* \in \cB_p(\cH),  \lb{3.15}
\end{align}
and
\begin{equation}
\big\|T^{-1/2}ST^{-1/2}\big\|_{\cB_p(\cH)} = \big\|T^{-1/2}S^*T^{-1/2} \big\|_{\cB_p(\cH)}
\leq \big\|ST^{-1}\big\|_{\cB_p(\cH)}^{1/2} \, \big\|S^*T^{-1}\big\|_{\cB_p(\cH)}^{1/2}.    \lb{3.17}
\end{equation}
\end{theorem}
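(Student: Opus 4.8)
The plan is to mirror the proof of Theorem~\ref{t2.7} almost verbatim, replacing the operator norm $\|\cdot\|_{\cB(\cH)}$ by the trace ideal norm $\|\cdot\|_{\cB_p(\cH)}$ and replacing the scalar three-lines theorem (Theorem~\ref{t2.4}) by its trace-ideal analog (Theorem~\ref{t3.1} with $p_0 = p_1 = p$). First I would verify that $\ol{T^{-z}ST^{-1+z}}$ actually lies in $\cB_p(\cH)$ for every $z \in \ol\Sigma$, so that the three-lines machinery applies to a genuinely $\cB_p$-valued function. This follows from the explicit factorization \eqref{2.42}: one has $\ol{T^{-z}ST^{-1+z}} = T^{-i\Im(z)}\big[|S^*|^{\Re(z)}T^{-\Re(z)}\big]^* U_S |S|^{1-\Re(z)}T^{-1+\Re(z)}T^{i\Im(z)}$, and I would insert the identity $T^{-1} = T^{-1}$ appropriately (or rather use the generalized polar decomposition together with the relative boundedness estimates \eqref{2.47}--\eqref{2.48b}) to write $\ol{T^{-z}ST^{-1+z}}$ as a bounded operator times a power of $|S|^{1-\alpha}|T|^{-(1-\alpha)}$-type factor times $ST^{-1}$ (or $S^*T^{-1}$), the latter being in $\cB_p$ by \eqref{3.12}; since $\cB_p$ is a two-sided ideal, the product stays in $\cB_p(\cH)$, giving \eqref{3.13}.

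Next I would set up the analytic family. Define, for $k \in [0,\infty)$,
\begin{equation*}
A_k(z) = e^{kz(z-1)}\, \ol{T^{-z}ST^{-1+z}}, \quad z \in \ol\Sigma,
\end{equation*}
which is $\cB_p(\cH)$-valued, analytic on $\Sigma$ (one checks analyticity on the boundary-case-free region using the factorization and the analyticity of $z \mapsto T^{z}$, $|S|^{1-\Re(z)}$ etc.\ in the appropriate sense), and bounded on $\ol\Sigma$ in $\cB(\cH)$-norm, hence the weak-analyticity/growth hypothesis \eqref{3.2} is satisfied for each pair $f,g$ exactly as the constant $C$ in \eqref{2.53} was produced. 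On the two boundary lines I would compute, using $T^{iy}$ unitary-or-bounded-by-$e^{\pi|y|}$ from \eqref{2.23}:
\begin{align*}
\|A_k(iy)\|_{\cB_p(\cH)} &= e^{-ky^2}\big\|T^{iy}(ST^{-1})T^{iy}\big\|_{\cB_p(\cH)}
\le e^{-ky^2 + 2\pi|y|}\big\|ST^{-1}\big\|_{\cB_p(\cH)}
\le e^{k^{-1}\pi^2}\big\|ST^{-1}\big\|_{\cB_p(\cH)}, \\
\|A_k(1+iy)\|_{\cB_p(\cH)} &= e^{-ky^2}\big\|T^{iy}\,\ol{T^{-1}S}\,T^{iy}\big\|_{\cB_p(\cH)}
\le e^{k^{-1}\pi^2}\big\|S^*T^{-1}\big\|_{\cB_p(\cH)},
\end{align*}
where the first uses $\ol{T^{-z}ST^{-1+z}}\big|_{z=iy} = T^{iy}(ST^{-1})T^{-iy}\cdot T^{2iy}$... more carefully, $\ol{T^{-iy}ST^{-1+iy}} = T^{-iy}(ST^{-1})T^{iy}$, so the $\cB_p$-norm is bounded by $\|T^{-iy}\|_{\cB(\cH)}\|ST^{-1}\|_{\cB_p(\cH)}\|T^{iy}\|_{\cB(\cH)} \le e^{2\pi|y|}\|ST^{-1}\|_{\cB_p(\cH)}$, and the second similarly via \eqref{2.52} and $\|\ol{T^{-1}S}\|_{\cB_p(\cH)} = \|(\ol{T^{-1}S})^*\|_{\cB_p(\cH)} = \|S^*T^{-1}\|_{\cB_p(\cH)}$.

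Then applying Theorem~\ref{t3.1} (case $p_0 = p_1 = p$, estimate \eqref{3.5}) to $A_k(\cdot)$ gives
\begin{equation*}
e^{-k|\Im(z)|^2 + k\Re(z)[1-\Re(z)]}\big\|\ol{T^{-z}ST^{-1+z}}\big\|_{\cB_p(\cH)}
\le \big(e^{k^{-1}\pi^2}\big)\big\|ST^{-1}\big\|_{\cB_p(\cH)}^{1-\Re(z)}\big\|S^*T^{-1}\big\|_{\cB_p(\cH)}^{\Re(z)},
\end{equation*}
which rearranges to \eqref{3.14}; optimizing over $k>0$ at the point $z=x$ (choosing $k = \pi/[x(1-x)]^{1/2}$, exactly as in the passage from \eqref{2.29A} to \eqref{2.29B}) produces the constant $e^{2\pi[x(1-x)]^{1/2}}$ in \eqref{3.14a}. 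When $T\ge 0$ (hence $T\ge\eps I_\cH$) I would take $k=0$ and use that $T^{iy}$ is unitary \eqref{2.24}, so the boundary bounds become $\|A_0(iy)\|_{\cB_p} \le \|ST^{-1}\|_{\cB_p}$ and $\|A_0(1+iy)\|_{\cB_p}\le \|S^*T^{-1}\|_{\cB_p}$, yielding the constant $1$ in both \eqref{3.14} and \eqref{3.14a}. Finally, for \eqref{3.15}--\eqref{3.17} I set $x=1/2$, $T\ge0$: then $T^{-1/2}ST^{-1/2}\in\cB(\cH)$ already by Theorem~\ref{t2.6} (so no closure is needed), it lies in $\cB_p(\cH)$ by \eqref{3.13}, the adjoint relation is \eqref{2.30}, and the norm bound is \eqref{3.14a} at $x=1/2$ with constant $1$.

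The main obstacle I anticipate is the careful justification that $A_k(\cdot)$ is \emph{analytic as a $\cB_p(\cH)$-valued function on $\Sigma$} (not merely bounded and weakly measurable): one must argue that $z\mapsto \ol{T^{-z}ST^{-1+z}}$ is analytic in the $\cB_p$-norm topology, which requires differentiating the factorization \eqref{2.42} and controlling the $\cB_p$-norms of the derivatives of the factors $T^{\pm\Re(z)}$, $|S|^{1-\Re(z)}$, $|S^*|^{\Re(z)}$, $T^{\pm i\Im(z)}$ uniformly on compact subsets of $\Sigma$ — here the Heinz-inequality bounds \eqref{2.47}--\eqref{2.48b} do the work of keeping the ``derivative of the fractional power'' factors bounded, combined with the ideal property to absorb $ST^{-1}\in\cB_p(\cH)$. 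Once this analyticity and the growth condition \eqref{3.2} are in hand, the rest is a routine transcription of the proof of Theorem~\ref{t2.7} with $\cB(\cH)$ replaced by $\cB_p(\cH)$.
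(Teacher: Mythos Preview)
Your approach is exactly the paper's: define $A_k(z)=e^{kz(z-1)}\ol{T^{-z}ST^{-1+z}}$, estimate $\|A_k(iy)\|_{\cB_p}$ and $\|A_k(1+iy)\|_{\cB_p}$ via \eqref{2.23} (or \eqref{2.24} when $T\ge0$), bound $\|A_k(z)\|_{\cB(\cH)}$ uniformly using the factorization \eqref{2.42} and Heinz, and then invoke Theorem~\ref{t3.1} with $p_0=p_1=p$.

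Two clarifications, though. First, the ``main obstacle'' you flag is not one: Theorem~\ref{t3.1} only assumes $A(\cdot)$ is $\cB(\cH)$-valued and analytic on $\Sigma$ (look at its proof---one forms $\varphi(z)=\tr(A(z)F)$ for finite-rank $F$, which is a finite sum of matrix coefficients), together with the scalar growth bound \eqref{3.2}. You never need $\cB_p$-norm analyticity. The $\cB(\cH)$-analyticity of $z\mapsto\ol{T^{-z}ST^{-1+z}}$ follows, for instance, by writing $T^{-z}ST^{-1+z}g=T^{-z}(ST^{-1})T^{z}g$ for $g\in\dom(T)$ (each factor analytic in $z$), then extending to all $g\in\cH$ by density and the locally uniform $\cB(\cH)$-bound from \eqref{2.44}. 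Second, you do not need to verify \eqref{3.13} before running the three-lines argument: membership in $\cB_p(\cH)$ on the interior of $\Sigma$ is the \emph{conclusion} of Theorem~\ref{t3.1}, not a hypothesis; only the boundary values $A_k(iy)$, $A_k(1+iy)$ must lie in $\cB_p(\cH)$, and that is immediate from $ST^{-1},\,S^*T^{-1}\in\cB_p(\cH)$ and the ideal property.
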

%%%%%%%%%%%%%
\begin{proof}
First we note that Theorem \ref{t2.7} applies and hence \eqref{2.42}, \eqref{2.44}
are at our disposal. Next, we introduce
\begin{equation}
A (z) = e^{k z(z-1)} \ol{T^{-z}ST^{-1+z}},  \quad z \in \ol \Sigma, \; k \in [0,\infty),   \lb{3.18}
\end{equation}
and focus again on $k > 0$ first.

Employing \eqref{2.23} one estimates
\begin{align}
& \|A(iy)\|_{\cB_p(\cH)} = e^{- k y^2} \big\|T^{-iy}ST^{-1}T^{iy}\big\|_{\cB_p(\cH)}
\leq e^{- k y^2 + 2 \pi |y|} \big\|ST^{-1}\big\|_{\cB_p(\cH)}     \no \\
& \quad \leq e^{k^{-1} \pi^2} \big\|ST^{-1}\big\|_{\cB_p(\cH)}, \quad y\in\bbR,  \lb{3.19} \\
& \|A(1+ iy)\|_{\cB_p(\cH)} = e^{- k y^2} \big\|\ol{T^{-1-iy}ST^{iy}}\big\|_{\cB_p(\cH)}
= e^{- k y^2} \big\|T^{-iy}(S^*T^{-1})^*T^{iy}\big\|_{\cB_p(\cH)}   \no \\
& \quad = e^{k^{-1} \pi^2} \big\|S^* T^{-1}\big\|_{\cB_p(\cH)}, \quad y\in\bbR,  \lb{3.20} \\
& \|A (z)\|_{\cB(\cH)} = e^{- k |\Im(z)|^2 + k \Re(z)[\Re(z) - 1]}   \no \\
& \qquad \times \big\|T^{-i \Im(z)}\big(|S^*|^{\Re(z)}T^{-\Re(z)}\big)^* U_S
|S|^{1-\Re(z)} T^{-1+\Re(z)} T^{i \Im(z)}\big\|_{\cB(\cH)}  \no \\
& \quad \leq e^{- k |\Im(z)|^2 + k \Re(z) [\Re(z) - 1] + 2 \pi |\Im(z)|}    \no \\
& \qquad \times \big\||S^*|^{\Re(z)} T^{-\Re(z)}\big\|_{\cB(\cH)} \,
\big\||S|^{1-\Re(z)} T^{-1+\Re(z)}\big\|_{\cB(\cH)}   \no \\
& \quad \leq C, \quad z \in \ol\Sigma,    \lb{3.21}
\end{align}
where $C=C(S,T)>0$ is a finite constant, independent of $z \in \ol\Sigma$, employing
\eqref{2.48} and \eqref{2.48b}. Here again we used the generalized polar
decomposition \eqref{2.41} for $S$ (with $\alpha = \Re(z)$).

Applying the Hadamard three-lines estimate \eqref{3.5} to $A(\cdot)$ then yields the first relation in \eqref{3.13} and the estimate \eqref{3.14}.
\end{proof}
%%%%%%%%%%%%%

In the special case where $T \geq 0$ and $S=S^* \in \cB(\cH)$, the second estimate \eqref{3.14a}
recovers the result \cite[Lemma\ 15]{Su13} (see also \cite[Lemma~5.10]{LPS10}). For applications 
of \eqref{3.14a} to scattering theory we refer, for instance, to \cite[Appendix~1]{RS77}.

%%%%%%%%%%%%%
\begin{corollary}  \lb{c3.3}
In addition to Hypothesis \ref{h2.8}, let $p\in [1,\infty)$ and assume that
\begin{equation}
ST_1^{-1} \in \cB_p(\cH_1,\cH_2), \quad S^*T_2^{-1} \in \cB_p(\cH_2,\cH_1).     \lb{3.24}
\end{equation}
Then $T_2^{-z}ST_1^{-1+z}$ defined on $\dom(T_1)$, $z\in\ol\Sigma$, is closable, and
\begin{equation}
\ol{T_2^{-z}ST_1^{-1+z}} \in \cB_p(\cH_1,\cH_2),  \quad  z \in \ol \Sigma.   \lb{3.25} \\
\end{equation}
In addition, given $ k \in (0,\infty)$, one obtains
\begin{align}
& \big\|\ol{T_2^{-z}ST_1^{-1+z}}\big\|_{\cB_p(\cH_1,\cH_2)}
\leq \big\|ST_1^{-1}\big\|_{\cB_p(\cH_1,\cH_2)}^{1-\Re(z)} \,
\big\|S^*T_2^{-1}\big\|_{\cB_p(\cH_2,\cH_1)}^{\Re(z)}    \no \\
& \quad \times \begin{cases} e^{k |\Im(z)|^2 + k \Re(z)[1 - \Re(z)] + k^{-1} \pi^2}, \\
e^{k |\Im(z)|^2 + k \Re(z)[1 - \Re(z)] + (4k)^{-1} \pi^2}, \text{ if $T_1\geq 0$, or $T_2\geq 0$,} \\
1, \text{ if $T_j\geq 0$, $j=1,2$,}
\end{cases} \quad z \in \ol \Sigma,    \lb{3.26}
\end{align}
and
\begin{align}
& \big\|\ol{T_2^{-x}ST_1^{-1+x}}\big\|_{\cB_p(\cH_1,\cH_2)}
\leq \big\|ST_1^{-1}\big\|_{\cB_p(\cH_1,\cH_2)}^{1- x} \,
\big\|S^*T_2^{-1}\big\|_{\cB_p(\cH_2,\cH_1)}^x    \no \\
& \quad \times \begin{cases} e^{2 \pi [x(1-x)]^{1/2}}, \\
e^{\pi [x(1-x)]^{1/2}}, \text{ if $T_1\geq 0$, or $T_2\geq 0$,} \\
1, \text{ if $T_j\geq 0$, $j=1,2$,}
\end{cases} \quad x \in [0,1].    \lb{3.27}
\end{align}
\end{corollary}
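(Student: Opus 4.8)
The plan is to reduce Corollary \ref{c3.3} to Theorem \ref{t3.2} by exactly the same block-operator device already used in the proof of Corollary \ref{c2.9}. That is, first I would set $\boldsymbol{\cH} = \cH_1 \oplus \cH_2$ and introduce $\textbf{S}$, $\textbf{S}^*$, $\textbf{T}$, $\textbf{T}^{-1}$ precisely as in \eqref{2.58}--\eqref{2.60}. As verified in the proof of Corollary \ref{c2.9}, the pair $(\textbf{S}, \textbf{T})$ satisfies Hypothesis \ref{h2.1}, with $\textbf{S}$ closed, $\textbf{T}$ self-adjoint with bounded inverse, and $\dom(\textbf{S}) \cap \dom(\textbf{S}^*) \supseteq \dom(\textbf{T})$. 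Moreover $\textbf{T} \geq 0$ whenever both $T_1 \geq 0$ and $T_2 \geq 0$.

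Next I would record the trace-ideal input: since the off-diagonal blocks of $\textbf{S}\textbf{T}^{-1}$ and $\textbf{S}^*\textbf{T}^{-1}$ are $ST_1^{-1}$ and $S^*T_2^{-1}$ respectively (see the displays preceding \eqref{2.62}), the hypothesis \eqref{3.24} translates into $\textbf{S}\textbf{T}^{-1} \in \cB_p(\boldsymbol{\cH})$ and $\textbf{S}^*\textbf{T}^{-1} \in \cB_p(\boldsymbol{\cH})$, with
\begin{equation*}
\big\|\textbf{S}\textbf{T}^{-1}\big\|_{\cB_p(\boldsymbol{\cH})} = \big\|ST_1^{-1}\big\|_{\cB_p(\cH_1,\cH_2)}, \quad \big\|\textbf{S}^*\textbf{T}^{-1}\big\|_{\cB_p(\boldsymbol{\cH})} = \big\|S^*T_2^{-1}\big\|_{\cB_p(\cH_2,\cH_1)},
\end{equation*}
the $\cB_p$-norm identities holding because a block operator supported in a single off-diagonal corner has the same singular values as its nonzero block. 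Thus the full hypothesis of Theorem \ref{t3.2} is met for the pair $(\textbf{S}, \textbf{T})$.

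Then I would simply apply Theorem \ref{t3.2} to $(\textbf{S}, \textbf{T})$. This gives closability of $\textbf{T}^{-z}\textbf{S}\textbf{T}^{-1+z}$ on $\dom(\textbf{T})$, the membership $\ol{\textbf{T}^{-z}\textbf{S}\textbf{T}^{-1+z}} \in \cB_p(\boldsymbol{\cH})$, and the estimates \eqref{3.14}, \eqref{3.14a} with the generic constant (and, when $\textbf{T} \geq 0$, i.e.\ $T_j \geq 0$ for both $j$, the constant $1$). Since $\textbf{T}^{-z}\textbf{S}\textbf{T}^{-1+z}$ is again supported in the $(\cH_1 \to \cH_2)$ corner and equals there $T_2^{-z} S T_1^{-1+z}$ on $\dom(T_1)$, this immediately yields \eqref{3.25}, the first and third cases of \eqref{3.26}, and the first and third cases of \eqref{3.27}, again using that restriction to the nonzero block preserves $\cB_p$-norms.

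The only genuine work — and the main (mild) obstacle — is the middle case, where just one of $T_1, T_2$ is nonnegative, which produces the improved exponent $e^{k|\Im(z)|^2 + k\Re(z)[1-\Re(z)] + (4k)^{-1}\pi^2}$ rather than $e^{k^{-1}\pi^2}$. Here the block trick is not by itself sharp, because $\textbf{T} = \diag(T_1, T_2)$ is not nonnegative, so Theorem \ref{t3.2} only delivers the generic bound. To recover the factor $(4k)^{-1}\pi^2$ one must redo the Hadamard three-lines argument directly, as in the proof of Theorem \ref{t3.2}: form $A(z) = e^{kz(z-1)}\ol{T_2^{-z}ST_1^{-1+z}}$, estimate $\|A(iy)\|_{\cB_p}$ and $\|A(1+iy)\|_{\cB_p}$ using \eqref{2.23} for the non-nonnegative factor and \eqref{2.24} (the unitary estimate $\|T_j^{is}\| = 1$) for the nonnegative one — so the boundary bounds carry the factor $e^{-ky^2 + \pi|y|}$ instead of $e^{-ky^2 + 2\pi|y|}$, whence $\sup_y e^{-ky^2+\pi|y|} = e^{(4k)^{-1}\pi^2}$ — check the interior growth bound \eqref{3.2} via the generalized polar decomposition \eqref{2.41} and Heinz's inequality exactly as in \eqref{3.21}, and invoke Theorem \ref{t3.1} (equation \eqref{3.5}). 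Finally, optimizing over $k > 0$ in the resulting bound at real $z = x$ (the exponent $k x(1-x) + (4k)^{-1}\pi^2$ is minimized at $k = \pi/(2[x(1-x)]^{1/2})$, giving $\pi[x(1-x)]^{1/2}$) produces the middle case of \eqref{3.27}. The estimate \eqref{3.15}--\eqref{3.17} for the block pair then specializes, upon restricting to the corner, to the stated $\cB_p$-analogue, completing the proof.
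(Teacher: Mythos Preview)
Your proposal is correct and follows essentially the same route as the paper: reduce to the single-operator case via the $2\times 2$ block construction of Corollary~\ref{c2.9}, verify the $\cB_p$ hypotheses transfer to $(\textbf{S},\textbf{T})$, and invoke Theorem~\ref{t3.2}. The paper's own proof is a one-line reference to this procedure. Your additional explicit treatment of the middle case (exactly one $T_j\ge 0$), where you redo the three-lines estimate using $\|T_j^{iy}\|=1$ for the nonnegative factor to replace $e^{2\pi|y|}$ by $e^{\pi|y|}$ and hence $k^{-1}\pi^2$ by $(4k)^{-1}\pi^2$, is a detail the paper leaves implicit both here and already in the proof of Corollary~\ref{c2.9}.
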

%%%%%%%%%%%%%
\begin{proof}
One can follow the  proof of Corollary \ref{c2.9} step by step replacing
$\cB(\boldsymbol{\cH})$ and $\cB(\cH_j)$ by $\cB_p(\boldsymbol{\cH})$ and
$\cB_p(\cH_j)$, $j=1,2$, respectively, applying Theorem \ref{t3.2} instead of Theorem \ref{t2.7}.
\end{proof}
%%%%%%%%%%%%%

Finally, we recall the following known result in connection with the ideal $\cB_{\infty}(\cH)$:

%%%%%%%%%%%%%
\begin{theorem} [\cite{RS78}, p.\ 115--116] \lb{t3.4}
Suppose that $A(z)\in\cB(\cH)$, $z\in\ol\Sigma$, that $A(\cdot)$ is analytic
on $\Sigma$, weakly continuous on $\ol\Sigma$. Assume that for some
$C_0, C_1 \in (0,\infty)$,
\begin{equation}
\|A(iy)\|_{\cB(\cH)} \leq C_0, \quad \|A(1+iy)\|_{\cB(\cH)} \leq C_1,
\quad y\in\bbR,     \lb{3.28}
\end{equation}
and suppose that for all $f,g\in\cH$, there exist $C_{f,g}\in\bbR$ and
$a_{f,g} \in [0,\pi)$, such that
\begin{equation}
\sup_{z\in\Sigma}\Big[e^{-a_{f,g}|\Im(z)|} \ln(|(f,A(z)g)_{\cH}|)\Big] \leq C_{f,g}.
\lb{3.29}
\end{equation}
In addition, suppose that
\begin{equation}
\text{either } \, A(iy)\in\cB_{\infty}(\cH),  \, \text{ or } \, A(1+iy)\in\cB_{\infty}(\cH)),
\quad y\in\bbR.     \lb{3.30}
\end{equation}
Then,
\begin{equation}
A(z) \in \cB_{\infty}(\cH), \quad z \in \Sigma.    \lb{3.31}
\end{equation}
\end{theorem}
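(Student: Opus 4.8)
The plan is to fix an arbitrary $z_0 \in \Sigma$ and to realize $A(z_0)$ as a limit, in the operator norm of $\cB(\cH)$, of finite-rank operators; since $\cB_\infty(\cH)$ is norm-closed, this yields $A(z_0) \in \cB_\infty(\cH)$, which is \eqref{3.31}. First I would use the substitution $z \mapsto 1-z$, which maps $\Sigma$ onto itself, preserves $|\Im(\cdot)|$ (hence preserves \eqref{3.28} and \eqref{3.29}), and interchanges the boundary lines $\Re(z)=0$ and $\Re(z)=1$, to reduce to the first alternative in \eqref{3.30}: from now on $A(iy) \in \cB_\infty(\cH)$ for all $y \in \bbR$.

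Next, I would fix an increasing sequence $\{P_n\}_{n\in\bbN}$ of finite-rank orthogonal projections in $\cH$ with $\slim_{n\to\infty} P_n f = f$ for all $f \in \cH$, and put $Q_n := I_\cH - P_n$. Since $Q_n f \to 0$ for every $f \in \cH$ while $\|Q_n\|_{\cB(\cH)} = 1$, a strongly null uniformly bounded sequence converges to $0$ uniformly on the precompact set $A(iy)(\{f : \|f\|_\cH \le 1\})$, so
\[
\delta_n(y) := \|Q_n A(iy)\|_{\cB(\cH)} \longrightarrow 0 \ \text{ as } n \to \infty, \ \text{ for each fixed } y \in \bbR ,
\]
while $\delta_n(y) \le \|A(iy)\|_{\cB(\cH)} \le C_0$ by \eqref{3.28} (and $y \mapsto \delta_n(y)$ is Borel, being lower semicontinuous by the weak continuity of $A(\cdot)$). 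For unit vectors $f, g \in \cH$ I then consider
\[
\phi_n(z) := \big(Q_n f, A(z) g\big)_\cH = \big(f, Q_n A(z) g\big)_\cH, \quad z \in \ol\Sigma ,
\]
which is analytic on $\Sigma$, continuous on $\ol\Sigma$, satisfies the growth condition \eqref{2.17} by \eqref{3.29}, and obeys $|\phi_n(iy)| \le \delta_n(y)$ together with $|\phi_n(1+iy)| \le \|A(1+iy)\|_{\cB(\cH)} \le C_1$ on the two edges. The integral form \eqref{2.18} of Hadamard's three-lines theorem (Theorem \ref{t2.4}), together with monotonicity of $\ln$ and positivity of the kernels appearing there, then yields fixed positive weights $w_0, w_1 \in L^1(\bbR)$, depending only on $z_0$ and not on $n, f, g$, with
\[
|\phi_n(z_0)| \le \exp\Big( \int_\bbR \ln(\delta_n(y))\, w_0(y)\, dy + \ln(C_1) \int_\bbR w_1(y)\, dy \Big) .
\]
As the right-hand side is independent of $f$ and $g$, passing to the supremum over unit vectors gives $\|Q_n A(z_0)\|_{\cB(\cH)} \le C_1^{\|w_1\|_{L^1(\bbR)}}\, \exp\big( \int_\bbR \ln(\delta_n(y))\, w_0(y)\, dy \big)$.

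It then remains to show the exponent tends to $-\infty$. The integrands are dominated from above by the fixed integrable function $y \mapsto \ln(C_0)\, w_0(y)$, whereas $\ln(\delta_n(y)) \to -\infty$ for every $y$; hence, by the reverse Fatou lemma and $w_0 > 0$,
\[
\limsup_{n\to\infty} \int_\bbR \ln(\delta_n(y))\, w_0(y)\, dy \le \int_\bbR \Big( \limsup_{n\to\infty} \ln(\delta_n(y)) \Big) w_0(y)\, dy = -\infty .
\]
Therefore $\|A(z_0) - P_n A(z_0)\|_{\cB(\cH)} = \|Q_n A(z_0)\|_{\cB(\cH)} \to 0$ as $n \to \infty$, and since each $P_n A(z_0)$ has finite rank, $A(z_0) \in \cB_\infty(\cH)$.

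The step I expect to be the real obstacle — and the reason the precise estimate \eqref{2.18} is genuinely needed rather than the elementary three-lines bound \eqref{2.21} — is exactly this last passage: the boundary bounds $\delta_n(y)$ on the line $\Re(z)=0$ decay to $0$ only pointwise in $y$, not uniformly, because the family $\{A(iy)\}_{y\in\bbR}$ of compact operators need not be norm-precompact (already a rank-one example whose range vector rotates slowly as $y$ varies keeps $\sup_y \delta_n(y)$ bounded away from $0$), so a uniform three-lines estimate is unavailable. Averaging $\ln(\delta_n)$ against the harmonic-measure density $w_0$ is precisely the device that upgrades pointwise decay of $\delta_n$ to norm decay of $Q_n A(z_0)$; everything else in the argument is routine bookkeeping.
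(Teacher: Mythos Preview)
The paper does not supply its own proof of Theorem~\ref{t3.4}; it merely recalls the statement from \cite[pp.~115--116]{RS78} and adds the remark that the growth hypothesis \eqref{3.29} was accidentally omitted there. So there is no in-paper argument to compare against.

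Your argument is correct and is in fact the standard route to this result. The two ingredients you isolate are exactly the right ones: (a) for each fixed $y$, the finite-rank defect $\delta_n(y)=\|Q_nA(iy)\|_{\cB(\cH)}$ tends to $0$ because $Q_n\to 0$ strongly and $A(iy)$ is compact, while $\delta_n(y)\le C_0$ uniformly; and (b) the Poisson-integral (Hirschman) form \eqref{2.18} of the three-lines theorem turns this merely pointwise decay on the line $\Re(z)=0$ into a bound for $\|Q_nA(z_0)\|_{\cB(\cH)}$ at interior points, via the reverse Fatou step you describe. Your diagnosis that the cruder bound \eqref{2.21} is insufficient here---because it would require $\sup_y\delta_n(y)\to 0$, which can fail---is exactly the point. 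The measurability of $\delta_n$, the reduction to the first alternative in \eqref{3.30} via $z\mapsto 1-z$, and the passage to the sup over unit vectors are all handled cleanly. One cosmetic remark: the kernels in \eqref{2.18} should involve $\cos(\pi\Re(z))$ rather than $\cosh(\pi\Re(z))$ (a typo in the paper), which is precisely what makes your $w_0,w_1$ strictly positive and integrable; you are clearly using the correct Poisson densities for the strip.
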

%%%%%%%%%%%%%

A condition of the type \eqref{3.29} has inadvertently been omitted in
\cite[p.~115--116]{RS78}.

%%%%%%%%%%%%%%%%%%%%%%%%%%%%%%%%%%%%%%
%%%%%%%%%%%%%%%%%%%%%%%%%%%%%%%%%%%%%%
\section{Extensions to Sectorial Operators}  \lb{s4}
%%%%%%%%%%%%%%%%%%%%%%%%%%%%%%%%%%%%%%
%%%%%%%%%%%%%%%%%%%%%%%%%%%%%%%%%%%%%%

In this section we revisit Theorems \ref{t2.3}, \ref{t2.7},
\ref{t3.2}, and Corollaries \ref{c2.9}, \ref{c3.3}, and replace
the self-adjointness hypothesis on $T$ by appropriate sectoriality
assumptions.

We start by recalling the definition of a sectorial operator and
refer, for instance, to \cite[Chs.\ 2, 3, 7]{Ha06} and 
\cite[Chs.\ 2, 16]{Ya10} for a detailed treatment.

%%%%%%%%%%%%
\begin{definition} \lb{d7.3}
Let $T$ be a densely defined, closed, linear operator in $\cH$ and
denote by $S_{\omega} \subset \bbC$, $\omega \in [0,\pi)$, the
open sector
\begin{equation}  \lb{4.1}
S_{\omega} = \begin{cases} \{z\in\bbC \,|\, z \neq 0, \,
|\arg(z)|<\omega\},
& \omega \in (0,\pi), \\
(0,\infty), & \omega = 0,
\end{cases}
\end{equation}
with vertex at $z=0$ along the positive real axis and opening
angle $2 \omega$. The operator $T$ is called {\it sectorial of
angle $\omega \in [0,\pi)$}, denoted by $T \in \Sect(\omega)$, if
\begin{align}
\begin{split}
& (\alpha) \;\; \sigma(T) \subseteq \ol{S_{\omega}},     \lb{4.2} \\
& (\beta) \text{ for all $\omega' \in (\omega,\pi)$, }  \,
 \sup_{z\in \bbC\backslash \ol{S_{\omega'}}}
\big\| z (T - z I_{\cH})^{-1}\big\|_{\cB(\cH)} < \infty.
\end{split}
\end{align}
One calls
\begin{equation}
\omega_T = \min \{\omega \in [0,\pi] \,| \, T \in \Sect(\omega)\},
\lb{4.3}
\end{equation}
the {\it angle of sectoriality of $T$.}
\end{definition}

For the remainder of this section we assume that $T$ is sectorial (that is, 
$T \in \Sect(\omega)$ for some $\omega \in [0,\pi)$) and that $T^{-1} \in \cB(\cH)$.

Then fractional powers $T^{-z}$, with $\Re(z) > 0$, of $T$ can be
defined by a standard Dunford integral in $\cB(\cH)$ (cf., e.g.,
\cite[Sect.\ 2.7.1]{Ya10})
\begin{equation}
T^{-z} = (2\pi i)^{-1}  \ointclockwise _{\Gamma} d \zeta \, \zeta^{-z} (T-
\zeta I_{\cH})^{-1}, \quad \Re(z) > 0,     \lb{4.7}
\end{equation}
using the principal branch of $\zeta^{-z}$, $\{\zeta \in \bbC \,|\, |\arg(\zeta)| < \pi\}$, by 
excluding the negative real axis, with $\Gamma$ surrounding $\sigma(T)$ clockwise 
in $(\bbC \backslash (-\infty,0]) \cap \rho(T)$ (cf.\ \cite[p.~92]{Ya10} for precise details). 
An important property of  $T^{-z}$ is that
\begin{equation}
\text{$T^{-z}$ is a $\cB(\cH)$-valued analytic semigroup on $\{z
\in \bbC \,|\, \Re(z) > 0\}$.} \lb{4.8}
\end{equation}

Defining imaginary powers of $T$ requires a bit more
care. Following \cite[p.~105]{Ya10}, we introduce the imaginary
powers $T^{is}, s \in \mathbb R,$ of $T$ as follows:
\begin{align}
\begin{split} 
& T^{is} f = \slim_{z \to is, \, \Re(z) > 0} T^{-z} f,    \\
& f \in \dom (T^{is})=\Big\{g \in \cH \, \Big| \,  
\slim_{ z \to is, \, \Re(z) > 0} T^{-z} g\,\, \text{exists} \Big\}. 
\end{split} 
\end{align}
We note that one can define imaginary powers of $T$ also more
explicitly as follows: for $s \in \bbR$, one sets as in \cite[p.~153]{Am95},
\begin{equation}
T^{is}f:=\frac{\sin (\pi i s)}{\pi is
}\int_{0}^{\infty}t^{is}(T + t I_{\cH})^{-2} T f \, dt,  \quad f \in \dom (T).
\end{equation}
Then the operator $T^{is}$ is closable for every $s \in \bbR$ and one defines 
\begin{equation}
T^{is}:=\ol{T^{is}|_{\dom (T)}}, \quad s \in \mathbb R.
\end{equation}

We also note that there are several definitions of the fractional (and
imaginary) powers in the literature, see, for instance, \cite[Section~3.2 and
Proposition~3.5.5]{Ha06}, \cite{KZPS76}, \cite[Section~4]{Lu09}, \cite{MS01}, 
\cite[Section~1]{Tr95}, \cite[Section~4]{Am95}. In our setting, all of these
definitions coincide (cf.\ \cite{BGT13}), and we provided the most straightforward one.

To be able to argue as in previous sections one needs to deal
with sectorial operators having {\it bounded imaginary powers $($BIP\,$)$}.

%%%%%%%%%%
\begin{definition}
If $T$ is a sectorial operator on $\cH$ such that $T^{-1} \in
\cB(\cH),$ then $T$ is said to have  {\it bounded imaginary
powers} if $T^{is} \in \cB(\cH)$ for all $s \in \bbR$. This is then 
denoted by $T \in \BIP(\cH)$. 
\end{definition}
%%%%%%%%%%

We recall that if $T$ admits bounded imaginary powers then
$\big\{T^{is}\big\}_{s \in \bbR}$ is a $C_0$-group on $\cH$ (cf.
\cite[Corollary~3.5.7]{Ha06}). Hence, there exist  $\theta \geq 0$
and $N \geq 1$ such that
\begin{equation}\lb{semigroup}
\|T^{is}\|_{\cB(\cH)} \leq N e^{\theta |s|}, \quad s \in \bbR,
\end{equation}
and we write $T \in \BIP (N,\theta)$ in this case. Clearly, 
\begin{equation} 
\BIP (\cH) = \bigcup_{N\ge 1, \, \theta\ge 0} \BIP(N, \theta).
\end{equation} 
We also define the type $\theta_T$ of the $C_0$-group $\big\{T^{is}\big\}_{s\in\bbR}$ by
\begin{equation}\label{thetat}
\theta_T:=\inf \big\{\theta \geq 0 \, \big| \, \text{there exists $N_{\theta} \geq 1$ such that} \, 
\big\|T^{is}\big\|_{\cB(\cH)} \leq N_{\theta}e^{\theta|s|}, \, 
s \in \bbR\big\}.
\end{equation}

The standard example of operators $T$ satisfying $T \in \BIP(\cH)$ (in addition to the situation described in \eqref{2.24}) are provided by strictly positive 
self-adjoint operators bounded from below (in this case $T \in \BIP(1,0)$) and boundedly 
invertible, m-accretive operators $T$ (in this case $T \in \BIP(1,\pi/2)$). One recalls that $T$ 
is said to be {\it m-accretive} (cf.\ \cite[Sect.~C.7]{Ha06}, \cite{Ka61a}, \cite[Sect.~V.3.10]{Ka80}, \cite[Sect.~4.3]{Lu09}, \cite[Ch.~2]{Ta79}) if and only if 
\begin{equation}
\ol{\dom(T)} = \cH, \quad(- \infty, 0) \subset \rho(T), \quad 
\big\|(T + \lambda I_{\cH})^{-1}\big\|_{\cB(\cH)} \leq \lambda^{-1}, \; \lambda > 0.  
\end{equation}

The following extension of \eqref{4.8} will be vital for the remainder of this section: 

%%%%%%%%%%
\begin{theorem} [See, e.g., 
\cite{Am95}, Theorem~4.7.1] \label{semigr} ${}$ \\ 
If $T \in \BIP(\cH)$ then $\big\{ T^{-z} \, \big| \, \Re (z)\geq 0 \big\}$ is a strongly
continuous semigroup in the closed right half-plane $\{z \in \bbC \, | \, \Re(z)\geq 0\}.$
\end{theorem}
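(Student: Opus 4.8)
The plan is to glue together the two structures already in hand. By \eqref{4.8}, $\big\{T^{-z}\,\big|\,\Re(z)>0\big\}$ is a $\cB(\cH)$-valued analytic semigroup on the open right half-plane, hence norm-holomorphic, and in particular norm-continuous, throughout $\{\Re(z)>0\}$; and since $T\in\BIP(\cH)$, the family $\big\{T^{is}\big\}_{s\in\bbR}$ is a $C_0$-group on $\cH$ (cf.\ \cite[Corollary~3.5.7]{Ha06}) satisfying $\big\|T^{is}\big\|_{\cB(\cH)}\le Ne^{\theta|s|}$. These imaginary powers are precisely the boundary values of $T^{-z}$ in the sense of the strong-limit prescription defining $T^{is}$, so that $\big\{T^{-z}\,\big|\,\Re(z)\ge 0\big\}$ is a well-defined family of bounded operators, norm-holomorphic in the interior and strongly continuous along $i\bbR$. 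It remains to show that this family forms a strongly continuous semigroup on the closed half-plane, i.e.\ that the semigroup law holds there and that $z\mapsto T^{-z}g$ is continuous on $\{\Re(z)\ge 0\}$ for every $g\in\cH$ (with $T^{-z}\to I_{\cH}$ as $z\to 0$).

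First I would verify the semigroup law $T^{-z}T^{-w}=T^{-(z+w)}$ for all $z,w$ with $\Re(z),\Re(w)\ge 0$. When $\Re(z),\Re(w)>0$ this is contained in \eqref{4.8}; when $\Re(z)=\Re(w)=0$ it is the group law of $\big\{T^{is}\big\}_{s\in\bbR}$; and in the mixed case, say $\Re(z)>0$ and $\Re(w)=0$, one writes the boundary factor $T^{-w}g$ as $\slim_{w'\to w,\,\Re(w')>0}T^{-w'}g$, pulls the bounded operator $T^{-z}$ through the limit, and uses norm-continuity of the analytic semigroup near the interior point $z+w$ to conclude $T^{-z}T^{-w}g=\slim_{w'\to w,\,\Re(w')>0}T^{-(z+w')}g=T^{-(z+w)}g$; the other mixed case is handled the same way, with the boundary power $T^{-z}$ now written as a strong limit of interior powers. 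Then, for strong continuity: at interior points it is the norm-holomorphy of \eqref{4.8}; at a boundary point $z_0\in i\bbR$, given $z_n\to z_0$ with $\Re(z_n)\ge 0$, a routine subsequence argument reduces matters to the two pure cases $\Re(z_n)\equiv 0$, where the convergence $T^{-z_n}g\to T^{-z_0}g$ is $C_0$-continuity of the group, and $\Re(z_n)>0$ for all $n$, where it is exactly the strong-limit relation defining the boundary power (valid on all of $\cH$ since $T\in\BIP(\cH)$). The same dichotomy at $z_0=0$ yields $T^{-z}g\to g$, which completes the argument.

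The step I expect to be delicate is this boundary matching: one must genuinely check that the $C_0$-group values on $i\bbR$ coincide with the joint-in-$z$ limits of the interior analytic semigroup, and for this both the precise definition of $T^{is}$ and its $C_0$-group property are indispensable — the analytic-semigroup structure on $\{\Re(z)>0\}$ alone does not see the boundary, and the mere pointwise existence of $T^{is}$ does not give joint continuity. A secondary technical point arises only if one additionally wants the local uniform bound $\sup_{0\le\Re(z)\le a}\big(e^{-\theta|\Im(z)|}\|T^{-z}\|_{\cB(\cH)}\big)<\infty$, $a>0$ (which also permits, alternatively, a density argument based on $\dom(T)$): this follows from a three-lines argument in the spirit of the rest of this paper (Theorem~\ref{t2.4} together with a Gaussian regularizer $e^{kz(z-1)}$, after rescaling the strip $\{0\le\Re(z)\le a\}$ to unit width) with edge data $\|T^{iy}\|_{\cB(\cH)}\le Ne^{\theta|y|}$ and $\|T^{-a-iy}\|_{\cB(\cH)}=\|T^{-a}T^{-iy}\|_{\cB(\cH)}\le\|T^{-a}\|_{\cB(\cH)}Ne^{\theta|y|}$, once one notes that the crude estimate $|\zeta^{-z}|\le|\zeta|^{-\Re(z)}e^{\pi|\Im(z)|}$ read off from \eqref{4.7}, which sits exactly at the threshold where the Phragm\'en--Lindel\"of hypothesis of Theorem~\ref{t2.4} fails, is improved by running the contour $\Gamma$ inside a sector $S_{\omega'}$ with $\omega_T<\omega'<\pi$, using $\sigma(T)\subseteq\ol{S_{\omega_T}}$.
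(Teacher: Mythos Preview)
The paper does not supply its own proof of this theorem: it is simply quoted from \cite[Theorem~4.7.1]{Am95} and used as a black box in Section~\ref{s4}. So there is nothing to compare your argument against within the paper.

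That said, your proposal is a sound reconstruction of the standard proof. The decomposition into (i) the norm-holomorphic semigroup on the open half-plane, (ii) the $C_0$-group on the boundary, and (iii) the gluing via the very definition of $T^{is}$ as a strong limit of interior powers, is exactly the mechanism underlying the cited result. Your handling of the semigroup law in the mixed cases and the subsequence reduction for strong continuity at boundary points are both correct. The point you flag as delicate --- that the boundary values of the analytic semigroup agree with the $C_0$-group --- is indeed the crux, and you are right that it is resolved precisely because the paper \emph{defines} $T^{is}$ as that strong boundary limit, while \cite[Corollary~3.5.7]{Ha06} supplies the group property. Your final paragraph on the locally uniform bound via a three-lines argument is a nice bonus, consistent with the paper's methodology, though as you note it is not required for the bare statement.
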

%%%%%%%%%%

We note that by  \cite[Proposition~7.0.1]{Ha06} (or \cite[p.~101]{Ya10}),
$T \in \Sect(\omega)$ if and only if  $T^* \in \Sect(\omega).$
Moreover, $(T^{z})^*=(T^*)^{\bar z}$ and thus
\begin{equation}\label{adj}
 T \in \BIP(N,\theta) \quad \text{if and only if}\quad   T^* \in \BIP (N,\theta).
\end{equation}

Theorem \ref{semigr} together with \eqref{adj} permits us to
use the three-lines theorem in the present, more general setting
of sectorial operators.

In the special case where $T$ is self-adjoint and strictly
positive in $\cH,$ that is, $T \geq \varepsilon I_{\cH}$ for some
$\varepsilon > 0$, $T^\alpha$, $\alpha \in \bbC$, defined on one
hand as sectorial operators as above, and on the other by the
spectral theorem, coincide (cf., e.g.,\cite[Sect.\ 4.3.1]{Lu09},
\cite[Sect.\ 1.18.10]{Tr95}). In particular,
\begin{equation}
\dom(T^\alpha) = \bigg\{f\in\cH\,\bigg|\, \|T^{\alpha} f\|_{\cH}^2
= \int_{[\varepsilon,\infty]} \lambda^{2 \Re(\alpha)}
d\|E_T(\lambda) f\|_{\cH}^2 < \infty\bigg\}, \quad \alpha\in\bbC,
\lb{4.16}
\end{equation}
in this case. Here $\{E_T(\lambda)\}_{\lambda \in \bbR}$ denotes
the family of spectral projections of $T$.

\medskip

In the remainder of this section, we will use the following set of
assumptions:

%%%%%%%%%%%
\begin{hypothesis} \lb{h4.2}
Assume that $T$ is a  sectorial operator in $\cH$ such that
$T^{-1} \in \cB(\cH)$. In addition, we assume that $S$ is a closed
operator in $\cH$ satisfying
\begin{equation}
\dom(S) \supseteq \dom(T), \quad \dom(S^*) \supseteq \dom(T^*).
\lb{4.17}
\end{equation}
\end{hypothesis}
%%%%%%%%%%%

We start with the analog of Theorem \ref{t2.3}:

%%%%%%%%%%%%%
\begin{theorem}   \lb{t4.3}
Assume Hypothesis \ref{h4.2}. Then the following facts hold: \\
$(i)$ The operator $T^{-1}ST$ is well-defined on $\dom(T^2)$, and
hence densely defined in $\cH$,
\begin{equation}
\dom\big(T^{-1}ST\big) \supseteq \dom\big(T^2\big).     \lb{4.18}
\end{equation}
$(ii)$ The relation
\begin{equation}
\big(T^{-1}ST\big)^* = T^* S^* (T^*)^{-1}   \lb{4.19}
\end{equation}
holds, and hence $T^*S^*(T^*)^{-1}$ is closed in $\cH$. \\
$(iii)$ One infers that
\begin{equation}
T^{-1}ST \, \text{ is bounded  if and only if } \, (T^{-1}ST)^* =
T^*S^*(T^*)^{-1} \in \cB(\cH).    \lb{4.20}
\end{equation}
In case \eqref{4.20} holds, then
\begin{equation}
\ol{T^{-1}ST} = (T^*S^*(T^*)^{-1})^*, \quad
\big\|\ol{T^{-1}ST}\big\|_{\cB(\cH)} =
\big\|T^*S^*(T^*)^{-1}\big\|_{\cB(\cH)}.    \lb{4.21}
\end{equation}
\end{theorem}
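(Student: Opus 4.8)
The plan is to mimic the proof of Theorem \ref{t2.3} step by step, replacing every appearance of self-adjointness of $T$ by the two facts that are available in the sectorial setting: that $T$ is closed and densely defined with $T^{-1}\in\cB(\cH)$, and that by \cite[Proposition~7.0.1]{Ha06} one has $T\in\Sect(\omega)$ if and only if $T^*\in\Sect(\omega)$, so $T^*$ is also closed, densely defined, and boundedly invertible, with $(T^*)^{-1}=(T^{-1})^*$. The key structural difference is that the adjoint formulas now involve $T^*$ rather than $T$, which is exactly what \eqref{4.19} reflects.

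For part $(i)$, I would argue as in \eqref{2.7}--\eqref{2.9}: if $g\in\dom(T^2)$ then $Tg\in\dom(T)\subseteq\dom(S)$ by \eqref{4.17}, so $g\in\dom(T^{-1}ST)$, giving \eqref{4.18}; density follows since $T^2$ is closed and densely defined (as $T$ is). For part $(ii)$, I would first apply Remark \ref{r2.2}\,$(ii)$ in the form $(T^{-1}[ST])^*=[ST]^*T^{-*}\supseteq T^*S^*(T^*)^{-1}$, using \eqref{2.0c} since $T^{-1}\in\cB(\cH)$ and \eqref{2.0b} for the product $ST$; note $ST$ is densely defined since its domain contains $\dom(T^2)$. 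For the reverse inclusion I would run the computation of \eqref{2.12}--\eqref{2.16} verbatim: for $f\in\dom((T^{-1}ST)^*)$ and $g\in\dom(T^2)$, write $((T^{-1}ST)^*f,g)_\cH=(f,T^{-1}STg)_\cH=((T^{-1})^*f,STg)_\cH=(S^*(T^{-1})^*f,Tg)_\cH$, use that $\dom(T^2)$ is a core for $T$ to extend to all $g\in\dom(T)$, deduce $S^*(T^*)^{-1}f=S^*(T^{-1})^*f\in\dom(T^*)$, and conclude $(T^{-1}ST)^*f=T^*S^*(T^*)^{-1}f$. Combining the two inclusions yields \eqref{4.19}, and closedness of $T^*S^*(T^*)^{-1}$ is automatic as it equals an adjoint. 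Part $(iii)$ is then immediate from \cite[Theorem~4.14(a)]{We80} applied to the densely defined operator $T^{-1}ST$ (this gives \eqref{4.20}), and \eqref{4.21} follows since for any densely defined $A$ with $A^*\in\cB(\cH)$ one has $\ol A=A^{**}=(A^*)^*$ and $\|\ol A\|=\|A^*\|$.

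The one genuine subtlety — the main obstacle — is the core argument: in Theorem \ref{t2.3} one uses that $\dom(T^2)$ is a core for $T$, which for self-adjoint $T$ is classical. Here $T$ is merely sectorial and boundedly invertible, so I would need to justify that $\dom(T^2)$ is still a core for $T$. This does hold: since $T^{-1}\in\cB(\cH)$, the range of $T^{-1}$ is all of $\cH$, and $\dom(T^2)=T^{-1}\dom(T)=T^{-1}\ran(T^{-1})=\ran(T^{-2})$; given $g\in\dom(T)$, write $g=T^{-1}h$ with $h\in\cH$, approximate $h$ by $h_n\in\dom(T)$ (density), and set $g_n=T^{-1}h_n\in\dom(T^2)$; then $g_n\to g$ and $Tg_n=h_n\to h=Tg$, so $g_n\to g$ in the graph norm of $T$. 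Alternatively one can observe that $\dom(T^2)$ is invariant under the bounded operators $(I+\varepsilon T)^{-1}$ which converge strongly to $I$ on $\dom(T)$ in graph norm. Either way the core property is the lemma that must be inserted; once it is in place, the rest of the proof is a line-by-line transcription of the proof of Theorem \ref{t2.3} with $T^*$ in place of the second occurrence of $T$.

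Note that, unlike Theorem \ref{t2.3}, there is no statement here about $T^{-1}S^*T$ and no "interchange $S$ and $S^*$" symmetry, because Hypothesis \ref{h4.2} is not symmetric in $S,S^*$ relative to $T$ (it pairs $\dom(S)\supseteq\dom(T)$ with $\dom(S^*)\supseteq\dom(T^*)$); so I would not attempt to prove analogs of \eqref{2.5}, \eqref{2.6}, and the proof ends after establishing $(i)$--$(iii)$ as stated.
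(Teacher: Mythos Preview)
Your proposal is correct and matches the paper's own proof essentially line by line: the paper explicitly says one can follow the proof of Theorem~\ref{t2.3} verbatim once one knows that $\dom(T^2)$ is a core for $T$, and then displays the computation \eqref{4.22}, which is exactly the inner-product identity you wrote. The only difference is that the paper cites \cite[Theorem~3.1.1]{Ha06} for the core property, whereas you supply a short direct argument (approximate $h=Tg$ by $h_n\in\dom(T)$ and set $g_n=T^{-1}h_n$); your elementary justification is perfectly valid and in fact renders the density claim in part~$(i)$ self-contained.
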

%%%%%%%%%%%%%
\begin{proof}
Since $\dom\big(T^2\big)$ is an
operator core for $T$ (cf.\ \cite[Theorem~3.1.1]{Ha06}), one can follow the proof of 
Theorem \ref{t2.3} line by line. To illustrate this claim we just mention,
for instance, the analog of \eqref{2.12} which now turns into
\begin{align}
& \big(\big(T^{-1}ST\big)^*f,g\big)_{\cH}  = \big(f,T^{-1}ST
g\big)_{\cH} = \big((T^*)^{-1}f,STg\big)_{\cH}
= \big(S^*(T^*)^{-1}f, Tg\big)_{\cH},  \no \\
& \hspace*{2.8cm} f\in \dom\big(\big(T^{-1}ST\big)^*\big), \; g\in
\dom\big(T^2\big) \subseteq \dom\big(T^{-1}ST\big), \lb{4.22}
\end{align}
and hence once again extends to all $g \in \dom(T)$ as before in
\eqref{2.13}.
\end{proof}
%%%%%%%%%%%%%

Next, we turn to the analog of Theorem \ref{t2.7} and recall the
notation used in \eqref{semigroup}:

%%%%%%%%%%%%%
\begin{theorem}   \lb{t4.4}
Assume Hypothesis \ref{h4.2}. If $T \in \BIP(N,\theta),$ then
$T^{-z}ST^{-1+z}$, $z\in\ol\Sigma$, defined on $\dom(T)$, is
closable in $\cH$, and
\begin{align}
\begin{split}
\ol{T^{-z}ST^{-1+z}} &= T^{-i \Im(z)} \big[|S^*|^{\Re(z)} (T^*)^{- \Re(z)}\big]^* U_S    \\
& \quad \times |S|^{1-\Re(z)} T^{-1+\Re(z)} T^{i \Im(z)} \in
\cB(\cH), \quad z \in \ol \Sigma.   \lb{4.23}
\end{split}
\end{align}
In addition, given $k \in (0, \infty)$, one obtains
\begin{align}
\begin{split}
\big\|\ol{T^{-z}ST^{-1+z}}\big\|_{\cB(\cH)}
& \leq N^2 e^{k (\Im(z))^2 + k \Re(z)[1-\Re(z)] + k^{-1} \theta^2}     \\
& \quad \times \big\|ST^{-1}\big\|_{\cB(\cH)}^{1-\Re(z)} \,
\big\|S^*(T^*)^{-1}\big\|_{\cB(\cH)}^{\Re(z)}, \quad z \in \ol
\Sigma,    \lb{4.24}
\end{split}
\end{align}
and
\begin{equation}
\big\|\ol{T^{-x}ST^{-1+x}}\big\|_{\cB(\cH)} \leq N^2 e^{2 \theta
[x(1-x)]^{1/2}} \big\|ST^{-1}\big\|_{\cB(\cH)}^{1-x} \,
\big\|S^*(T^*)^{-1}\big\|_{\cB(\cH)}^{x}, \quad x \in [0,1].
\lb{4.25}
\end{equation}
\end{theorem}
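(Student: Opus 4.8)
The plan is to mimic the proof of Theorem~\ref{t2.7} step by step, using Theorem~\ref{semigr} (strong continuity of $\{T^{-z}\}_{\Re(z)\ge0}$) and \eqref{adj} in place of the spectral theorem for self-adjoint $T$, and using the bound \eqref{semigroup}, $\|T^{is}\|_{\cB(\cH)}\le Ne^{\theta|s|}$ (together with the analogous bound for $T^*$, which holds with the \emph{same} $N,\theta$ by \eqref{adj}), in place of the estimate \eqref{2.23}. First I would establish the factorization \eqref{4.23}: since $\dom(S)\supseteq\dom(T)$ and $S,T$ are closed, $S$ is $T$-bounded, so as in \eqref{2.46}--\eqref{2.48b} Heinz's inequality gives $\dom(|S|^\alpha)\supseteq\dom(|T|^\alpha)$ and $\dom(|S^*|^\alpha)\supseteq\dom(|T^*|^\alpha)$ for $\alpha\in[0,1]$, with the corresponding bounded operators $|S|^{1-\Re(z)}T^{-1+\Re(z)}$ and $|S^*|^{\Re(z)}(T^*)^{-\Re(z)}\in\cB(\cH)$. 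Here one must check that $\dom(|T|^\alpha)=\dom(T^\alpha)$ and that the relevant fractional powers of the sectorial operator $T$ are well behaved on the appropriate domains; since $T^{-1}\in\cB(\cH)$ this is routine from the Dunford calculus. Then inserting the generalized polar decomposition \eqref{2.41} for $S$ with $\alpha=\Re(z)$, exactly as in \eqref{2.49}, gives
\begin{equation*}
T^{-z}ST^{-1+z}\subseteq T^{-i\Im(z)}\big(|S^*|^{\Re(z)}(T^*)^{-\Re(z)}\big)^*U_S|S|^{1-\Re(z)}T^{-1+\Re(z)}T^{i\Im(z)}\in\cB(\cH),
\end{equation*}
using $\big(|S^*|^{\Re(z)}(T^*)^{-\bar z}\big)^*\supseteq |S^*|^{\Re(z)}(T^*)^{-\bar z}$-type manipulations together with $\big[|S^*|^{\Re(z)}(T^*)^{-\ol z}\big]^*=T^{-i\Im(z)}\big(|S^*|^{\Re(z)}(T^*)^{-\Re(z)}\big)^*$; this yields closability of $T^{-z}ST^{-1+z}$ on $\dom(T)$ and the identity \eqref{4.23}.

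Next I would set, in analogy with \eqref{2.50},
\begin{equation*}
\phi_k(z)=e^{kz(z-1)}\big(Tf,T^{-1-z}ST^{-2+z}Tg\big)_{\cH}=e^{kz(z-1)}\big(f,\ol{T^{-z}ST^{-1+z}}\,g\big)_{\cH},\quad f,g\in\dom(T),
\end{equation*}
for $k\in(0,\infty)$, and verify that $\phi_k$ is analytic on $\Sigma$, continuous on $\ol\Sigma$, using Theorem~\ref{semigr} for the strong continuity of $z\mapsto T^{-1-z}$ and $z\mapsto T^{-2+z}$ up to the boundary, and \eqref{4.23} for the boundedness that makes the pairing meaningful. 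On the two boundary lines, writing $T^{-iy}=(T^{-iy-\varepsilon})\big|_{\varepsilon\downarrow0}$ and using $\|T^{iy}\|_{\cB(\cH)}\le Ne^{\theta|y|}$, one gets
\begin{align*}
|\phi_k(iy)|&=e^{-ky^2}\big|\big(T^{iy}f,ST^{-1}T^{iy}g\big)_{\cH}\big|\le e^{-ky^2+2\theta|y|}\|ST^{-1}\|_{\cB(\cH)}\|f\|_{\cH}\|g\|_{\cH}\\
&\le N^2e^{k^{-1}\theta^2}\|ST^{-1}\|_{\cB(\cH)}\|f\|_{\cH}\|g\|_{\cH},
\end{align*}
and similarly $|\phi_k(1+iy)|\le N^2e^{k^{-1}\theta^2}\|S^*(T^*)^{-1}\|_{\cB(\cH)}\|f\|_{\cH}\|g\|_{\cH}$, where the first factor of $N$ comes from $T^{iy}$ acting on one slot and the second from $(T^*)^{iy}$ (equivalently $T^{iy}$ on the other slot after taking adjoints), invoking \eqref{adj} to keep the same $N,\theta$ on the $\Re(z)=1$ line, and using $\ol{T^{-1}S}=(S^*T^{-1})^*$ together with $\|\ol{T^{-1}S}\|_{\cB(\cH)}=\|S^*(T^*)^{-1}\|_{\cB(\cH)}$ — this last equality being the sectorial analog of \eqref{2.4}, which follows from Theorem~\ref{t4.3}. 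The interior growth bound $|\phi_k(z)|\le C(f,g,S,T)$ uniformly on $\ol\Sigma$ is obtained exactly as in \eqref{2.53}, inserting $\|T^{-1-\Re(z)}\|_{\cB(\cH)}$, $\|T^{-1+\Re(z)}\|_{\cB(\cH)}$ (bounded on the compact interval $\Re(z)\in[0,1]$ via the Dunford calculus) and absorbing $e^{2\theta|\Im(z)|}$ into the Gaussian $e^{-k|\Im(z)|^2}$.

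Then applying the Hadamard three-lines estimate \eqref{2.21} to $\phi_k/(N^2e^{k^{-1}\theta^2})$ yields
\begin{equation*}
|\phi_k(z)|\le N^2e^{k^{-1}\theta^2}\|ST^{-1}\|_{\cB(\cH)}^{1-\Re(z)}\|S^*(T^*)^{-1}\|_{\cB(\cH)}^{\Re(z)}\|f\|_{\cH}\|g\|_{\cH},\quad z\in\ol\Sigma,
\end{equation*}
for all $f,g\in\dom(T)$; unwinding the definition of $\phi_k$ and using density of $\dom(T)$ together with $\ol{T^{-z}ST^{-1+z}}\in\cB(\cH)$ gives \eqref{4.24}. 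Finally, \eqref{4.25} follows by restricting to $z=x\in[0,1]$, noting $(\Im(z))^2=0$, and optimizing $k\mapsto kx(1-x)+k^{-1}\theta^2$ over $k>0$, whose minimum $2\theta[x(1-x)]^{1/2}$ is attained at $k=\theta[x(1-x)]^{-1/2}$ (with the degenerate cases $x\in\{0,1\}$ handled directly). The main obstacle I anticipate is purely technical rather than conceptual: carefully justifying the boundary continuity and the domain bookkeeping for the sectorial fractional powers — in particular checking that $\dom(|T|^\alpha)=\dom(T^\alpha)$, that Heinz's inequality applies in the form \eqref{2.47}--\eqref{2.48b} with $|T|$ replaced by the sectorial $T$, and that the adjoint identities \eqref{4.23} hold with the correct operator closures — since the spectral-theorem crutch used in Section~\ref{s2} is no longer available and must be replaced by Theorem~\ref{semigr}, \eqref{adj}, and the Dunford functional calculus.
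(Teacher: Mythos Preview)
Your approach is essentially identical to the paper's, and the overall strategy is correct. However, there is one concrete error in the adaptation from the self-adjoint case: your definition of $\phi_k$ takes $f,g\in\dom(T)$ and writes
\[
(Tf,T^{-1-z}ST^{-2+z}Tg)_{\cH}=(f,\ol{T^{-z}ST^{-1+z}}\,g)_{\cH},
\]
but this identity fails when $T\neq T^*$. Moving the bounded operator $T^{-1-z}$ across the inner product yields $((T^*)^{-1-\bar z}Tf,\cdot)_{\cH}$, and $(T^*)^{-1}Tf\neq f$ in general. The paper fixes exactly this by taking $f\in\dom(T^*)$ and putting $T^*f$ in the left slot (cf.\ \eqref{4.26}), so that $(T^*)^{-1-\bar z}T^*f=(T^*)^{-\bar z}f$ by the semigroup property for $T^*$; this gives the correct identity and lets one conclude via density of $\dom(T^*)$. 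Your boundary computation $|\phi_k(iy)|=e^{-ky^2}\big|(T^{iy}f,ST^{-1}T^{iy}g)_{\cH}\big|$ inherits the same slip: the left entry should be $(T^*)^{iy}f$, not $T^{iy}f$ (you acknowledge this in words but not in the formula). Similarly, the correct adjoint relation is $\ol{T^{-1}S}=(S^*(T^*)^{-1})^*$, not $(S^*T^{-1})^*$. Once you replace $f\in\dom(T)$ by $f\in\dom(T^*)$ and $Tf$ by $T^*f$ throughout, your argument coincides with the paper's.
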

%%%%%%%%%%%%%
\begin{proof}
Closely examining the first part of the proof of Theorem
\ref{t2.7} based on Heinz's inequality, one notes that everything
up to \eqref{2.49} goes through without any change, implying the
closability of $T^{-z}ST^{-1+z}$ and the validity of \eqref{4.23}.

Next, one defines
\begin{align}
\begin{split}
\phi_k (z) = e^{k z (z-1)} \big(T^*f,T^{-1-z} S T^{-2+z}
Tg\big)_{\cH}
= e^{k z (z-1)} \big(f, \ol{T^{-z}ST^{-1+z}} g\big)_{\cH},&     \\
f \in \dom(T^*), \; g \in \dom(T), \; z \in \ol \Sigma, \; k \in
(0,\infty).&   \lb{4.26}
\end{split}
\end{align}
Then, employing \eqref{semigroup} and \eqref{adj}, one estimates
\begin{align}
& |\phi_k (iy)| = e^{- k y^2} \big|\big((T^*)^{iy}f, ST^{-1} T^{iy}g\big)_{\cH}\big|    \no \\
& \quad \leq e^{- k y^2} N^2 e^{2 \theta |y|}
\big\|ST^{-1}\big\|_{\cB(\cH)} \,
\|f\|_{\cH} \, \|g\|_{\cH}    \no \\
& \quad \leq N^2 e^{k^{-1} \theta^2}
\big\|ST^{-1}\big\|_{\cB(\cH)} \, \|f\|_{\cH} \, \|g\|_{\cH},
\quad y\in\bbR,  \lb{4.27} \\
& |\phi_k (1+ iy)| = e^{- k y^2} \big|\big((T^*)^{iy}f, \ol{T^{-1}S} T^{iy}g\big)_{\cH}\big|   \no \\
& \quad \leq e^{- k y^2} N^2 e^{2 \theta |y|}
\big\|\ol{T^{-1}S}\big\|_{\cB(\cH)} \,
\|f\|_{\cH} \, \|g\|_{\cH}   \no \\
& \quad = e^{- k y^2} N^2 e^{2 \theta |y|} \big\|S^*
(T^*)^{-1}\big\|_{\cB(\cH)} \, \|f\|_{\cH} \, \|g\|_{\cH},
\no \\
& \quad \leq N^2 e^{k^{-1} \theta^2} \big\|S^*
(T^*)^{-1}\big\|_{\cB(\cH)} \, \|f\|_{\cH} \, \|g\|_{\cH},
\quad y\in\bbR,  \lb{4.28} \\
& |\phi_k (z)| = e^{- k (\Im(z))^2 + k \Re(z) [\Re(z) - 1]} \big|\big(T^*f,T^{-1-z} S T^{-2+z} Tg\big)_{\cH}\big|    \no \\
& \quad \leq e^{- k (\Im(z))^2 + k \Re(z) [\Re(z) - 1]}
\big\|T^{-1 - \Re(z) - i \Im(z)}\big\|_{\cB(\cH)} \,
\big\|ST^{-1}\big\|_{\cB(\cH)}     \no \\
& \qquad \times \big\|T^{-1 + \Re(z) + i \Im(z)}\big\|_{\cB(\cH)} \, \|T^* f\|_{\cH} \, \|T g\|_{\cH}   \no \\
& \quad \leq e^{- k (\Im(z))^2 + k \Re(z) [\Re(z) - 1]} N^2 e^{2
\theta |\Im(z)|}
\big\|T^{-1-\Re(z)}\big\|_{\cB(\cH)}  \no \\
& \qquad \times \big\|ST^{-1}\big\|_{\cB(\cH)} \,
\big\|T^{-1+\Re(z)}\big\|_{\cB(\cH)}
\|T^* f\|_{\cH} \, \|T g\|_{\cH}   \no \\
& \quad \leq C_k, \quad f \in \dom(T^*), \; g \in \dom(T), \; z
\in \ol\Sigma,    \lb{4.29}
\end{align}
where $C_k = C_k(f,g,S,T)>0$ is a finite constant, independent of
$z \in \ol\Sigma$.

Applying the Hadamard three-lines estimate \eqref{2.21} to
$\phi(\cdot)$ then yields \eqref{4.24} since $\dom(T)$ and
$\dom(T^*)$ are dense in $\cH$ and $\ol{T^{-z}ST^{-1+z}}
\in\cB(\cH)$, $z\in\ol\Sigma$, by \eqref{4.23}. If $\Im(z)=0$,
optimizing \eqref{4.24} with respect to $k>0$ implies
\eqref{4.25}.
\end{proof}
%%%%%%%%%%%%%

%%%%%%%%
\begin{remark} \lb{r4.6a} 
We recall that by McIntosh's theorem (cf.\ \cite[Corollary~4.3.5]{Ha06}),
one has
\begin{equation}
\theta_T=\omega_T,
\end{equation}
where   $\omega_T$ and $\theta_T$ are defined by \eqref{4.3} and
\eqref{thetat}, respectively. Thus, in principle, one can use
$\omega_T$ to get estimates cruder than \eqref{4.24}, \eqref{4.25}, 
but then in {\it a priori} terms associated with $T$. However, we decided not to pursue
this here. The same remark also concerns the statements in the remainder of 
this section.
\end{remark}
%%%%%%%%%

In the special case where $T \geq 0$ and $S \in \cB(\cH)$, the
estimate \eqref{4.25} recovers \cite[Lemma\ 15]{Su13}.

Again, these results naturally extend to the situation where
$T^{-z} S T^{-1 + z}$ is replaced by $T_2^{-z} S T_1^{-1 + z}$ for
two sectorial operators $T_j$ in $\cH_j$, $j=1,2$, having bounded
imaginary powers, and once more we now illustrate this in the
context of Theorem \ref{t4.4}.

%%%%%%%%%%%%%
\begin{hypothesis} \lb{h4.5}
Assume that $T_j$ are  sectorial operators in $\cH_j$ such that
$T_j^{-1} \in \cB(\cH_j)$, $j=1,2$. In addition, suppose that $S$ is a
closed operator mapping $\dom(S) \subseteq \cH_1$ into $\cH_2$, 
satisfying
\begin{equation}
\dom(S) \supseteq \dom(T_1) \, \text{ and } \, \dom(S^*) \supseteq
\dom(T_2^*).   \lb{4.30}
\end{equation}
\end{hypothesis}
%%%%%%%%%%%%%

Then the analog of Corollary \ref{c2.9} reads as follows:

%%%%%%%%%%%%%
\begin{corollary} \lb{c4.6}
Assume Hypothesis \ref{h4.5}. If  $T_j \in \BIP(N_j,\theta_j)$,
$j=1,2$, then $T_2^{-z}ST_1^{-1+z}$  defined on $\dom(T_1)$,
$z\in\ol\Sigma$, is closable, and
\begin{align}
\begin{split}
\ol{T_2^{-z}ST_1^{-1+z}} &= T_2^{-i \Im(z)} \big[|S^*|^{\Re(z)} (T_2^*)^{- \Re(z)}\big]^* U_S    \\
& \quad \times |S|^{1-\Re(z)} T_1^{-1+\Re(z)} T_1^{i \Im(z)} \in
\cB(\cH_1,\cH_2), \quad z \in \ol \Sigma.   \lb{4.31}
\end{split}
\end{align}
In addition, given $k \in (0, \infty)$, one obtains
\begin{align}
& \big\|\ol{T_2^{-z}ST_1^{-1+z}}\big\|_{\cB(\cH_1,\cH_2)} \leq N_1
N_2
e^{k (\Im(z))^2 + k \Re(z) [1-\Re(z)] + (4k)^{-1} (\theta_1 + \theta_2)^2}   \no \\
& \quad \times \big\|ST_1^{-1}\big\|_{\cB(\cH_1,\cH_2)}^{1-\Re(z)}
\, \big\|S^*(T_2^*)^{-1}\big\|_{\cB(\cH_2,\cH_1)}^{\Re(z)}, \quad
z \in \ol \Sigma,    \lb{4.32}
\end{align}
and
\begin{align}
\begin{split}
& \big\|\ol{T_2^{-x}ST_1^{-1+x}}\big\|_{\cB(\cH_1,\cH_2)}
\leq N_1  N_2  e^{(\theta_1  + \theta_2) [x(1-x)]^{1/2}}     \\
& \quad \times \big\|ST_1^{-1}\big\|_{\cB(\cH_1,\cH_2)}^{1-x} \,
\big\|S^*(T_2^*)^{-1}\big\|_{\cB(\cH_2,\cH_1)}^{x}, \quad x \in
[0,1].    \lb{4.33}
\end{split}
\end{align}
\end{corollary}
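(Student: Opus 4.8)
The plan is to imitate the block-operator doubling used in the proof of Corollary~\ref{c2.9}, but — and this is the only point that is not purely mechanical — to re-run the Hadamard three-lines argument of Theorem~\ref{t4.4} by hand rather than quote that theorem verbatim: a literal application of Theorem~\ref{t4.4} to the doubled pair would only produce a prefactor $\max(N_1,N_2)^2$ and exponent $k^{-1}\max(\theta_1,\theta_2)^2$, which is weaker than the claimed $N_1N_2\,e^{(4k)^{-1}(\theta_1+\theta_2)^2}$. So the genuine work is to see that the strictly triangular structure of the doubled operator makes the sharp constants survive.

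First I would pass, exactly as in \eqref{2.58}--\eqref{2.59}, to $\boldsymbol{\cH}:=\cH_1\oplus\cH_2$, the strictly lower triangular closed operator $\textbf{S}$, and the diagonal operator $\textbf{T}=\diag(T_1,T_2)$. One checks at once that $\textbf{T}$ is sectorial (of angle $\max\{\omega_{T_1},\omega_{T_2}\}$) with $\textbf{T}^{-1}\in\cB(\boldsymbol{\cH})$; that $\textbf{T}^{is}=\diag\big(T_1^{is},T_2^{is}\big)$ is bounded for each $s$, so $\textbf{T}\in\BIP(\boldsymbol{\cH})$; and that, by Hypothesis~\ref{h4.5}, the pair $(\textbf{S},\textbf{T})$ satisfies Hypothesis~\ref{h4.2} (the analog of \eqref{2.61}, now with $\dom(T_2)$ replaced by $\dom(T_2^*)$ on the left-hand side). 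Consequently Theorem~\ref{t4.4} applies to $(\textbf{S},\textbf{T})$, and reading off the $(2,1)$-block of $\textbf{T}^{-z}\textbf{S}\textbf{T}^{-1+z}$ (the only nonvanishing one) exactly as in the proof of Corollary~\ref{c2.9} already delivers the closability of $T_2^{-z}ST_1^{-1+z}$ on $\dom(T_1)$ and the representation \eqref{4.31}.

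For the norm bounds I would go back to the function $\phi_k$ of \eqref{4.26} built from $\textbf{S},\textbf{T}$ and tested against $f=(0,f_2)$, $g=(g_1,0)$ with $f_2\in\dom(T_2^*)$, $g_1\in\dom(T_1)$. The lower triangular form of $\textbf{S}$ collapses $\phi_k$ to $e^{kz(z-1)}\big(T_2^*f_2,\,T_2^{-1-z}ST_1^{-1+z}g_1\big)_{\cH_2}$, and using $(T_j^z)^*=(T_j^*)^{\bar z}$ one gets on the two edges $|\phi_k(iy)|=e^{-ky^2}\big|\big((T_2^*)^{iy}f_2,\,(ST_1^{-1})T_1^{iy}g_1\big)_{\cH_2}\big|$ and $|\phi_k(1+iy)|=e^{-ky^2}\big|\big((T_2^*)^{iy}f_2,\,\ol{T_2^{-1}S}\,T_1^{iy}g_1\big)_{\cH_2}\big|$, where $\big\|\ol{T_2^{-1}S}\big\|_{\cB(\cH_1,\cH_2)}=\big\|S^*(T_2^*)^{-1}\big\|_{\cB(\cH_2,\cH_1)}$ by Remark~\ref{r2.2}\,(ii). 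Here the triangular structure is decisive: the left slot carries only $(T_2^*)^{iy}$ and the right slot only $T_1^{iy}$, so $\|(T_2^*)^{iy}\|\le N_2e^{\theta_2|y|}$ and $\|T_1^{iy}\|\le N_1e^{\theta_1|y|}$ combine to $N_1N_2e^{(\theta_1+\theta_2)|y|}$, and $\sup_{y\in\bbR}\big[-ky^2+(\theta_1+\theta_2)|y|\big]=(4k)^{-1}(\theta_1+\theta_2)^2$ yields edge bounds $N_1N_2e^{(4k)^{-1}(\theta_1+\theta_2)^2}$ times $\|ST_1^{-1}\|$ and $\|S^*(T_2^*)^{-1}\|$ respectively. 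The uniform bound $\sup_{z\in\ol\Sigma}|\phi_k(z)|<\infty$ required by the growth hypothesis \eqref{2.17} is checked as in \eqref{4.29} (split off $T_j^{-i\Im z}$, absorb the factor $e^{\theta_j|\Im z|}$ into $e^{-k(\Im z)^2}$, and use boundedness of $\|T_j^{-s}\|$ for $s$ in compact subsets of $(0,\infty)$). Then the three-lines estimate \eqref{2.21} applied to $\phi_k$, together with $|e^{-kz(z-1)}|=e^{k(\Im z)^2+k\Re z[1-\Re z]}$ and the density of $\dom(T_2^*)$ in $\cH_2$ and $\dom(T_1)$ in $\cH_1$, gives \eqref{4.32}; finally, putting $\Im z=0$ and minimizing $k\,x(1-x)+(4k)^{-1}(\theta_1+\theta_2)^2$ over $k>0$ by the arithmetic--geometric mean inequality gives \eqref{4.33}.

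Thus the main obstacle is precisely this decoupling bookkeeping — ensuring that the left and right test vectors separate the $T_1$- and $T_2$-contributions so that the constants come out as $N_1N_2$ and $\theta_1+\theta_2$ rather than the crude $\max$-based ones a black-box use of Theorem~\ref{t4.4} would give — together with the routine manipulations of adjoints of complex powers and the identity $\big\|\ol{T_2^{-1}S}\big\|=\big\|S^*(T_2^*)^{-1}\big\|$. Everything else is a transcription of the proofs of Theorem~\ref{t4.4} and Corollary~\ref{c2.9}.
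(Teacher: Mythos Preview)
Your proof is correct and follows the same block-operator doubling approach as the paper's one-line proof. Your observation that a black-box application of Theorem~\ref{t4.4} to $(\textbf{S},\textbf{T})$ would yield only the cruder constants $\max(N_1,N_2)^2$ and $k^{-1}\max(\theta_1,\theta_2)^2$, and that one must exploit the triangular structure via test vectors $f=(0,f_2)$, $g=(g_1,0)$ to recover $N_1N_2$ and $(4k)^{-1}(\theta_1+\theta_2)^2$, is precisely the detail the paper's terse ``the formalism applies'' leaves implicit (the same decoupling is already tacitly used in Corollary~\ref{c2.9} to obtain the middle case with $(4k)^{-1}\pi^2$).
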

%%%%%%%%%%%%%
\begin{proof}
Again, the $2 \times 2$ block operator formalism introduced in the
proof of Corollary \ref{c2.9} applies to the case at hand.
\end{proof}
%%%%%%%%%%%%%

We emphasize that \eqref{4.32} is not new, it can be found in
\cite[Lemma~16.3]{Ya10}. Our proof, however, is slightly
different.

Finally, we turn to the analogs of Theorem \ref{t3.2} and
Corollary \ref{c3.3}.

%%%%%%%%%%%%%
\begin{theorem}   \lb{t4.7}
Assume Hypothesis \ref{h4.2}. Moreover, let $p\in[1,\infty)$, and
suppose that
\begin{equation}
ST^{-1} \in \cB_p(\cH), \quad S^*(T^*)^{-1} \in \cB_p(\cH).
\lb{4.34}
\end{equation}
If $T \in BIP(N,\theta),$ then $T^{-z}ST^{-1+z}$, $z\in\ol\Sigma$,
defined on $\dom(T)$, is closable in $\cH$, and
\begin{equation}
\ol{T^{-z}ST^{-1+z}} \in \cB_p(\cH),  \quad  z \in \ol \Sigma.   \lb{4.35} \\
\end{equation}
In addition, given $k \in (0, \infty)$, one obtains
\begin{align}
\begin{split}
\big\|\ol{T^{-z}ST^{-1+z}}\big\|_{\cB_p(\cH)}
& \leq N^2 e^{k (\Im(z))^2 + k \Re(z) [1-\Re(z)] + k^{-1} \theta^2}     \\
& \quad \times \big\|ST^{-1}\big\|_{\cB_p(\cH)}^{1-\Re(z)} \,
\big\|S^*(T^*)^{-1}\big\|_{\cB_p(\cH)}^{\Re(z)},  \quad z \in \ol
\Sigma,    \lb{4.36}
\end{split}
\end{align}
and
\begin{align}
\begin{split}
\big\|\ol{T^{-x}ST^{-1+x}}\big\|_{\cB_p(\cH)} & \leq N^2 e^{2 \theta [x(1-x)]^{1/2}}  \\
& \quad \times \big\|ST^{-1}\big\|_{\cB_p(\cH)}^{1-x} \,
\big\|S^*(T^*)^{-1}\big\|_{\cB_p(\cH)}^{x}, \quad x \in [0,1].
\lb{4.37}
\end{split}
\end{align}
\end{theorem}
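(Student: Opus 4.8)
The plan is to combine the operator-norm bound of Theorem~\ref{t4.4} with the trace-ideal interpolation theorem of Gohberg and Krein (Theorem~\ref{t3.1}), following the template of the proof of Theorem~\ref{t3.2}, with the self-adjoint ingredients \eqref{2.23}, \eqref{2.24} replaced by the $\BIP$ estimate \eqref{semigroup} and the duality relation \eqref{adj}. First I would invoke Theorem~\ref{t4.4}: under Hypothesis~\ref{h4.2} and $T \in \BIP(N,\theta)$ the operator $T^{-z}ST^{-1+z}$ is closable on $\dom(T)$, its closure is given by \eqref{4.23}, lies in $\cB(\cH)$ for every $z \in \ol\Sigma$, and obeys the bound \eqref{4.24}. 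Fixing $k \in (0,\infty)$ I then set
\[
A(z) = e^{k z(z-1)} \, \ol{T^{-z}ST^{-1+z}}, \quad z \in \ol\Sigma,
\]
and observe that $|e^{k z(z-1)}| = e^{-k(\Im(z))^2 - k \Re(z)[1-\Re(z)]}$, so that \eqref{4.24} collapses to the $z$-independent bound $\|A(z)\|_{\cB(\cH)} \le N^2 e^{k^{-1}\theta^2}\|ST^{-1}\|_{\cB(\cH)}^{1-\Re(z)}\|S^*(T^*)^{-1}\|_{\cB(\cH)}^{\Re(z)}$. Hence $A(\cdot)$ is uniformly bounded on $\ol\Sigma$ and analytic on $\Sigma$ (the latter as in the proof of Theorem~\ref{t3.2}, using \eqref{4.23} and the analyticity of $T^{-z}$, $(T^*)^{-z}$ on the open right half-plane, cf.\ \eqref{4.8}), so that the admissibility hypothesis \eqref{3.2} of Theorem~\ref{t3.1} holds with $a_{f,g} = 0$.

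Next I would estimate the two boundary traces. On the line $\Re(z) = 0$ one has $\|A(iy)\|_{\cB_p(\cH)} = e^{-ky^2}\big\|T^{-iy}(ST^{-1})T^{iy}\big\|_{\cB_p(\cH)}$, whence, by \eqref{semigroup} (and \eqref{adj}) and $ST^{-1} \in \cB_p(\cH)$,
\[
\|A(iy)\|_{\cB_p(\cH)} \le e^{-ky^2} N^2 e^{2\theta|y|}\,\|ST^{-1}\|_{\cB_p(\cH)} \le N^2 e^{k^{-1}\theta^2}\,\|ST^{-1}\|_{\cB_p(\cH)}, \quad y \in \bbR,
\]
since $\sup_{y \in \bbR}\big(2\theta|y| - k y^2\big) = k^{-1}\theta^2$. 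On the line $\Re(z) = 1$ one has $\|A(1+iy)\|_{\cB_p(\cH)} = e^{-ky^2}\big\|\ol{T^{-1-iy}ST^{iy}}\big\|_{\cB_p(\cH)}$; writing $\ol{T^{-1-iy}ST^{iy}} = T^{-iy}\big(S^*(T^*)^{-1}\big)^*T^{iy}$ — the identity $\ol{T^{-1}S} = \big(S^*(T^*)^{-1}\big)^*$ being verified just as in the proof of Theorem~\ref{t3.2}, cf.\ also Theorem~\ref{t4.3} — and using $\big\|\big(S^*(T^*)^{-1}\big)^*\big\|_{\cB_p(\cH)} = \big\|S^*(T^*)^{-1}\big\|_{\cB_p(\cH)}$ together with \eqref{semigroup}, one obtains $\|A(1+iy)\|_{\cB_p(\cH)} \le N^2 e^{k^{-1}\theta^2}\big\|S^*(T^*)^{-1}\big\|_{\cB_p(\cH)}$ for $y \in \bbR$.

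Finally I would apply Theorem~\ref{t3.1} in the case $p_0 = p_1 = p$, with $C_0 = N^2 e^{k^{-1}\theta^2}\|ST^{-1}\|_{\cB_p(\cH)}$ and $C_1 = N^2 e^{k^{-1}\theta^2}\big\|S^*(T^*)^{-1}\big\|_{\cB_p(\cH)}$, the growth condition \eqref{3.2} having been checked above. This yields $A(z) \in \cB_p(\cH)$ together with $\|A(z)\|_{\cB_p(\cH)} \le C_0^{1-\Re(z)} C_1^{\Re(z)}$ for $z \in \ol\Sigma$; dividing by $e^{k z(z-1)}$ (i.e.\ multiplying by $e^{k(\Im(z))^2 + k\Re(z)[1-\Re(z)]}$) gives \eqref{4.35} and \eqref{4.36}. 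For \eqref{4.37} one specializes to $z = x \in [0,1]$, so that the exponent reduces to $k x(1-x) + k^{-1}\theta^2$, and minimizes over $k > 0$, the minimum $2\theta[x(1-x)]^{1/2}$ being attained at $k = \theta[x(1-x)]^{-1/2}$ when $x \in (0,1)$ (the endpoints being trivial). I do not expect a genuine obstacle: the argument is a faithful transcription of the proof of Theorem~\ref{t3.2}, with Theorem~\ref{t2.7} replaced by Theorem~\ref{t4.4} and \eqref{2.23}, \eqref{2.24} replaced by \eqref{semigroup}, \eqref{adj}. The only point requiring a little care is the bookkeeping of closures and adjoints on the line $\Re(z) = 1$ in the non-self-adjoint setting, which is precisely the content of Theorem~\ref{t4.3} (equations \eqref{4.19}--\eqref{4.21}).
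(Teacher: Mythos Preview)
Your proposal is correct and follows essentially the same route as the paper: define $A_k(z) = e^{kz(z-1)}\,\ol{T^{-z}ST^{-1+z}}$, estimate its $\cB_p$-norm on the two boundary lines via \eqref{semigroup} and the identity $\ol{T^{-1}S} = (S^*(T^*)^{-1})^*$, verify a uniform $\cB(\cH)$ bound on $\ol\Sigma$, and apply Theorem~\ref{t3.1} with $p_0 = p_1 = p$, then optimize over $k$. The only cosmetic difference is that you obtain the uniform $\cB(\cH)$ bound by quoting \eqref{4.24} directly, whereas the paper reproduces the polar-decomposition estimate \eqref{4.23} once more (cf.\ \eqref{4.42}); your shortcut is entirely legitimate.
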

%%%%%%%%%%%%%
\begin{proof}
First we note that Theorem \ref{t4.4} applies and hence
\eqref{4.23}--\eqref{4.25} are at our disposal. Next, one
introduces
\begin{equation}
A_k (z) = e^{k z (z-1)} \ol{T^{-z}ST^{-1+z}},  \quad z \in \ol
\Sigma, \; k \in (0, \infty),   \lb{4.39}
\end{equation}
and estimates
\begin{align}
& \|A_k (iy)\|_{\cB_p(\cH)} = e^{- k y^2}
\big\|T^{-iy}ST^{-1}T^{iy}\big\|_{\cB_p(\cH)}
\leq e^{- k y^2} N^2 e^{2 \theta |y|} \big\|ST^{-1}\big\|_{\cB_p(\cH)}   \no \\
& \quad \leq N^2 e^{k^{-1} \theta^2} \big\|ST^{-1}\big\|_{\cB_p(\cH)}, \quad y\in\bbR,  \lb{4.40} \\
& \|A_k (1+ iy)\|_{\cB_p(\cH)} = e^{- k y^2} \big\|\ol{T^{-1-iy}ST^{iy}}\big\|_{\cB_p(\cH)}   \no \\
& \quad = e^{- k y^2}
\big\|T^{-iy}(S^*(T^*)^{-1})^*T^{iy}\big\|_{\cB_p(\cH)}
\leq e^{- k y^2} N^2 e^{2 \theta |y|} \big\|S^* (T^*)^{-1}\big\|_{\cB_p(\cH)}   \no \\
& \quad \leq N^2 e^{k^{-1} \theta^2} \big\|S^* (T^*)^{-1}\big\|_{\cB_p(\cH)}, \quad y\in\bbR,  \lb{4.41} \\
& \|A_k (z)\|_{\cB(\cH)} = e^{- k (\Im(z))^2 + k \Re(z) [\Re(z) - 1]}    \no \\
& \qquad \times \big\|T^{-i
\Im(z)}\big(|S^*|^{\Re(z)}(T^*)^{-\Re(z)}\big)^* U_S
|S|^{1-\Re(z)} T^{-1+\Re(z)} T^{i \Im(z)}\big\|_{\cB(\cH)}  \no \\
& \quad \leq e^{- k (\Im(z))^2 + k \Re(z) [\Re(z) - 1]} N^2 e^{2
\theta |\Im(z)|} \,
\big\||S^*|^{\Re(z)} (T^*)^{-\Re(z)}\big\|_{\cB(\cH)} \,    \no \\
& \qquad \times \big\||S|^{1-\Re(z)}
T^{-1+\Re(z)}\big\|_{\cB(\cH)} \leq C_k, \quad z \in \ol\Sigma,
\lb{4.42}
\end{align}
where $C_k=C_k(S,T)>0$ is a finite constant, independent of $z \in
\ol\Sigma$, applying \eqref{2.48} and \eqref{2.48b}. Here we used
again the generalized polar decomposition \eqref{2.41} for $S$
(with $\alpha = \Re(z)$).

Applying the Hadamard three-lines estimate \eqref{3.5} to
$A(\cdot)$ then yields relation \eqref{4.35} and the estimate
\eqref{4.36}. If $\Im(z) = 0$, optimizing \eqref{4.36} with
respect to $k>0$ implies \eqref{4.37}.
\end{proof}
%%%%%%%%%%%%%

%%%%%%%%%%%%%
\begin{corollary}  \lb{c4.8}
In addition to Hypothesis \ref{h4.5}, let $p\in [1,\infty)$ and
assume that
\begin{equation}
ST_1^{-1} \in \cB_p(\cH_1,\cH_2), \quad S^*(T_2^*)^{-1} \in
\cB_p(\cH_2,\cH_1).     \lb{4.43}
\end{equation}
If $T_j \in \BIP(N_j,\theta_j)$, $j=1,2,$ then $T_2^{-z}ST_1^{-1+z}$
defined on $\dom(T_1)$, $z\in\ol\Sigma$, is closable, and
\begin{equation}
\ol{T_2^{-z}ST_1^{-1+z}} \in \cB_p(\cH_1,\cH_2),  \quad  z \in \ol \Sigma.   \lb{4.44} \\
\end{equation}
In addition, given $k \in (0, \infty)$, one obtains
\begin{align}
& \big\|\ol{T_2^{-z}ST_1^{-1+z}}\big\|_{\cB_p(\cH_1,\cH_2)} \leq
N_1 N_2
e^{k (\Im(z))^2 + k \Re(z) [1-\Re(z)] + (4k)^{-1} (\theta_1  + \theta_2)^2}   \no \\
& \quad \times
\big\|ST_1^{-1}\big\|_{\cB_p(\cH_1,\cH_2)}^{1-\Re(z)} \,
\big\|S^*(T_2^*)^{-1}\big\|_{\cB_p(\cH_2,\cH_1)}^{\Re(z)}, \quad z
\in \ol \Sigma,    \lb{4.45}
\end{align}
and
\begin{align}
\begin{split}
& \big\|\ol{T_2^{-x}ST_1^{-1+x}}\big\|_{\cB_p(\cH_1,\cH_2)}
\leq N_1 N_2  e^{(\theta_1 + \theta_2) [x(1-x)]^{1/2}}   \\
& \quad \times \big\|ST_1^{-1}\big\|_{\cB_p(\cH_1,\cH_2)}^{1-x} \,
\big\|S^*(T_2^*)^{-1}\big\|_{\cB_p(\cH_2,\cH_1)}^{x},  \quad x \in
[0,1].    \lb{4.46}
\end{split}
\end{align}
\end{corollary}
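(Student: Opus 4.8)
The plan is to reduce the statement to the single-operator results of Theorems \ref{t4.4} and \ref{t4.7} via the $2\times 2$ block operator device already used in the proofs of Corollaries \ref{c2.9} and \ref{c3.3}. Put $\boldsymbol{\cH} := \cH_1 \oplus \cH_2$, and introduce $\textbf{T} = \diag(T_1, T_2)$ on $\dom(T_1) \oplus \dom(T_2)$ together with $\textbf{S} = \big(\begin{smallmatrix} 0 & 0 \\ S & 0 \end{smallmatrix}\big)$ on $\dom(S) \oplus \cH_2$, so that $\textbf{S}^* = \big(\begin{smallmatrix} 0 & S^* \\ 0 & 0 \end{smallmatrix}\big)$ on $\cH_1 \oplus \dom(S^*)$. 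First I would record the routine verifications: $\textbf{T}$ is sectorial (of angle $\max(\omega_{T_1}, \omega_{T_2})$) with $\textbf{T}^{-1} = \diag(T_1^{-1}, T_2^{-1}) \in \cB(\boldsymbol{\cH})$; since $\textbf{T}^{is} = \diag(T_1^{is}, T_2^{is})$ one has $\textbf{T} \in \BIP(\boldsymbol{\cH})$ (and, cf.\ \eqref{adj}, also $\textbf{T}^* \in \BIP(\boldsymbol{\cH})$), say $\textbf{T} \in \BIP(\max(N_1,N_2), \max(\theta_1,\theta_2))$; the inclusions \eqref{4.30} give $\dom(\textbf{S}) \supseteq \dom(\textbf{T})$ and $\dom(\textbf{S}^*) \supseteq \dom(\textbf{T}^*)$, i.e., the pair $(\textbf{S}, \textbf{T})$ satisfies Hypothesis \ref{h4.2}; and \eqref{4.43} yields $\textbf{S}\textbf{T}^{-1} = \big(\begin{smallmatrix} 0 & 0 \\ ST_1^{-1} & 0 \end{smallmatrix}\big) \in \cB_p(\boldsymbol{\cH})$ and $\textbf{S}^*(\textbf{T}^*)^{-1} = \big(\begin{smallmatrix} 0 & S^*(T_2^*)^{-1} \\ 0 & 0 \end{smallmatrix}\big) \in \cB_p(\boldsymbol{\cH})$, with $\|\textbf{S}\textbf{T}^{-1}\|_{\cB_p(\boldsymbol{\cH})} = \|ST_1^{-1}\|_{\cB_p(\cH_1,\cH_2)}$ and $\|\textbf{S}^*(\textbf{T}^*)^{-1}\|_{\cB_p(\boldsymbol{\cH})} = \|S^*(T_2^*)^{-1}\|_{\cB_p(\cH_2,\cH_1)}$.

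Granting this, Theorem \ref{t4.4} applied to $(\textbf{S}, \textbf{T})$ gives the closability of $\textbf{T}^{-z}\textbf{S}\textbf{T}^{-1+z}$, $z \in \ol\Sigma$, together with the representation \eqref{4.23} for its closure, and Theorem \ref{t4.7} upgrades this to $\ol{\textbf{T}^{-z}\textbf{S}\textbf{T}^{-1+z}} \in \cB_p(\boldsymbol{\cH})$. Since $|\textbf{S}|$, $|\textbf{S}^*|$, and the partial isometry $U_{\textbf{S}}$ inherit the block form of $\textbf{S}$ (with the convention $|\textbf{S}|^0 = I_{\boldsymbol{\cH}}$ of \eqref{2.41}), one finds $\ol{\textbf{T}^{-z}\textbf{S}\textbf{T}^{-1+z}} = \big(\begin{smallmatrix} 0 & 0 \\ \ol{T_2^{-z}ST_1^{-1+z}} & 0 \end{smallmatrix}\big)$; reading off the $(2,1)$-block then simultaneously yields \eqref{4.44} and the representation \eqref{4.31}, exactly as in Corollaries \ref{c2.9} and \ref{c3.3}.

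It remains to establish the constants in \eqref{4.45}--\eqref{4.46}, and here one cannot simply quote the \emph{statement} of Theorem \ref{t4.7} for $\textbf{T}$: that would only produce $\max(N_1,N_2)^2$ and $k^{-1}\max(\theta_1,\theta_2)^2$ in the exponent, not $N_1 N_2$ and $(4k)^{-1}(\theta_1+\theta_2)^2$. Instead I would re-run the three-lines argument from the proof of Theorem \ref{t4.7}, now exploiting the off-diagonal form of $\textbf{S}$. With $A_k(z) = e^{kz(z-1)} \ol{\textbf{T}^{-z}\textbf{S}\textbf{T}^{-1+z}}$ one computes $A_k(iy) = e^{-ky^2}\big(\begin{smallmatrix} 0 & 0 \\ T_2^{-iy}(ST_1^{-1})T_1^{iy} & 0 \end{smallmatrix}\big)$, so by \eqref{semigroup}, $\|A_k(iy)\|_{\cB_p(\boldsymbol{\cH})} \le e^{-ky^2}\|T_2^{-iy}\|_{\cB(\cH_2)}\|ST_1^{-1}\|_{\cB_p(\cH_1,\cH_2)}\|T_1^{iy}\|_{\cB(\cH_1)} \le N_1 N_2 e^{(4k)^{-1}(\theta_1+\theta_2)^2}\|ST_1^{-1}\|_{\cB_p(\cH_1,\cH_2)}$, and likewise, rewriting $\ol{\textbf{T}^{-1-iy}\textbf{S}\textbf{T}^{iy}} = \textbf{T}^{-iy}(\textbf{S}^*(\textbf{T}^*)^{-1})^*\textbf{T}^{iy}$ as in \eqref{4.41} and using adjoint-invariance of the Schatten norm, $\|A_k(1+iy)\|_{\cB_p(\boldsymbol{\cH})} \le N_1 N_2 e^{(4k)^{-1}(\theta_1+\theta_2)^2}\|S^*(T_2^*)^{-1}\|_{\cB_p(\cH_2,\cH_1)}$; the requisite uniform bound $\|A_k(z)\|_{\cB(\boldsymbol{\cH})} \le C_k$ on $\ol\Sigma$ follows verbatim from \eqref{4.42} via the generalized polar decomposition \eqref{2.41}. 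Feeding these three estimates into Theorem \ref{t3.1} with $p_0 = p_1 = p$, i.e., into \eqref{3.5}, and solving for $\|\ol{\textbf{T}^{-z}\textbf{S}\textbf{T}^{-1+z}}\|_{\cB_p(\boldsymbol{\cH})} = \|\ol{T_2^{-z}ST_1^{-1+z}}\|_{\cB_p(\cH_1,\cH_2)}$ gives \eqref{4.45}; finally, \eqref{4.46} results from setting $\Im(z) = 0$ in \eqref{4.45} and minimizing $k\,x(1-x) + (4k)^{-1}(\theta_1+\theta_2)^2$ over $k \in (0,\infty)$, which by the arithmetic--geometric mean inequality equals $(\theta_1+\theta_2)[x(1-x)]^{1/2}$.

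I expect the only genuine obstacle to be this last piece of bookkeeping: one must check that on \emph{both} vertical boundary lines the block structure pairs $T_2^{\mp iy}$ (acting in $\cH_2$) with $T_1^{\pm iy}$ (acting in $\cH_1$), since it is precisely this pairing --- as opposed to two copies of a single operator --- that converts the naive $\max$-type constants into the asserted $N_1 N_2$ and $\theta_1 + \theta_2$. Everything else is a transcription of arguments already carried out in the proofs of Theorem \ref{t4.7} and Corollary \ref{c3.3}.
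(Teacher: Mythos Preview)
Your proposal is correct and follows precisely the route the paper intends: reduce to the single-operator case via the $2\times 2$ block device of Corollary~\ref{c2.9}, invoke Theorem~\ref{t4.7} for closability and $\cB_p$-membership, and then re-run the boundary estimates of \eqref{4.40}--\eqref{4.42} exploiting the off-diagonal block structure to obtain the sharp constants $N_1 N_2$ and $(4k)^{-1}(\theta_1+\theta_2)^2$ rather than the cruder $\max$-type constants. You have in fact spelled out more carefully than the paper does why one cannot simply quote the \emph{statement} of Theorem~\ref{t4.7} for $\mathbf{T}$ --- the paper's proof is a one-line reference to Corollary~\ref{c4.6}, leaving this bookkeeping implicit.
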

%%%%%%%%%%%%%
\begin{proof}
Applying Theorem \ref{t4.7}, one can follow the  proof of
Corollary \ref{c4.6} (see also Corollary \ref{c2.9}) step by step
replacing $\cB(\boldsymbol{\cH})$ and $\cB(\cH_j)$ by
$\cB_p(\boldsymbol{\cH})$ and $\cB_p(\cH_j)$, $j=1,2$,
respectively.
\end{proof}
%%%%%%%%%%%%%

\medskip

%%%%%%%%%%%%%%%%%%%%%%%%%%%%%%%%%%%%%
\noindent {\bf Acknowledgments.} We are indebted to Alexander Gomilko for 
very helpful discussions. We also thank Matthias Lesch for valuable 
correspondence. 

F.G.~is indebted to all organizers of the Herrnhut Symposium, ``Operator Semigroups 
meet Complex Analysis, Harmonic Analysis and Mathematical Physics'' (June 3--7, 2013), 
and particularly, to Wolfgang Arendt, Ralph Chill, and Yuri Tomilov, for fostering an 
extraordinarily stimulating atmosphere during the meeting.
%%%%%%%%%%%%%%%%%%%%%%%%%%%%%%%%%%%%%

%%%%%%%%%%%%%%%%%%%%%%%%%%%%%%%%%%%%%%
%%%%%%%%%%%%%%%%%%%%%%%%%%%%%%%%%%%%%%

\end{document}